\definecolor{darkergreen}{rgb}{0.0, 0.5, 0.0}
\numberwithin{equation}{section}
\newcommand{\be}{\begin{eqnarray}}
\newcommand{\ee}{\end{eqnarray}}
\newcommand{\ce}{\begin{eqnarray*}}
\newcommand{\de}{\end{eqnarray*}}
\newtheorem{theorem}{Theorem}[section]
\newtheorem{meta-theorem}{Meta-Theorem}[section]
\newtheorem{lemma}[theorem]{Lemma}
\newtheorem{remark}[theorem]{Remark}
\newtheorem{definition}[theorem]{Definition}
\newtheorem{proposition}[theorem]{Proposition}
\newtheorem{Examples}[theorem]{Example}
\newtheorem{corollary}[theorem]{Corollary}
\def\Re{{\mathrm{Re}}}
\def\u{\mathbf{u}}
\def\[{{\Big[}}
\def\]{{\Big]}}
\def\<{{\langle}}
\def\>{{\rangle}}
\def\({{\Big(}}
\def\){{\Big)}}
\def\bx{{\mathbf{x}}}
\def\tr{\mathrm {tr}}
\def\dif{{\mathord{{\rm d}}}}
\def\min{{\mathord{{\rm min}}}}
\def\={&\!\!=\!\!&}
\def\YMH{\textnormal{\small \textsc{ymh}}}
 \newcommand{\eqdef}{\stackrel{\mbox{\tiny def}}{=}}
\def\cI{{\mathcal I}}
\def\cP{{\mathcal P}}
\def\cQ{{\mathcal Q}}
\def\cS{{\mathcal S}}
\def\cV{{\mathcal V}}
\def\mB{{\mathbb B}}
\def\mC{{\mathbb C}}
\def\mD{{\mathbb D}}
\def\mE{{\mathbb E}}
\def\mG{{\mathbb G}}
\def\mL{{\mathbb L}}
\def\mM{{\mathbb M}}
\def\mN{{\mathbb N}}
\def\mO{{\mathbb O}}
\def\mR{{\mathbb R}}
\def\mS{{\mathbb S}}
\def\mT{{\mathbb T}}
\def\mX{{\mathbb X}}
\def\mZ{{\mathbb Z}}
\def\1{{\mathbf{1}}}
\def\E{\mathbf E}
\def\geq{\geqslant}
\def\leq{\leqslant}
\def\ge{\geqslant}
\def\le{\leqslant}
\def\ddots{\boldsymbol{:}}
\def\E{\mathbf{E}}
\def\CP{\mathcal{P}}
\def\Tr{\mathrm{Tr}}
\def\bx{{\mathbf{x}}}
\def\tr{\mathrm {tr}}
\def\dif{{\mathord{{\rm d}}}}
\def\min{{\mathord{{\rm min}}}}
\def\tr{{\rm Tr}}
\def\={&\!\!=\!\!&}
\def\bt{\begin{theorem}}
\def\et{\end{theorem}}
\def\bl{\begin{lemma}}
\def\el{\end{lemma}}
\def\br{\begin{remark}}
\def\er{\end{remark}}
\def\bx{\begin{Examples}}
\def\ex{\end{Examples}}
\def\bd{\begin{definition}}
\def\ed{\end{definition}}
\def\bp{\begin{proposition}}
\def\ep{\end{proposition}}
\def\bc{\begin{corollary}}
\def\ec{\end{corollary}}
\def\so{\mathfrak{so}}
\def\su{\mathfrak{su}}
\def\u{\mathfrak{u}}
\def\mfg{\mathfrak{g}}
\def\geq{\geqslant}
\def\leq{\leqslant}
\def\ge{\geqslant}
\def\le{\leqslant}
\def\R{\mathbb R}
\def\C{\mathbb C}
\def\N{\mathbb N}  
\def\<{\langle} \def\>{\rangle}
\def\${|\!|\!|}
\def\section{\@startsection{section}{1}%
  \z@{1.7\linespacing\@plus\linespacing}{.5\linespacing}%
  {\normalfont\scshape\centering}}
\def\subsection{\@startsection{subsection}{2}%
  \z@{1\linespacing\@plus.7\linespacing}{-.5em}%
  {\normalfont\bfseries}}
\tikzset{
colorloop/.style={OliveGreen},
colorline/.style={blue},
}
\tikzset{
dot/.style={circle,fill,inner sep=1pt},
  on each segment/.style={
    decorate,
    decoration={
      show path construction,
      moveto code={},
      lineto code={
        \path [#1]
        (\tikzinputsegmentfirst) -- (\tikzinputsegmentlast);
      },
      curveto code={
        \path [#1] (\tikzinputsegmentfirst)
        .. controls
        (\tikzinputsegmentsupporta) and (\tikzinputsegmentsupportb)
        ..
        (\tikzinputsegmentlast);
      },
      closepath code={
        \path [#1]
        (\tikzinputsegmentfirst) -- (\tikzinputsegmentlast);
      },
    },
  },
  mid arrow/.style={postaction={decorate,decoration={
        markings,
        mark=at position .5 with {\arrow[#1]{stealth}}
      }}},
 midarrow/.style={postaction={on each segment={mid arrow=red}}},
}
\begin{document}

%

\subjclass[2010]{60H15; 35R60; 35Q30}
\keywords{}

\date{\today}

\title{Makeenko--Migdal equations for lattice Yang--Mills--Higgs}

\author{Hao Shen}
\address[H. Shen]{Department of Mathematics, University of Wisconsin - Madison, USA}
\email{pkushenhao@gmail.com}

\author{Scott Andrew Smith}
\address[S. A. Smith]{Academy of Mathematics and Systems Sciences,Chinese Academy of Sciences, Beijing, China
}
\email{ssmith@amss.ac.cn}

\author{Rongchan Zhu}
\address[R. Zhu]{Department of Mathematics, Beijing Institute of Technology, Beijing, China 
}
\email{zhurongchan@126.com}

\dedicatory{(for the 95th birthday of Professor Leonard Gross)}

\maketitle

\begin{abstract}
We derive a form of master loop equations for the lattice Yang--Mills--Higgs theory with structure group $SO(N)$, $U(N)$ or $SU(N)$.  Compared to the pure Yang-Mills setting, several new operations arise.  In fact, to obtain a closed recursion we must broaden the class of observables to include open Wilson lines. Our approach is based on the conditional Langevin dynamic and yields a concise proof via It\^o's formula.
\end{abstract}

\setcounter{tocdepth}{2}
\tableofcontents

\section{Introduction}

In this article we derive 
the Makeenko--Migdal equations for 
the lattice Yang--Mills--Higgs model. 
These are also known as the master loop equations or Dyson--Schwinger equations.

The study of Makeenko--Migdal equations for the ``pure'' Yang--Mills model,
i.e. without coupling to additional fields such as a Higgs field,
goes back to the original paper Makeenko--Migdal in \cite{MM1979}
who heuristically proposed these identities as recursions of Wilson loop observables.
In two dimensions, Kazakov and Kostov \cite{kazakov1980non,kazakov1981wilson} 
and Gopakumar-Gross \cite{gopakumar1995mastering} clarified
that 
one side
of the Makeenko--Migdal  identity may be interpreted as the alternating sum of derivatives of the Wilson loop with respect to the areas.
L\'evy \cite{Levy11} (in 2011) was  the first to provide a rigorous proof of the 2D Makeenko–Migdal equations
and also introduced a more general form of the equations, and his proof is
based on Wilson loop expectation formulas developed in the seminal papers 
on 2D Yang--Mills by  Driver \cite{Driver89} and Gross--King--Sengupta \cite{GKS89}.
 
 Still in 2D, after \cite{Levy11}, alternative derivations were given by Dahlqvist \cite{MR3554890} and Driver--Hall--Kemp \cite{Driver17} (with three proofs)
and Driver \cite{MR3982691}.
\cite{MR3631396} shows that two  proofs in  \cite{Driver17}  also work  over  compact surfaces, based
on  works by Sengupta \cite{Sengupta92,Sengupta97,Sengupta1997}
(see \cite{Fine91,Witten91,Witten1992} for related works, and \cite{Levy03,Levy06,Levy10} for further development and generalizations 
of \cite{Sengupta97} in the geometric settings).
More recently,  on the torus $\mT^2$,
Chevyrev \cite{Chevyrev19YM} constructed a random connection 1-form
whose holonomies along axis paths 
coincide in law with the corresponding observables 
in \cite{Levy03},  see also \cite{CCHS2d,Chevyrev2023} which characterize
the Yang--Mills measure on a suitable quotient space of connection 1-forms on $\mT^2$ as the unique invariant measure of the Yang--Mills dynamic.

In the continuum, rigorous Makeenko--Migdal  type equations are unknown when $d\ge 3$.
However, on the lattice, the model is more amenable, and 
Chatterjee \cite{Cha} first derived a Makeenko--Migdal  equation
which holds in arbitrary dimensions.
This has been reproved, extended, or strengthened in subsequent works
 \cite{Jafar,SSZloop,OmarRon,CPS2023}.
More recently,  \cite{SSZ2d} showed that the lattice Makeenko--Migdal equations
converge to their continuum counterparts as in \cite{Levy11}. 

So far, all the above works are concerned with the pure Yang--Mills model.
In this paper we give the first derivation of the
Makeenko--Migdal equations in the context of  lattice Yang--Mills coupled with a Higgs field, for not only the loop observables, but also observables associated with ``open lines''. Indeed, Wilson loop observables do not form a closed system in the Higgs setting, as some of the operations that arise map Wilson loops into Wilson lines (and vice versa).  Hence, to obtain a closed system of equations one must consider both classes of observables simultaneously.

To introduce the model, 
let  $G$ be a compact Lie group with Lie algebra $\mfg$,
and let $M$ be a fixed finite dimensional manifold 
on which we have a $G$-action that preserves the metric on $M$.
We will focus on the following cases:

(1)  $G=SO(N)$, which acts on  $M\in \{\R^N,\mS^{N-1}\}$ by multiplication,
where $\mS^{N-1} =\{(x_1,\cdots,x_N)\in \R:|x_1|^2+\cdots+|x_N|^2=1\}$  is the unit sphere of dimensions $N-1$ in 
$\R^N$ endowed with the standard Euclidean  inner product.

(2) $G\in \{SU(N),U(N)\}$, which acts on $M\in \{\C^N,\mS^{2N-1}\}$ by multiplication,
where $\mS^{2N-1}=\{(z_1,\cdots,z_N)\in \C:|z_1|^2+\cdots+|z_N|^2=1\}$
is the unit sphere in $\C^N$ endowed with the  Hermitian inner product
$(u,v)=u^* v = \bar u^t v\in \C$ for $u,v\in \C^N$
and recall $|u|^2=u^*u\in \R$.

The matrix groups $G$ are  endowed with the Hilbert--Schmidt inner product.
For any matrix $A$ we write $A^*$ for its conjugate transpose,
and we view every vector as an $N\times 1$ matrix. 
Note that as a topological manifold $\mS^{2N-1}$ in case (2) 
is a sphere of dimensions $2N-1$. Below when writing the notation $\mS^{2N-1}$ it is always understood as the sphere in $\C^N$,
and we do not introduce heavier notation such as $\mS^{N-1}_{\mR}$, $\mS^{2N-1}_{\mC}$.


The lattice Yang--Mills--Higgs (YMH) model on a finite lattice $\Lambda$
is defined by the following action:
\begin{equ}[e:CS]
\mathcal S_{\YMH}(Q,\Phi) 
= 
\beta  \sum_{p\in \CP^+_\Lambda}
 \Re\,\Tr(Q_p)
+\kappa \sum_{e\in E^+_\Lambda }
\Re\, (\Phi_x^* Q_e \Phi_y  )
+ \sum_{z\in \Lambda} V(|\Phi_z|^2)
\end{equ}
where $\beta,\kappa$ are coupling constants,
and $V$ is a polynomial potential.  
The Euclidean quantum lattice YMH model is given by well-defined probability measures 
\begin{equ}[e:LYMH]
	\dif\mu^{\YMH}_{\Lambda}(Q,\Phi)
	= Z_{\Lambda}^{-1}
	\exp\Big(\mathcal S_{\YMH} (Q,\Phi) \Big)
	\prod_{e\in E^+_\Lambda} \dif Q_e
	\prod_{z\in \Lambda} \dif\Phi_z
\end{equ}
where $\dif Q_e$ is the Haar measure on the Lie group $G$, and $Z_{\Lambda}$ is a normalization factor.  To ensure the  measure is well-defined, we assume there exists some $R>0$ and $m> |\kappa|$ such that  $V(|\Phi|^{2}) \leq -m|\Phi|^{2}$ for $|\Phi| \geq R$. In fact, we will assume $V$ is a suitable polynomial, and its explicit form is given in Section \ref{sec:YMHSDEs} below. 
See also \cite{SZZYMH,Cha2024} for the background of the model.
\footnote{Our expression \eqref{e:CS} is the same as \cite{Cha2024}, whereas 
in \cite{SZZYMH} the second term in the action is the square of the discrete covariant
derivative  $Q_e \Phi_y - \Phi_x$. They are equivalent up to a re-definition of $V$ and a rescaling of $\kappa$ by $2$.}

We now specify the meaning of the notation $\Lambda,E_\Lambda^+,\CP_\Lambda^+$ appearing in \eqref{e:CS}.
\begin{enumerate}
\item
$\Lambda=\Lambda_{L}=\mZ^d\cap L\mT^d$
is a finite $d$ dimensional lattice
with side length $L>0$ and unit lattice spacing, and we will consider various functions on it with periodic boundary conditions.
\item
We say that a lattice edge $e=(u(e),v(e))$ of $\mZ^d$ is positively oriented if the beginning point $u(e)$
is smaller in lexographic order than the ending point $v(e)$.
Let $E^+$ (resp. $E^-$) be the set of positively (resp. negatively) oriented edges,
and  denote by $E_{\Lambda}^+$, $E_{\Lambda}^-$ the corresponding subsets
of edges with both beginning and ending points in $\Lambda$.  Define $E\eqdef E^+\cup E^-$. In \eqref{e:CS} we have written $e=(x,y)$.
\item

A {\it plaquette} $p=e_1e_2e_3e_4$ is a lattice loop enclosing a $1\times 1$ square,
which is a concatenation of four oriented edges $e_1,e_2,e_3,e_4$ (of course their cyclic order does not matter, see Section~\ref{sec:Term} for the precise definition of loops).
Let $\cP$ denote the set of plaquettes and let $\cP^+$ be the subset of 
counterclockwise plaquettes (namely the cross product $e_1\times e_2$ is positively oriented).
Also, let $\CP_{\Lambda}$ be the set of plaquettes whose vertices are all in $\Lambda$, and
 $\CP^+_{\Lambda}=\cP^+\cap \CP_{\Lambda}$.
\end{enumerate}

Moreover, in  \eqref{e:CS}, the discrete Yang--Mills field 
$Q=(Q_e : e\in E_{\Lambda}^+)$ is a collection of $G$-matrices.
The first term on the RHS of \eqref{e:CS} is the lattice Yang--Mills proposed by Wilson  \cite{Wilson1974}, where 
 $Q_p \eqdef Q_{e_1}Q_{e_2}Q_{e_3}Q_{e_4}$ for a plaquette $p=e_1e_2e_3e_4$.  Throughout the paper we define $Q_{e}\eqdef Q_{e^{-1}}^{-1}$ for $e \in E^{-}$, where $e^{-1}$ denotes the edge with orientation reversed.
The discrete Higgs field $\Phi=(\Phi_x : x\in \Lambda)$ is a collection of $M$-valued variables.
With all these definitions, the probability measure 
\eqref{e:LYMH} is defined on the configuration space defined by the product manifold
$$
\cQ_L\eqdef G^{E^+_\Lambda}\times M^\Lambda\;.
$$

It is well-known that under suitable scaling and recentering, 
\eqref{e:CS} converges the classical YMH action
as the lattice passes to the continuum
(c.f. \cite[Section~3]{Chatterjee18}, \cite{SZZYMH}.)
Note 
 that there are various other versions of the lattice Yang--Mills actions besides the above Wilson action,
for example Villain's action defined via heat kernels (c.f. \cite[Section~8]{Driver89}, \cite{Levy03,Levy06}),
or Manton's action \cite{Manton80} defined via the Riemannian metric on $G$; see also \cite[Section~2]{Chevyrev2023}.
We will not consider these choices in this paper.

For  any $g:\Lambda \to G$, we define gauge transformations by
\begin{equ}[e:gauge]
Q_e \mapsto g_x Q_e g_y^{-1},
\qquad
\Phi_x \mapsto g_x\Phi_x \;,
\qquad
(e=(x,y))\;.
\end{equ}
The action \eqref{e:CS}, and thus the measure \eqref{e:LYMH},
is invariant under such gauge transformations (c.f. \cite[Lemma~2.1]{SZZYMH}).
Due to gauge invariance of the model,
we are interested in the correlation functions of observables which are gauge invariant.
The class of gauge invariant observables we will consider 
are Wilson loop variables and Wilson line variables. 

A {\it path}  is defined to be a sequence of edges $e_1e_2\cdots e_n$ with $e_i\in E$ and $v(e_i)=u(e_{i+1})$ for $i=1,2,\cdots, n-1$. The path is called closed if $v(e_n)=u(e_1)$.
To define the observables, we  need the notion of 
{\it open lines} which are  paths with ``interior backtracks'' erased,
and {\it loops} which are closed paths modulo cyclic equivalence and also
with ``backtracks'' erased. Postponing the precise definitions of these terminologies to Section~\ref{sec:Term}, we define:

\begin{definition}[Wilson loops]
\label{def:Wloops}
Given a loop $ \ell = e_1 e_2 \cdots e_n$, the Wilson loop variable $W_\ell$
for the YMH field $(Q,\Phi)$ and the loop $\ell $
 is defined as 
$$
W_\ell = \Tr (Q_{e_1}Q_{e_2}\cdots Q_{e_n})\;.
$$
\end{definition}

\begin{definition}[Wilson lines]
\label{def:Wlines}
Given an open line $\ell = e_1 e_2 \cdots e_n$,
the Wilson line variable 
$W_\ell$
for the YMH field $(Q,\Phi)$ and the line $\ell$  is defined as 
$$
W_\ell = \Phi_{u(e_1)}^* Q_{e_1}Q_{e_2}\cdots Q_{e_n}\Phi_{v(e_n)}\;.
$$
Here $u(e),v(e)$ are the beginning and ending points of an edge $e$.
\end{definition}

It is easy to show that the Wilson loop and Wilson line variables
are invariant under gauge transformations \eqref{e:gauge} for any $g$, which is why they are interesting. 
Also, we note that the loop and line variables could be viewed
as ``macroscopic'' versions of the terms
in \eqref{e:CS}: $\tr(Q_p)$ is just a loop variable
where the loop is a single plaquette, 
and the other terms 
on the RHS of \eqref{e:CS} are just line variables 
where the line has one or zero edge.  Note that in the complex case, we are using here that $\text{Re}\text{Tr}Q_{p}=\frac{1}{2} \big (\text{Tr}Q_{p}+\text{Tr}Q_{p^{-1}} \big )$ and $\text{Re}(\Phi_{x}^{*}Q_{e}\Phi_{y} )=\frac{1}{2} \big (\Phi_{x}^{*}Q_{e}\Phi_{y}+\Phi_{y}^{*}Q_{e^{-1}}\Phi_{x} \big )$. 
Also note that $|\Phi_z|^4=(\Phi_z^* \Phi_z)^2$ can be viewed as two Wilson lines of zero length. Note that in Definition~\ref{def:Wlines} of Wilson line variables,
 we  allow the possibility that $u(e_1)=v(e_n)$, and in this situation the Wilson line variable is not a Wilson loop variable, due to the presence of $\Phi$. 
In fact, in this case, the open line $l  = e_1 e_2 \cdots e_n$
is {\it not} considered as a loop, since here $e_1$ is the first edge of $l$,
whereas a loop is defined modulo cyclic equivalence, see Section~\ref{sec:Term} below.
 
We also remark that for a loop or line $\ell$ and its reverse $\ell^{-1}$, 
the corresponding Wilson observables are equal in the real case,
but differ by a complex conjugate in the complex case.
To treat the real and complex cases uniformly,
we always treat loops and lines as directed, i.e. $\ell $ and its reverse $\ell^{-1}$
are two different loops / lines.

Loops and lines will be all called strings. We caution the reader that this differs from
the terminology ``string'' in earlier papers such as \cite{Cha}.
We will consider a collection of strings $s=(\ell_1,\cdots,\ell_k)$
(again see Section~\ref{sec:Term}) and write
\begin{equ}
W_s \eqdef\prod_{i=1}^{k}W_{\ell_{i}},
\qquad
\phi(s)\eqdef\E W_s.
\end{equ}
Our main result is the following Makeenko--Migdal equations. These equations state that the Wilson line and loop correlations $\phi(s)$
satisfy recursive relations. Namely, for a collection $s$ of strings,
$\phi(s)$ is expressed as a linear combination of $\phi(s')$
where $s'$ is obtained by applying certain operations on $s$.
In the following two tables we list these operations, heuristically 
describe what each operation does, define the coefficients
appearing in the linear combination, and defer their precise definitions to various subsections.
The first table lists operations occurring at an edge. 

\begin{center}
\begin{tabular}{c c c c}
\textbf{Operation $\mO$} & \textbf{Transformation}   & $q(\mO)$ & \textbf{Section}\\
\hline
Deformation 
& loop $\Rightarrow$ loop,\; line $\Rightarrow$ line 
& $\beta$
& \ref{sec:Deformation} 
\\
Breaking    
& loop $\Rightarrow$ line,\; line $\Rightarrow$ line+line 
& $\kappa$
& \ref{sec:Breaking} 
\\
Expansion by a plaquette  
& string $\Rightarrow$ string+loop 
& $\beta\nu$
& \ref{sec:Expansion} 
\\
Expansion by an edge  
& string $\Rightarrow$ string+line 
& $\kappa\nu$
& \ref{sec:Expansion} 
\\
Merger      
& loop+loop $\Rightarrow$ loop,\; line+loop $\Rightarrow$ line 
& $2\lambda$ or $2\mu$
& \ref{sec:Merger} 
\\
Switching   
& line+line $\Rightarrow$ line+line 
& $2\lambda$ or $2\mu$
& \ref{sec:Switching} 
\\
Splitting   
& loop $\Rightarrow$ loop+loop,\; line $\Rightarrow$ line+loop 
& $2\lambda$
&\ref{sec:Splitting}  
\\
Twisting 
& loop $\Rightarrow$ loop, \; line $\Rightarrow$ line 
& $2\mu$
&\ref{sec:Twisting}  
\\
\end{tabular}
\end{center}

Here, ``line  $\Rightarrow$ line'' indicates that a line is still a line after deformation,
and  ``line  $\Rightarrow$ line+line'' indicates that a line becomes a pair of lines after breaking, etc. The constants $\lambda,\mu,\nu$ are certain intrinsic constants depending on the choice of $G$, see \eqref{e:lambda-mu-nu}.
For mergers and switchings, they are further divided into two slightly different types of operations, with coefficients being either $2\lambda$ or $2\mu$,
and these are precisely defined in Section~\ref{sec:strings}.
We will write $\mO_e(s)$ for the set of string collections obtained from applying  this operation to a string collection $s$
at the edge $e$. Each $\mO_e(s)$ will further be divided into sets of  ``positive'' and ``negative''
versions of this operation, denoted as $\mO^+_e(s)$ and $\mO^-_e(s)$.

The following table 
 lists operations occurring at a lattice site.
 
 \begin{center}
\begin{tabular}{c c c c}
\textbf{Operation $\mO$} & \textbf{Transformation}   & $q(\mO)$ & \textbf{Section}\\
\hline
Extension   
& line $\Rightarrow$ line 
& $\kappa$
& \ref{sec:Extension} 
\\
Expansion  by an edge
&  line $\Rightarrow$ line+line 
& $-\kappa 1_{\mS}$
& \ref{sec:Expansion} 
\\
Expansion  by a null-line
& line $\Rightarrow$ line+ null-lines 
& $\vec{c} (1-1_{\mS})$
& \ref{sec:Expansion} 
\\
Gluing      
& line $\Rightarrow$ loop,\; line+line $\Rightarrow$ line 
& $2q$
&\ref{sec:Gluing} 
\\
$\R$-Gluing      
& line $\Rightarrow$ loop,\; line+line $\Rightarrow$ line 
& $2(2-q)$
&\ref{sec:Gluing} 
\\
\end{tabular}
\end{center}
Here $1_{\mS}$ is the indicator for $M$ being a sphere, see \eqref{e:1_S},
$q=1$ in the real case and $q=2$ in the complex case,
and $\vec{c} $ is a vector depending on $V$.
We will write $\mO_x(s)$ for the set of string collections obtained from applying  this operation to a string collection $s$
at the site $x$.

\begin{theorem}\label{theo:main}
Let $s$ be a collection of strings.

Fix an edge $e$.
One has
\begin{equs}
C_{G,s,e}\,\phi(s) =\sum_{\mO}  \mp q(\mO) \sum_{s' \in \mO^\pm_{e}(s)}\phi(s'),
\end{equs}
where the sum is over all operations in the first table,
and $C_{G,s,e}$ is a constant depending on $G,s,e$.

Fix  $x\in \Lambda$.
One has
\begin{equs}
C_{M,s,x}\,\phi(s) =\sum_{\mO}   q(\mO) \sum_{s' \in \mO_{x}(s)}\phi(s'),
\end{equs}
where the sum is over all operations in the second table,
and $C_{M,s,x}$ is a constant depending on $M,s,x$.
\end{theorem}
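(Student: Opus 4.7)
The plan is to realize $\phi(s)=\E W_s$ as the expectation of $W_s$ under the invariant measure of a \emph{conditional Langevin dynamic}, and to identify each operation in the two tables as a term obtained by applying It\^o's formula to $W_s$ along this dynamic. Fixing all variables except those at a single edge $e$ (for the first identity) or at a single site $x$ (for the second), we run a reversible diffusion on $G$ (respectively on $M$) whose invariant measure is the conditional distribution of $Q_e$ (resp.\ $\Phi_x$) given the rest; this conditional measure is proportional to the exponential of the restriction of $\mathcal S_{\YMH}$ to the frozen variables. Since the full measure $\mu^{\YMH}_\Lambda$ is stationary, we obtain
\begin{equation*}
\E[\cL_e W_s] = 0, \qquad \E[\cL_x W_s] = 0,
\end{equation*}
where $\cL_e$ and $\cL_x$ are the respective generators. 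The theorem then reduces to a computation of each generator on the Wilson observable $W_s$.

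For the edge identity I would use the SDE $dQ_e = -\nabla_e \mathcal S_{\YMH}\, dt + dB_e$ on $G$, with $B_e$ a Brownian motion on $G$ and $\nabla_e$ computed via an orthonormal basis $\{T^a\}$ of $\mfg$ in the Hilbert--Schmidt metric. The drift is a sum of $\nabla_e \Re\Tr(Q_p)$ (over plaquettes $p\ni e$) and $\nabla_e \Re(\Phi_x^* Q_e \Phi_y)$ (over edges $f\ni e$). After applying It\^o, these contribute the first--order terms of $\cL_e W_s$: differentiating a factor of $Q_e$ in $W_s$ and contracting against a plaquette trace yields the \emph{deformation} and \emph{expansion by a plaquette} operations, while contracting against the Higgs bilinear yields the \emph{breaking} and \emph{expansion by an edge} operations, with coefficients $\beta$, $\beta\nu$, $\kappa$, $\kappa\nu$ respectively. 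The martingale part produces the second--order operator $\tfrac12 \Delta_G W_s$, which by the Casimir identity acts on two insertions of $Q_e$ in $W_s$ (one from each derivative); the resulting contractions are computed by the standard completeness relations on $\mfg$ that introduce the intrinsic constants $\lambda,\mu,\nu$ from \eqref{e:lambda-mu-nu}. If the two insertions lie in the same Wilson factor, they produce \emph{splitting} or \emph{twisting}; if they lie in different factors, they produce \emph{merger} or \emph{switching}. The diagonal term in which both derivatives act on $Q_e$ and $Q_{e^{-1}}$ within the same variable with trivial contraction yields a multiple of $\phi(s)$ itself, and this is precisely the constant $C_{G,s,e}$.

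For the site identity, the conditional Langevin dynamic at $\Phi_x$ is run on $M$, which is either a Euclidean space ($\mR^N$ or $\mC^N$) or a sphere. The drift $\nabla_x \mathcal S_{\YMH}$ splits into the bilinear piece $\kappa \sum_{f\ni x} \nabla_x \Re(\Phi_x^* Q_f \Phi_y)$, which gives the \emph{extension} operation (coefficient $\kappa$) and, through the sphere's tangential projection, the correction $-\kappa 1_\mS$ appearing in the \emph{expansion by an edge} row; and the potential piece $\nabla V(|\Phi_x|^2)$, which along the radial direction produces the \emph{expansion by a null-line} term, weighted by a vector $\vec c$ that records the monomial coefficients of $V$. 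On the sphere the radial part is annihilated, giving the factor $(1-1_\mS)$. The martingale part contributes $\tfrac12 \Delta_M W_s$, and on two insertions of $\Phi_x,\Phi_x^*$ in $W_s$ the completeness relation $\sum_i e_i \otimes e_i^*$ closes up either into a trace (\emph{gluing}, producing a loop from a line, or a line from two lines) or into a real part of a trace (the \emph{$\R$-gluing} coming from the real structure in the $SO(N)$ case and from $\Re$ in the complex case), yielding the coefficients $2q$ and $2(2-q)$ with $q\in\{1,2\}$.

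The main obstacle is the combinatorial bookkeeping: each operation must be matched with the correct contraction pattern, the positive/negative versions $\mO^\pm_e$ must be tracked through the signs produced by left-- versus right--invariant derivatives on $G$ and by the orientation convention on plaquettes and edges, and the case-by-case coefficients $\lambda,\mu,\nu$, $q$ and $1_\mS$ must be identified uniformly across $G\in\{SO(N),U(N),SU(N)\}$ and $M$ ranging over Euclidean spaces and spheres. Once the generator is organized according to whether the two derivatives act on the same or on different Wilson factors, and whether the resulting insertions recombine into loops or lines, the identification with the two tables becomes an elaborate but direct computation, with stationarity providing the final equation $\E[\cL_{\bullet} W_s]=0$.
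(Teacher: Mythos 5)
Your proposal is correct and follows essentially the same route as the paper: conditioning on all variables except $Q_e$ (resp.\ $\Phi_x$), running the conditional Langevin dynamic, and using stationarity together with It\^o's formula (equivalently $\E[\cL_e W_s]=0$, $\E[\cL_x W_s]=0$) so that the drift terms produce deformation/breaking/expansion/extension and the quadratic-variation terms, via the ``magic formulas'' with constants $\lambda,\mu,\nu$ and the analogous sphere identities with $q$, $1_{\mS}$, produce splitting, twisting, merger, switching, gluing and $\R$-gluing, exactly as carried out in Sections~\ref{sec:YMHSDEs}--\ref{sec:Ito} leading to Theorems~\ref{theo:e-final} and~\ref{theo:x-final}. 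The only cosmetic discrepancy is your sign/normalization convention for the dynamic (the paper's measure is $\exp(+\mathcal S_{\YMH})$ with noise $\sqrt2\,\dif\mathfrak B$), which does not affect the argument.
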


After all the operations are precisely defined, we will state the above theorem
more explicitly, see Theorem~\ref{theo:e-final} and Theorem~\ref{theo:x-final}.

\begin{remark}\label{rem:no-N}
In this article  $N$ is fixed. One could also consider large $N$ problems using the 
Makeenko--Migdal equations. In this case, typically $\beta,\kappa$
in \eqref{e:CS}
will depend on $N$.  
For instance the Yang--Mills term will be replaced by 
$N\beta  \Re \sum_{p} \Tr(Q_p)$,
which is the well-known t'Hooft scaling for the coupling $\beta$. 
In such settings, our Theorem~\ref{theo:main} then still holds with a simple change of variables $\beta,\kappa$. 
\end{remark}

Our main tool to prove Theorem~\ref{theo:main} is 
the Langevin dynamics, or stochastic quantization, 
as well as It\^o's formula. 
The dynamical approach was used to derive the Makeenko--Migdal equations
for the pure lattice Yang--Mills model without a Higgs field
\cite{SSZloop} (originally proved by \cite{Cha,Jafar}, see also \cite{CPS2023}).
Following the approach therein, we would consider 
the Langevin dynamics of  the measure \eqref{e:LYMH}
\begin{equ}[e:LSDE]
	\dif (Q,\Phi) = \nabla \mathcal S_\YMH (Q,\Phi) \dif t + \sqrt 2\dif \mathfrak B\;,
\end{equ}
where $\nabla$ is the gradient and $\mathfrak B$ is  the  Brownian motion on 
the product Riemannian manifold $\cQ_L=G^{E^+_\Lambda}\times M^\Lambda$. 
We refer to \cite[Section~2.1]{SZZ22} or \cite{SZZYMH} for detailed review 
on the geometry background.
In this paper we actually consider an improved approach, by studying the  
 Langevin dynamics of the {\it conditional} measure,
fixing all the variables except at one single edge $e$ or lattice site $x$.
Namely, a dynamic given by an stochastic differential equation (SDE for short) on $G$ or $M$, see \eqref{eq:YM1}.
We will give more explicit forms for this equation 
in Section~\ref{sec:YMHSDEs}.

In Section~\ref{sec:strings}, we precisely define 
the operations on strings. In Section~\ref{sec:Ito}, we prove our main
theorem.  There are multiple motivations to derive the 
 Makeenko--Migdal equations for the YMH model.
For lattice pure Yang--Mills,
such equations \cite{Cha,Jafar,SSZloop,OmarRon,CPS2023}
have been used to study (or have close connection with)
gauge-string duality, surface summations, large $N$ properties, area law,
and properties of the model in 2D, see  
\cite{Cha,ChatterjeeJafar,MR3861073,CPS2023,Cao2025area,borga2024surface}.
Hopefully our Makeenko--Migdal equations in Theorem~\ref{theo:main}
will also be useful to study these questions for the YMH model,
for example the question of surface sums where the surfaces have boundaries which arise from the Wilson line observables, or whether there are  transitions between area law and perimeter law depending on the parameters $N,d,\beta,\kappa$. 

Moreover, in the continuum 2D case,  the Makeenko--Migdal equations
for pure Yang--Mills \cite{Levy11,MR3554890,Driver17,MR3982691,PPSY2023}
play a significant role in the study of the model, and they can be derived as 
the continuum limit of the lattice Makeenko--Migdal equations, see \cite{SSZ2d}.
However, in continuum, these equations are not known once another field such as Higgs is coupled in the model, even in 2D. It would be interesting to see if our Theorem~\ref{theo:main} would give some hint on the formulation of such equations
for 2D YMH (note that the scaling limit approach in  \cite{SSZ2d} does not directly apply to YMH here since it relies on the special structures of 2D pure YM).
We leave these questions to future studies.

{\bf Acknowledgement.}
H.S. gratefully acknowledges financial support from an NSF grant (CAREER DMS-2044415).  S.S. and R.Z.  are grateful to the financial supports by National Key R\&D Program of China (No. 2022YFA1006300).
R.Z. is grateful to the financial supports of the NSFC (No. 12271030, No. 12426205) and the financial supports  by the Deutsche Forschungsgemeinschaft (DFG, German Research Foundation) – Project-ID 317210226--SFB 1283.

\section{Yang--Mills--Higgs SDEs}
\label{sec:YMHSDEs}

\subsection{Conditional SDEs}

Recall from \eqref{e:LYMH} that
\begin{equ}[e:LYMH-s2]
	\dif\mu^{\YMH}_{\Lambda}(Q,\Phi)
	= Z_{\Lambda}^{-1}
	\exp\Big(\mathcal S_{\YMH} (Q,\Phi) \Big)
	\prod_{e\in E^+_\Lambda} \dif  Q_e
	\prod_{z\in \Lambda} \dif\Phi_z \;.
\end{equ}
Fix  $e\in E_\Lambda^+$ and $x\in \Lambda$. Consider the regular conditional probability given $(Q_{\backslash e} , \Phi)\eqdef (Q\backslash Q_e,  \Phi) $ 
 \begin{equ}[e:LMHY-com]
 	\mu_{Q_{\backslash e},\Phi}(\dif Q_e)=\frac1{Z_{Q_{\backslash e},\Phi}}\exp(\cS_{\YMH}(Q,\Phi))\,\dif Q_e,
\end{equ}
 with 
 $$Z_{Q_{\backslash e},\Phi}=\int \exp(\cS_{\YMH}(Q,\Phi))\,\dif Q_e,$$
 and similarly, the  regular conditional probability given $(Q, \Phi_{\backslash x})\eqdef (Q,  \Phi\backslash \Phi_x) $ 
 \begin{equ}[e:LMHY-com-1]
 	\mu_{Q,\Phi_{\backslash x}}( \dif \Phi_x)
	=\frac1{Z_{Q,\Phi_{\backslash x}}}\exp(\cS_{\YMH}(Q,\Phi))\,\dif \Phi_x,
\end{equ}
 with 
 $$Z_{Q,\Phi_{\backslash x}}=\int \exp(\cS_{\YMH}(Q,\Phi))\,\dif \Phi_x.$$
 We can then use disintegration to write
 \begin{equ}[mea:dis]
 	 \mu^{\YMH}_\Lambda(\dif Q,\dif \Phi)= \nu_e(\dif Q_{\backslash e},\dif \Phi) \mu_{Q_{\backslash e},\Phi}(\dif Q_e)
= \nu_x(\dif Q,\dif \Phi_{\backslash x}) \mu_{Q,\Phi_{\backslash x}}(\dif \Phi_x)\;,
 \end{equ}
 with $\nu_e$ and $\nu_x$ being marginal laws of $(Q_{\backslash e},\Phi)$ and $(Q,\Phi_{\backslash x})$, respectively. 
We  consider the Langevin dynamics associated with the conditional probabilities in \eqref{e:LMHY-com} and \eqref{e:LMHY-com-1}, which are
the following SDEs on $G$ and $M$
\begin{equs}[eq:YM1]
	\dif Q_e &= \nabla_e \mathcal S_\YMH (Q,\Phi) \dif t + \sqrt 2\dif \mathfrak B_e\;,\\
		\dif \Phi_x &= \nabla_x \mathcal S_\YMH (Q,\Phi) \dif t + \sqrt 2\dif \mathfrak B_x\;.
\end{equs}

We first recall the definitions of the 
  gradients $\nabla_e$, $\nabla_x$  
  and  the  Brownian motions $\mathfrak B_e$ and $\mathfrak B_x$.  
  Given 
$(Q,\Phi)=\{(Q_e,\Phi_x ): e\in E_{\Lambda}^+,x\in \Lambda\}$,
let $\{v^i_e : 1\le i \le \dim(\mfg)\} $ and
$\{v^j_x : 1\le j \le \dim(M)\}$ be an orthogonal basis of the tangent space at $Q_e$ and $\Phi_x$ respectively.
For any function $f \in C^{\infty}(G)$, $g\in C^\infty(M)$ 
the gradient  in \eqref{eq:YM1} is defined as
\begin{equ}[e:def-grad] 
\nabla_{e}f \eqdef \sum_{i=1}^{\dim(\mfg)}(v_e^if)v_e^i,\qquad \nabla_{x}g\eqdef \sum_{j=1}^{\dim(M)}(v_x^jg)v_x^j,
\end{equ}
 where $vf$ is the usual notation in geometry for the derivative of $f$ along a tangent vector $v$.
 
Since $G$ is a Lie group,
every $X\in \mfg$ induces a right-invariant vector field 
on $G$, which is  just given by $XQ$
at each $Q\in G$ since $G$ is a matrix Lie group. 
The inner product on $\mfg$ induces an inner product
on the tangent space at every $Q\in G$
via the right multiplication on $G$.
Hence, for $X,Y\in \mfg$, 
the inner product of 
 $XQ,YQ \in T_{Q}G$ is given by $\Tr((XQ)(YQ)^*) = \Tr(XY^*)$.
This yields a bi-invariant Riemannian metric on $G$.
The (right-invariant) derivative of $f$ along $XQ$ can be calculated by
$ \frac{\dif}{\dif t}|_{t=0} f(e^{tX} Q)$.
We assume that our $\{v^i_e : 1\le i \le \dim(\mfg)\} $ is 
generated by an orthonormal basis of $\mfg$ in this way.

Also, in  \eqref{eq:YM1}, $\mathfrak B_e$ (resp. $\mathfrak B_x$) is a Brownian motion on $G$ (resp. on $M$).
Here we recall the following (c.f. \cite{SSZloop,SZZYMH}):
\begin{enumerate}
\item
Let $\mathfrak{B}$ and $B$  be the Brownian motions on a Lie group $G$ and its Lie algebra $\mfg$ respectively.
One has
$\E \big[  \<B(s),X \> \<B(t),Y \>  \big] = \min(s,t) \< X,Y \>$
for $X,Y \in \mfg$.
They are related by the following SDE
\begin{equ}[e:dB]
	\dif \mathfrak B = \dif B \circ \mathfrak B = \dif B\, \mathfrak B
	+ \frac{c_\mfg}{2} \mathfrak B \dif t,
\end{equ}
where $\circ$ is the Stratonovich product, and $\dif B\, \mathfrak B$ is in the It\^o sense.
Here the constant $c_\mfg$ is determined by
$ \sum_{\alpha} v_\alpha^2  =c_\mfg I_N$ where
$(v_\alpha)_{\alpha=1}^{\dim(\mfg)}$ is an orthonormal basis of $\mfg$, and
\begin{equ}[e:c_g]
	c_{\so(N)} =  -\frac12(N-1),
	\quad
	c_{\u(N)} =  -N,
	\quad
	c_{\su(N)} =  -\frac{N^2-1}{N} .
\end{equ}
For any adapted matrix-valued processes $M,N$ we have (\cite[(4.2)(4.3)]{SSZloop})
\begin{align}
		\dif B M \dif B &=\big (   \, \,\,\nu M  \, \,\,+\mu M^{t}-\lambda \tr M  \big ) \dif t \label{eq:M}. \\
		\text{Tr}(\dif B M)\text{Tr}(\dif B N)&=\Big ( \nu\text{Tr}(M)\text{Tr}(N)+\mu\text{Tr}(MN^{t})-\lambda \text{Tr}(MN) \Big ) \dif t \label{eq:MN}. 
	\end{align}
There are sometimes called ``magic formulas'' \footnote{In comparison to our prior work we make the change of notation $\lambda \to -\lambda$. Also note that $\nu$ denotes a constant whereas $\nu_x,\nu_e$ denote marginal laws in \eqref{mea:dis}.}, where
\begin{equs}[e:lambda-mu-nu]
\lambda=\frac12, \; \mu=\frac12, \; \nu=0 & \qquad (G=SO(N))
\\
\lambda=1, \; \mu=\nu=0 &\qquad (G=U(N))
\\
\lambda=1 ,\; \mu=0,\; \nu=\frac{1}{N} & \qquad (G=SU(N)).
\end{equs}
Due to these values,
when we apply \eqref{eq:M} \eqref{eq:MN}, we 
can assume that the $G$-matrices in the $\mu$ terms are elements of $SO(N)$,
and the $G$-matrices in the $\nu$ terms are elements of $SU(N)$.
\item
Let $\mathfrak{B}$ and $B$
denote the Brownian motions either on $\mS^{N-1}$  and $\mR^N$ respectively,
or, on $\mS^{2N-1}$  and $\mC^N$ respectively.
(In the latter case, $B$ is a collection of $N$ independent standard Brownian motions in $\mC$, each has real and imaginary parts as two independent standard Brownian motions in  $\mR$.)
In both cases, $\mathfrak{B}$ and $B$ are related by the following SDE
\begin{equ}\label{bm:sp}
\dif \mathfrak{B} 
=  \dif B - \mathfrak{B} \Re( \mathfrak{B}^*  \dif B) - \frac12 c_M \mathfrak{B} \dif t
\end{equ}
where we write $c_M$ for $M\in \{\mS^{N-1},\mS^{2N-1}\}$ for the It\^o constant given by
\begin{equ}[e:c_S]
c_{\mS^{N-1}}  = N-1,\qquad c_{\mS^{2N-1}}  = 2N-1\;.
\end{equ}
Here, as usual, $\Re$ does nothing in the real case.
Geometrically,
$u \Re(u^* v)$ is the projection of a vector $v$ into the direction of a vector $u$,
so $\dif B - \mathfrak{B} \Re( \mathfrak{B}^*  \dif B)$ is the projection of $\dif B$ to the tangent plane of the sphere at $ \mathfrak{B}$.
\end{enumerate}

For $e\in E^+$, we will write $B_e$ for the $\mfg$  valued Brownian motion associated with the $G$ valued Brownian motion $\mathfrak B_e$.

For $x\in \Lambda$,
we will write $B_x$ for the $\R^N$ or $\C^N$ valued Brownian motion 
and  $\mathfrak B_x$ the corresponding sphere valued Brownian motion.

\subsection{Explicit form of the SDEs}

Recall  from \eqref{e:CS} that $\mathcal S_{\YMH} = \cS_1+\cS_2+\cV$ where
\begin{equ}[e:SSV]
	\cS_1\eqdef \beta  \sum_{p\in \CP^+_\Lambda} \Re\,\Tr(Q_p),
	\qquad
	\cS_2\eqdef \kappa \sum_{e\in E^+_\Lambda }\Re (\Phi_x^*Q_e \Phi_y),
	\qquad
	\cV=\sum_{z\in \Lambda} V(|\Phi_z|^2)
\end{equ}
with $e=(x,y)$.

We now give an explicit form of the SDEs \eqref{eq:YM1}.
As in \cite{SSZloop}, for a plaquette $p$ and an oriented edge $e$,
 we write $p\succ e$ to indicate that one of the four oriented edges in $p$
 is $e$.
Note that for each edge $e$, there are $2(d-1)$ plaquettes in $\CP$
such that $p\succ e$.

For $e\in E_{\Lambda}^+$,  we have
\begin{equs}[SDE]
	\dif Q_e
	&=
	\nabla_e \cS_1 \,\dif t
	+ \nabla_e \cS_2 \,\dif t
	+\sqrt 2\, \dif \mathfrak B_e\;,
\end{equs}
where by It\^o's formula 
$$
\sqrt 2 \dif \mathfrak B_e = c_\mfg Q_e\dif t+\sqrt 2\dif B_eQ_e
$$
and as calculated  in  \cite{SSZloop} and \cite[Section~3]{SZZYMH}, 
\footnote{Note that our $\kappa$ in \eqref{e:CS} corresponds to $2\kappa$ in \cite{SZZYMH}.}
\begin{equs}[e:grad_e]
	\nabla_e\cS_1
	&=\begin{cases}
		\displaystyle 
		-\frac{\beta}2 \sum_{p\in \cP_\Lambda,p\succ e}(Q_p-Q_p^*)Q_e\;,
		&\qquad
		\mfg \in  \{\so(N),\u(N)\}\;,
		\\
		\displaystyle 
		- \frac{\beta}2 \sum_{p\in \cP_\Lambda ,p\succ e}
		\Big( (Q_p-{Q}_p^{*}) - \frac{1}{N}\tr(Q_p-{Q}_p^{*}) I_N\Big)   Q_e\;,
		&\qquad
		\mfg \in \su(N)\;.
	\end{cases}
	\\
	\nabla_e\cS_2
	&=\begin{cases}
		\displaystyle 
		\frac{\kappa}2 (\Phi_x\Phi_y^*-Q_e\Phi_y\Phi_x^* Q_e)\;, 
	&	\qquad	
			\mfg \in  \{\so(N),\u(N)\}\;,
		\\
		\displaystyle 
		\frac\kappa2\Big(\Phi_x\Phi_y^*Q_e^*-Q_e\Phi_y\Phi_x^* - \frac{1}{N}\tr(\Phi_x\Phi_y^*Q_e^*-Q_e\Phi_y\Phi_x^*) I_N\Big)   Q_e\;,
		&	\qquad
		\mfg \in \su(N)\;,
		\end{cases}
\end{equs}
for $e=(x,y)$.

For each $x\in \Lambda$,
we have
\begin{equs}[SDE1]
	\dif \Phi_x
	&=
	 \nabla_x \cS_2\, \dif t
	 + \nabla_x \cV \,\dif t
	+\sqrt 2\,\dif \mathfrak B_x\;,
\end{equs}
where for $M=\mR^N$ or $M=\mC^N$,
\begin{equ}
\sqrt 2\dif \mathfrak B_x
=\sqrt 2 \dif B_x	
\end{equ}
and 
\begin{equ}[e:grad_x]
\nabla_x \cS_2
=
\kappa \!\!\!\! 
\sum_{e=(x,y)\in E_\Lambda}  
	\!\!\!\! Q_e\Phi_{y} ,
\qquad\qquad
\nabla_x \cV
=
\sum_{k=0}^n c_k \Phi_x |\Phi_x|^k
\end{equ}
for some coefficients $c_0,\cdots,c_n$ with $c_n<0$ and $n=\deg(V)-1$.
For $M$ being the spheres, by It\^o's formula,
\begin{equ}[e:frakB2]
\sqrt 2\dif \mathfrak B_x
=
\sqrt 2(\dif B_x-\Phi_x\Re(\Phi_x^*\dif B_x)) -c_M\Phi_x\dif t,
\end{equ}
with $c_M$ as in \eqref{e:c_S} and again as in  \cite[Section~3]{SZZYMH}, 
\begin{equ}[e:grad_xS]
	\nabla_x \cS_2
=
	\kappa \sum_{e=(x,y)\in E_\Lambda}\Big(Q_e\Phi_{y}
	- \Re (\Phi_x^* Q_e \Phi_y)  \Phi_x\Big) ,
\qquad\qquad
\nabla_x \cV
=0 .
\end{equ}

As proved in \cite{SZZYMH}, the above SDEs \eqref{SDE} and \eqref{SDE1} are globally well-posed  in the state space $G$ and  $M$. In fact, it is even easier here since for the case that $(Q_{\backslash e} , \Phi)$ is fixed, the coefficients of the SDE \eqref{SDE} are Lipschitz. For fixed $(Q , \Phi_{\backslash x})$, we have dissipation from $c_n\Phi_x|\Phi_x|^n$ with $c_n<0$, which implies the global well-posedness of the SDE \eqref{SDE1} by standard arguments (see e.g. \cite[Chapter~3]{MR3410409}).  Also, the conditional probability measure \eqref{e:LMHY-com} is invariant under the SDE \eqref{SDE}, while \eqref{e:LMHY-com-1} is an invariant measure for the SDE \eqref{SDE1} 
(in the case $M=\R^N$ and $V(|\Phi|^2)=-m|\Phi|^2$ we need an additional assumption $m>|\kappa|$, see \cite{SZZYMH}; otherwise $c_n<0$ as above suffices).

Recall that in \eqref{e:def-grad}, $\nabla_e$ is only defined for $e\in E^+$.
Now for $e\in E^-$, we {\it define by hand}  $\nabla_e \cS_i$ ($i=1,2$) to be the same 
expressions on the RHS of \eqref{e:grad_e}. 
The motivation of this definition is that 
 for $e\in E^-$, the variable $Q_e=Q_{e^{-1}}^{-1}$ also satisfies a dynamic,
 and the following lemma shows that its dynamic behaves in the same way
``as if $e\in E^+$''. 
This will be convenient later since we will not have to discuss whether 
an edge in a string is in $E^+$ or $E^-$:
the drift will have the same form in either case, 
and for the quadratic variation of martingales we only care about if a pair of edges is the same or opposite.
(Also, just like \eqref{eq:M}\eqref{eq:MN}, the following identities may have independent interest so we put them in a lemma.)
\begin{lemma}\label{lem:DM}
For all $e\in E_\Lambda=E_\Lambda^+\cup E_\Lambda^-$, 
we have
$\dif Q_e = \mathcal D_e \,\dif t + \dif \mathcal M_e$ where

(1)
 the drift 
$\mathcal D_e= c_\mfg Q_e +\nabla_e \cS_1+\nabla_e \cS_2$;

(2)
the martingale $\mathcal M_e$ satisfies
\begin{equs}
\text{Tr}\big(\dif \mathcal M_e \,P\, \dif \mathcal M_e \,R \big)
&=
2\Big(
\nu \text{Tr} (Q_e PQ_e R)
+\mu \text{Tr} ( P^t  R)-\lambda\text{Tr}(Q_eP)\text{Tr}(Q_eR) \Big)\dif t	\label{e:martingale1}
\\
\text{Tr}\big(\dif \mathcal M_e \,P\, \dif \mathcal M_{e^{-1}} \,R \big)
&=
-2\Big(
\nu \text{Tr} (Q_e PQ_e^* R)
+\mu \text{Tr} (Q_e P^t Q_e^t R)-\lambda\text{Tr}(P)\text{Tr}(R) 
\Big)\dif t						\label{e:martingale2}
\\
\Tr \big(\dif \mathcal M_e \,P\big) 
\Tr\big(\dif \mathcal M_e \,R \big)
&=
2\Big(\nu \Tr (Q_e P)\Tr(Q_e R)
+\mu \text{Tr} ( P R^t)-\lambda\text{Tr}(Q_ePQ_eR) \Big)\dif t		\label{e:martingale3}
\\
\text{Tr}\big(\dif \mathcal M_e \,P\big) \Tr\big( \dif \mathcal M_{e^{-1}} \,R \big)
&=-2
\Big(
\nu \text{Tr} (Q_e P) \Tr( RQ_e^*)
+\mu \text{Tr} (Q_e P Q_e R^t)-\lambda\text{Tr}(PR) 
\Big)\dif t						\label{e:martingale4}
\end{equs}
for any adapted $N$-by-$N$ matrix-valued processes $P,R$,
where $\lambda,\mu,\nu$ are as in \eqref{e:lambda-mu-nu}.

\end{lemma}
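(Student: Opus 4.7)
The plan is to treat $e\in E_\Lambda^+$ and $e\in E_\Lambda^-$ separately, deriving the It\^o decomposition of $Q_e$ in each case, and then extracting the four quadratic-variation identities \eqref{e:martingale1}--\eqref{e:martingale4} by algebraic substitution into the magic formulas \eqref{eq:M}--\eqref{eq:MN}. For $e\in E_\Lambda^+$, the decomposition is read off directly from \eqref{SDE}: writing $\sqrt 2\,\dif\mathfrak B_e = c_\mfg Q_e\,\dif t + \sqrt 2\,\dif B_e\,Q_e$ as in \eqref{e:dB} identifies $\mathcal D_e = c_\mfg Q_e + \nabla_e\cS_1 + \nabla_e\cS_2$ and $\dif\mathcal M_e = \sqrt 2\,\dif B_e\,Q_e$. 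Plugging this form of $\dif\mathcal M_e$ into \eqref{eq:M} with the inner matrix $Q_e P$ (wrapped in a trace against $Q_e R$) produces \eqref{e:martingale1}, and plugging into \eqref{eq:MN} with $Q_e P$ and $Q_e R$ produces \eqref{e:martingale3}; the $\mu$-terms in both are tidied using $Q_e^t Q_e = I$, which is legal because $\mu\ne 0$ only for $G = SO(N)$.

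For $e\in E_\Lambda^-$, set $\tilde e \eqdef e^{-1}\in E_\Lambda^+$ and use $Q_e = Q_{\tilde e}^*$. Taking conjugate transpose of the SDE for $Q_{\tilde e}$, permissible since $(\cdot)^*$ is $\mathbb R$-linear on matrix entries, yields $\dif Q_e = \big(c_\mfg Q_e + (\nabla_{\tilde e}\cS_1 + \nabla_{\tilde e}\cS_2)^*\big)\,\dif t - \sqrt 2\,Q_e\,\dif B_{\tilde e}$, where the sign comes from $(\dif B_{\tilde e})^* = -\dif B_{\tilde e}$ (since $\mfg$ consists of skew-symmetric or skew-Hermitian matrices). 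To match this with part (1) of the lemma we must verify $(\nabla_{\tilde e}\cS_i)^* = \nabla_e\cS_i$ in the by-hand sense of \eqref{e:grad_e}. For $\cS_2$ this is direct from $(AB)^* = B^*A^*$ together with the swap $u(\tilde e) = v(e)$, $v(\tilde e) = u(e)$. For $\cS_1$ the identification proceeds by the bijection $p\leftrightarrow p^{-1}$ between plaquettes with $p\succ \tilde e$ and those with $p\succ e$, combined with the observation that the natural plaquette-matrix representative of $p^{-1}$ starting at $e$ is a conjugate of the representative of $p$ starting at $\tilde e$ by $Q_e$; after invoking $Q_e Q_{\tilde e} = I$ and $Q_p^* = Q_{p^{-1}}$, everything collapses onto the by-hand expression. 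The cross-covariations \eqref{e:martingale2} and \eqref{e:martingale4} then follow by plugging $\dif\mathcal M_e = -\sqrt 2\,Q_e\,\dif B_{\tilde e}$ and $\dif\mathcal M_{\tilde e} = \sqrt 2\,\dif B_{\tilde e}\,Q_{\tilde e}$ into the trace and applying \eqref{eq:M}/\eqref{eq:MN} to the inner $\dif B_{\tilde e}\cdots\dif B_{\tilde e}$ contraction, with $Q_e Q_{\tilde e} = I$ producing the clean form and $Q_e^t = Q_e^{-1}$ handling the $SO(N)$-only $\mu$-terms.

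The main obstacle will be this drift-matching step for $e\in E_\Lambda^-$: naively $(\nabla_{\tilde e}\cS_1)^*$ places $Q_e$ on the \emph{left}, whereas the by-hand expression in \eqref{e:grad_e} places it on the \emph{right}, and reconciling the two genuinely requires the cyclic-representative convention for $Q_p$ together with the orientation-reversal bijection $p\mapsto p^{-1}$. A secondary subtlety is the $SU(N)$ case of \eqref{e:grad_e}, where the trace-projection $-\tfrac{1}{N}\tr(\cdot)I_N$ must be tracked through the adjoint; this is painless since $I_N^* = I_N$ and $\tr(A)^* = \tr(A^*)$, but it doubles the number of terms to check.
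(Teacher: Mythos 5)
Your proposal is correct and follows essentially the same route as the paper: identify $\dif\mathcal M_e=\sqrt 2\,\dif B_e Q_e$ for $e\in E_\Lambda^+$, obtain the $E_\Lambda^-$ case by taking adjoints (using skew-symmetry/skew-Hermitianity of $\mfg$ to get $\dif\mathcal M_e=-\sqrt 2\,Q_e\dif B_{e^{-1}}$), verify $(\nabla_{e^{-1}}\cS_i)^*=\nabla_e\cS_i$, and substitute into the magic formulas \eqref{eq:M}--\eqref{eq:MN}, simplifying the $\mu$-terms via $Q_e^tQ_e=I$ since $\mu\neq 0$ only for $SO(N)$. The only differences are cosmetic: you prove the adjoint identity for $\nabla_e\cS_1$ via the plaquette-reversal bijection where the paper cites \cite[(4.15)]{SSZloop}, and you should make explicit (as the paper does) that each of the four covariation identities must be checked for both orientations of $e$, though in each case the second orientation reduces to the first by swapping $P$ and $R$ or by cyclic invariance of the trace.
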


\begin{proof}
(1)
For $e\in E_\Lambda^+$ the statement is already in \eqref{e:grad_e}.
Consider $e\in E_\Lambda^-$.
We have
$\dif Q_e  = \dif Q_{e^{-1}}^*$ so we need to show 
$$
\mathcal D_{e^{-1}}^{*}=(c_\mfg Q_{e^{-1}} +\nabla_{e^{-1}} \cS_1+\nabla_{e^{-1}} \cS_2)^*=\mathcal D_e
$$
where $\mathcal D_e$ is given in the statement of the lemma.
By definition, $c_\mfg  Q_{e^{-1}}^*=c_\mfg  Q_e$.
It was also shown in 
 \cite[(4.15)]{SSZloop} that 
$(\nabla_{e^{-1}} \mathcal{S}_1)^{*}
= \nabla_{e} \mathcal{S}_1$ where $\nabla_{e} \mathcal{S}_1$ was defined 
to be \eqref{e:grad_e}.
Finally,
 for $e=(x,y) \in E^{-}$  so that   $e^{-1}=(y,x) \in E^{+}$,
$$
(\nabla_{e^{-1}}\mathcal{S}_2)^{*}=
\frac{\kappa}2(
\Phi_y\Phi_x^*
-Q_{e^{-1}} \Phi_x\Phi_y^* Q_{e^{-1}})^*
=
\frac{\kappa}2(
\Phi_x\Phi_y^*
-Q_{e} \Phi_y\Phi_x^* Q_{e}),\qquad G=SO(U), U(N)
$$
which is precisely the RHS of \eqref{e:grad_e}. A similar calculation holds for $G=SU(N)$. 

(2) Note in advance that \eqref{eq:M} implies that for adapted matrices $M$,$N$ it holds
\begin{equation}
\tr(N\dif B M \dif B) =\big (  \nu \tr(NM)  \, \,\,+\mu \text{Tr}(N^{t}M )-  \lambda \tr N\tr M  \big ) \dif t \label{eq:M2}.
\end{equation}
For $e\in E_\Lambda^+$,
$\dif\mathcal M_e = \sqrt 2\dif B_eQ_e$.
For $e\in E_\Lambda^-$,
we have
$\dif Q_e  = \dif Q_{e^{-1}}^*$
so 
\begin{equ}[e:dMe]
\dif\mathcal M_e = \sqrt 2 Q_{e^{-1}}^* \dif B_{e^{-1}}^*
=-\sqrt 2Q_{e} \dif  B_{e^{-1}}.
\end{equ}
For  $e\in E_\Lambda^+$, 
\begin{equ}
\text{Tr}\big(\dif \mathcal M_e \,P\, \dif \mathcal M_e \,R \big)
=2 \text{Tr}\big(\dif B_eQ_e \,P\, \dif B_eQ_e \,R \big)
\stackrel{\eqref{eq:M2}}{=} 
\mbox{RHS of \eqref{e:martingale1}}
\end{equ}
and for  $e\in E_\Lambda^-$, the LHS is equal to
$2 \text{Tr}\big(Q_e\dif B_{e^{-1}} \,P\, Q_e \dif B_{e^{-1}}  \,R \big)$
which yields exactly the same result \eqref{e:martingale1}, since the result is invariant under  $Q_e P\mapsto PQ_e$ and $Q_e R\mapsto R Q_e$,
or directly verified by using \eqref{eq:M2}.

Moreover, 
for  $e\in E_\Lambda^+$,
\begin{equ}
\text{Tr}\big(\dif \mathcal M_e \,P\, \dif \mathcal M_{e^{-1}} \,R \big)
\stackrel{\eqref{e:dMe}}{=} 
- 2 \text{Tr}\big(\dif B_eQ_e \,P\, Q_e^* \dif B_e \,R \big)
\stackrel{\eqref{eq:M}}{=} 
 \mbox{RHS of \eqref{e:martingale2}}
\end{equ}
and for  $e\in E_\Lambda^-$, the LHS is equal to
$- 2 \text{Tr}\big(Q_e \dif B_{e^{-1}}  \,P\, \dif B_{e^{-1}} Q_{e^{-1}} \,R \big)$
which also gives  \eqref{e:martingale2}, since this is just the previous expression with $P,R$ swapped, or directly verified by using \eqref{eq:M}.

Next, for  $e\in E_\Lambda^+$, 
\begin{equ}
\Tr \big(\dif \mathcal M_e \,P\big) 
\Tr\big(\dif \mathcal M_e \,R \big)
=
 2 \Tr\big(\dif B_eQ_e \,P\big)\Tr\big(  \dif B_e Q_e \,R \big)
\stackrel{\eqref{eq:MN}}{=} 
 \mbox{RHS of \eqref{e:martingale3}}
\end{equ}
and for  $e\in E_\Lambda^-$, the LHS is equal to
$2 \text{Tr}\big(Q_e \dif B_{e^{-1}}  \,P\big) \Tr\big(Q_e \dif B_{e^{-1}}  \,R \big)$
which also gives  \eqref{e:martingale3},
since the result is invariant under  $Q_e P\mapsto PQ_e$ and $Q_e R\mapsto R Q_e$,
or directly verified by using \eqref{eq:MN}.

Finally, for  $e\in E_\Lambda^+$, 
$$
\text{Tr}\big(\dif \mathcal M_e \,P\big) \Tr\big( \dif \mathcal M_{e^{-1}} \,R \big)
\stackrel{\eqref{e:dMe}}{=} 
- 2 \text{Tr}\big(\dif B_eQ_e \,P\big) \Tr\big(Q_e^* \dif B_e \,R \big)
\stackrel{\eqref{eq:MN}}{=} 
 \mbox{RHS of \eqref{e:martingale4}}
$$
and for  $e\in E_\Lambda^-$, the LHS 
 also gives  \eqref{e:martingale4},
since it is just the previous equation with the roles of $P,R$ swapped.
\end{proof}
In the sequel, we introduce the indicator notation for $M$ being a sphere
\begin{equ}[e:1_S]
\1_{\mS}=\1_{M=\mS^{N-1}}+\1_{M=\mS^{2N-1}}.
\end{equ}
In parallel with Lemma~\ref{lem:DM} we have: 

\begin{lemma}\label{lem:DMx}
For all $x\in \Lambda$, 
we have
$\dif \Phi_x = \mathcal D_x \,\dif t + \dif \mathcal M_x$ where

(1)
 the drift 
$\mathcal D_x= -c_M \Phi_x +\nabla_x \cV+\nabla_x \cS_2$ where $c_M$ is as in \eqref{e:c_S}
and $c_M=0$ if $M\in \{\mR^N,\mC^N\}$;

(2)
the martingale $\mathcal M_x$ satisfies 
\begin{equ}[e:martin-x1]
\dif \mathcal M_x^* P \dif \mathcal M_x  
= \Big(
2q\tr (P) - \1_{\mS} \,2  \Phi_x^* P  \Phi_x 
\Big)\dif t 
\end{equ}
and
\begin{equs}[e:martin-x2]
 \dif \mathcal M_x R^* \dif \mathcal M_x 
 &= \Big(
 2(2-q) R -\1_{\mS} 2 \Phi_x R^* \Phi_x \Big) \dif t 
 \\
 \dif \mathcal M_x^* R \dif \mathcal M_x^* 
& = \Big(
2(2-q) R^*  - \1_{\mS} 2 \Phi_x^* R \Phi_x^*\Big)\dif t
\end{equs}
for any adapted $N$-by-$N$ matrix-valued process $P$ and $N$-by-$1$ matrix-valued (i.e. vector-valued) $R$,
where $q=1$ in the real case and $q=2$ in the complex case.
\end{lemma}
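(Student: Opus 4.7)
The plan is to mimic the proof of Lemma~\ref{lem:DM}: read off the drift and martingale parts directly from the SDE \eqref{SDE1}, then reduce the martingale identities to elementary It\^o calculus for the ambient Brownian motion $B_x$ on $\R^N$ or $\C^N$. Case (1) is a direct bookkeeping step: substituting \eqref{e:grad_x} and $\sqrt 2\,\dif\mathfrak B_x=\sqrt 2\,\dif B_x$ into \eqref{SDE1} for flat $M$ and \eqref{e:grad_xS} together with \eqref{e:frakB2} for $M\in\{\mS^{N-1},\mS^{2N-1}\}$. In the flat case, $\nabla_x\cV$ is present and $c_M=0$ by convention, while in the sphere case $\nabla_x\cV=0$ and the $-c_M\Phi_x\,\dif t$ correction arises from the It\^o term in \eqref{e:frakB2}. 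In both cases the drift collects exactly as $\mathcal D_x=-c_M\Phi_x+\nabla_x\cV+\nabla_x\cS_2$.

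For part (2) I will first establish the three base It\^o identities
\begin{equation*}
\dif B_x^* P\,\dif B_x=q\,\tr(P)\,\dif t,\qquad
\dif B_x R^*\,\dif B_x=(2-q)R\,\dif t,\qquad
\dif B_x^* R\,\dif B_x^*=(2-q)R^*\,\dif t,
\end{equation*}
which follow from the coordinate-level rules $\dif B_x^i\dif B_x^j=\delta_{ij}\dif t$ in the real case ($q=1$) and $\dif B_x^i\dif\bar B_x^j=2\delta_{ij}\dif t$, $\dif B_x^i\dif B_x^j=0$ in the complex case ($q=2$). In the flat case $\mathcal M_x=\sqrt 2\,B_x$ and $\1_{\mS}=0$, so multiplying the three base identities by $2$ yields \eqref{e:martin-x1}--\eqref{e:martin-x2} immediately.

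In the sphere case I will write $\dif\mathcal M_x=\sqrt 2\,(\dif B_x-\Phi_x\alpha)$ with $\alpha:=\Re(\Phi_x^*\dif B_x)$ and expand each quadratic variation into four pieces. The leading piece reproduces the $2q\tr(P)$ or $2(2-q)R$ contribution via the base identities, and the three remaining pieces together contribute the $-\1_{\mS}\cdot 2\Phi_x^* P\Phi_x$ (resp.~$-2\Phi_x R^*\Phi_x$, $-2\Phi_x^* R\Phi_x^*$) tangential correction. The main bookkeeping input is that $|\Phi_x|^2=1$ on the sphere, and the identity $\alpha^2=\dif t$ obtained by expanding $\alpha$ symmetrically.

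The only nontrivial subtlety is the $\Re$ in $\alpha$ for the complex sphere: one writes $\alpha=\tfrac12(\Phi_x^*\dif B_x+\dif B_x^*\Phi_x)$ and uses that the holomorphic-holomorphic contractions vanish ($\dif B_x\dif B_x=0$) while the mixed contractions give $\dif B_x\dif B_x^*=2I\,\dif t$. This lets one verify that each of the three non-leading pieces evaluates to the same sphere-projection correction with the right sign, so that the net result matches \eqref{e:martin-x1}--\eqref{e:martin-x2} uniformly in real/complex. The calculation for $\dif\mathcal M_x R^*\dif\mathcal M_x$ proceeds in parallel, and $\dif\mathcal M_x^* R\,\dif\mathcal M_x^*$ follows by taking adjoints; the main obstacle is thus purely notational bookkeeping for these cross terms, with no structural difficulty beyond what is already handled in Lemma~\ref{lem:DM}.
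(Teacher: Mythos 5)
Your proposal is correct and follows essentially the paper's own route: read off the drift from \eqref{SDE1}, \eqref{e:grad_x}--\eqref{e:frakB2}, then expand $\dif\mathcal M_x=\sqrt 2(\dif B_x-\1_{\mS}\Phi_x\Re(\Phi_x^*\dif B_x))$ and evaluate the quadratic variations with the coordinatewise It\^o rules ($\overline{\dif B^i}\dif B^i=q\,\dif t$, holomorphic--holomorphic contractions vanishing) and $|\Phi_x|^2=1$, obtaining one identity of \eqref{e:martin-x2} from the other by taking adjoints. One small phrasing caveat: the three non-leading pieces are not each equal to the correction ``with the right sign''---the two cross terms each give $-\Phi_x^*P\Phi_x\,\dif t$ while the $\alpha^2$ term gives $+\Phi_x^*P\Phi_x\,\dif t$---but their sum is $-\Phi_x^*P\Phi_x\,\dif t$, which after the overall factor $2$ yields exactly the $-\1_{\mS}\,2\Phi_x^*P\Phi_x$ term, so your net accounting is right.
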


\begin{proof}
(1)
This follows from \eqref{SDE1} and \eqref{e:frakB2}.

(2) We drop the subscript $x$ to simplify the notion, and write $B^i$, $P^{ik}$ etc. where $i,k\in\{1,\cdots,N\}$ for coordinates of vectors and matrices.  Note that
\begin{equs}
\dif \mathcal M^{*}P \dif \mathcal M
&=
2\sum_{i,k}
\Big(\dif \overline{B^i}
	-\1_{\mS}\overline{\Phi^i} \sum_j\Re(\overline{\Phi^j}\dif B^j)\Big)
	P^{ik}
\Big(\dif {B^k}- \1_{\mS}\Phi^k \sum_l \Re(\overline{\Phi^l}\dif B^l)\Big)
\\
&= 2\sum_i \dif \overline{B^i} P^{ii} \dif B^i
+ \1_{\mS}2\sum_{i,j,k} \overline{\Phi^i} \Re(\overline{\Phi^j}\dif B^j) P^{ik}
\Phi^k \Re(\overline{\Phi^j}\dif B^j)
\\
&\quad
-\1_{\mS}2\sum_{i,k} \dif \overline{B^i} P^{ik} \Phi^k \Re(\overline{\Phi^i} \dif B^i) 
-\1_{\mS}2\sum_{i,k}  \overline{\Phi^i} \Re(\overline{\Phi^k} \dif B^k) P^{ik} \dif B^k
\\&
=2q \tr(P) -\1_{\mS}2 \Phi^*P \Phi.
\end{equs}
Here recall again that the conjugate $\overline{(\cdot)}$ does nothing in the real case.
From the first line to the second line we used independence of Brownian coordinates.
For each $\Re(\cdots)$ we write for instance 
$ \Re(\overline{\Phi^k} \dif B^k) = \frac12  (\overline{\Phi^k} \dif B^k+ \Phi^k \dif \overline{B^k})$
and then we used $\overline{\dif B^i} \dif B^i = q\dif t$.  Recall
that in the complex case we have $\dif B^i \dif B^i = \dif \overline{ B^i} \dif \overline{B^i} = 0$.
For the term with $\sum_{i,j,k}$
we used $\sum_{j} \overline{\Phi^j} \Phi^j=1$.

Moreover,  we prove the second identity in \eqref{e:martin-x2} and the first one follows follows from the second identity by writing
\begin{equation}
	\dif \mathcal{M}_{x}R^{*} \dif \mathcal{M}_{x}=(\dif \mathcal{M}_{x}^{*}R \dif \mathcal{M}_{x}^{*} )^{*} \nonumber.
\end{equation}
	For the second identity, 
\begin{align*}
	(\dif \mathcal M_{x}^{*}R\dif \mathcal M_{x}^{*} )^{k}=2\sum_{j}(\overline{\dif B^{j}}-\overline{\Phi^{j}}\sum_{\ell}\text{Re}(\overline{\Phi^{\ell}}\dif B^{\ell} ) )R^{j}(\overline{\dif B^{k}}-\overline{\Phi^{k}}\sum_{m}\text{Re}(\overline{\Phi^{m}}\dif B^{m} ) )
\end{align*}
One term gives
\begin{equation*}
	\sum_{j}\overline{\dif B^{j}}R^{j}\overline{\dif B^{k}}=
	\begin{cases}
		R^{k} \qquad \text{real case}\\
		0 \qquad \text{complex case}
	\end{cases}
\end{equation*}
In the sphere case, two of the terms give
\begin{equation*}
	-2\sum_{j,m}\overline{\dif B^{j}}\overline{\Phi^{k}}\text{Re}(\overline{\Phi^{m}}\dif B^{m} )R^{j}=q \sum_{j} \overline{\Phi^{k}}\overline{\Phi^{j}}R^{j}=q(\Phi_{x}^{*}R \Phi_{x}^{*})^{k}
\end{equation*}
can be calculated in the same way as above
which gives \eqref{e:martin-x2}.
\end{proof}

\begin{remark}\label{rem:2}
Our ``dynamical proof'' will  follow similar idea as in \cite{SSZloop},
but with a few differences. First, we use the dynamic for the conditional measure here,
which will yield a stronger version of the loop equation.
Second, we use a dynamic of the form $\dif X=\nabla\cS \dif t + \sqrt 2 \dif \mathfrak{B}$
here whereas \cite{SSZloop} uses a dynamic of the form $\dif X=\frac12 \nabla\cS \dif t +  \dif \mathfrak{B}$; this is just a matter of convenience which may affect some intermediate calculations in the proof but the final results will be the same.

Another difference is that we do not introduce
extra factors of $N$ in \eqref{def:Wloops} and $\phi(s)$ as in \cite{SSZloop}  (see also Remark~\ref{rem:no-N}) so when citing certain results from  \cite{SSZloop}  we will take account of such differences.
\end{remark}

\section{Strings on the lattice}
\label{sec:strings}

We introduce the strings with their basic operations  in this section,
and define a discrete string theory which contains 
open and closed strings.
We start by defining some terminologies which already show up  in Def.~\ref{def:Wloops} and Def.~\ref{def:Wlines} more precisely.

\subsection{Loops and lines}
\label{sec:Term}

\subsubsection{Paths, closed paths, cycles}

A {\it path} $\rho$ in the lattice  is defined to be a sequence of edges $e_1,e_2,\cdots,e_n$ 
such that $v(e_i) = u(e_{i+1})$ for $i =1,2,\cdots,n-1$.
We write $\rho = e_1e_2 \cdots e_n$ and say that $\rho$ has length $n$, i.e. $|\rho|=n$. 
We call $u(e_1)$ the beginning point, and $v(e_n)$ the ending point.
Both are called endpoints.

The path $\rho$ will be called a {\it closed path} if $v(e_n) = u(e_1)$. 
We also define a ``null-path'' to be a path with no edges.  We define the inverse of the path $\rho$ as $\rho^{-1} =e_n^{-1} e_{n-1}^{-1}\cdots e_1^{-1}$.
For another path  $\rho'= e_1' \cdots e_m' $  such that $v(e_n) = u(e_1')$, then the concatenated path $\rho\rho'$ is defined as
 $\rho\rho'  =e_1e_2 \cdots e_n  e_1' \cdots e_m'$.
 
If  $\rho = e_1e_2 \cdots e_n$  is a closed path, we will say that another closed path $\rho'$ is 
{\it cyclically equivalent} to $\rho$, and write $\rho \sim \rho'$ if 
$$
\rho' = e_i e_{i+1} \cdots e_n e_1 e_2 \cdots e_{i-1}
$$
for some $2\le i \le n$. The equivalence classes are called {\it cycles}.
In other words  the closed path $\rho = e_1e_2 \cdots e_n$  has a notion of the beginning point $u(e_1)$ and the ending point $v(e_n)$ (which are the same point), whereas a cycle does not have such a notion.

\subsubsection{Backtracking}

We  now define the notion of backtracking as  \cite[Section~2]{Cha}.
We say that a path $\rho= e_1e_2\cdots e_n$
 has a ``backtrack'' at location $i$, where $1\le i\le n-1$,
if 
$e_{i+1}=e_i^{-1}$.
If $\rho$ is a closed path,
then we will say that it has a backtrack
at location $n$ if $e_1 = e^{-1}_n$. 
A backtrack that occurs at some
location $i\le n-1$ will be called an ``interior backtrack'', and a backtrack at location $n$ will be called a ``terminal backtrack''.

\begin{figure}[h]
\begin{tikzpicture}[baseline=15]
\draw[thick,midarrow] (0, .04) -- (1, .04) -- (1,1) -- (-1,1) -- (-1,-1) -- (1,-1) -- (1, -.04) -- (0, -.04);
\node at (0.5,0.25) {$e_1$};
\node at (1.25,0.5) {$e_2$};
\node at (1.25,-0.5) {$e_9$};
\node at (0.5,-0.25) {$e_{10}$};
\end{tikzpicture}
\caption{A closed path $e_1e_2\cdots e_{10}$ with a terminal backtrack $e_1=e_{10}^{-1}$.}\label{fig:path-term-bt}
\end{figure}
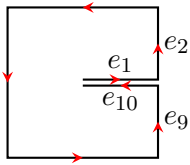

As in  \cite[Section~2]{Cha} given a path one can keep erasing backtracks (resp. interior backtracks) to obtain a  path without backtracks (resp. interior backtracks). Thanks to Lemma~\ref{lem:well-def}, 
we can  keep erasing backtracks in a cycle  to obtain a cycle  without backtracks.
The results do not depend on  the choice of the sequence of backtrack erasures.

We call the  paths without interior backtracks  {\it open lines} or simply {\it lines}, and the cycles without backtracks {\it loops}.

As in  \cite{Cha} we call loops with no edges null-loops.
Additionally, we add into the set of lines so-called null-lines,
denoted by $\ddots_x$ where $x$ is a lattice site.
One should think of it as having no edges and is a line from $x$ to $x$.

The following diagram illustrates these terminologies:
\begin{equs}[e:lines-loops]
{}&\quad\;
\{\mbox{closed paths}\}\subset  \{ \mbox{paths}\}
\qquad\qquad
 \{\mbox{cycles}=\mbox{closed paths} / \mbox{cyclic equivalence} \}
\\
& \mbox{erase interior backtracks}
\bigg\downarrow 
\qquad\qquad\qquad\qquad
\bigg\downarrow 
\mbox{erase all backtracks}
\\
&\qquad \mbox{add null-lines}\to 
\{\mbox{lines}\}
\qquad\qquad\qquad
\{\mbox{loops}\}
\end{equs}

We will draw  {\color{darkergreen}  loops in green},
and draw  {\color{blue} lines in blue} with two dots at the  endpoints. 
Loops and lines will be all called strings. Remark that this differs from
the terminology ``string'' in earlier papers such as \cite{Cha,SSZloop}.

According to diagram \eqref{e:lines-loops}, 
given the closed path in Fig.~\ref{fig:path-term-bt},
we can erase {\it all} backtracks (as in the right half of diagram \eqref{e:lines-loops}),
which creates a loop;
we can also only erase {\it interior} backtracks 
(as in the left half of diagram \eqref{e:lines-loops}, but for this example
in Fig.~\ref{fig:path-term-bt}
there is no interior backtrack so nothing to erase),
which creates a line.
These are shown in Fig.~\ref{fig:line-loop-bt}.

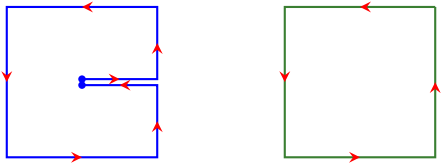
\begin{figure}[h]
\begin{tikzpicture}[baseline=15]
\draw[thick,colorline,midarrow] (0, .04) -- (1, .04) -- (1,1) -- (-1,1) -- (-1,-1) -- (1,-1) -- (1, -.04) -- (0, -.04);
\node[dot,colorline] at (0, .04) {};\node[dot,colorline] at (0, -.04) {};
\end{tikzpicture}
\qquad\qquad
\begin{tikzpicture}[baseline=15]
\draw[thick,colorloop,midarrow] (1,1) -- (-1,1) -- (-1,-1) -- (1,-1) -- (1,1);
\end{tikzpicture} 
\caption{A line and a loop created from the path in Fig.~\ref{fig:path-term-bt}.} \label{fig:line-loop-bt}
\end{figure}

\begin{remark}
We will often write a loop or line as a concatenation of edges or paths, 
for example, a line $\ell=aebec$ where $a,b,c$ are paths (without interior backtracks).
Here, when we write this way, $a$ and $c$ are allowed to be null paths, so $\ell$ may start or end with $e$. Also, note that where 
null-paths and null-loops are essentially ``nothing'', a null-line does have ``something'' which is a site $x$ and it will correspond to the nontrivial observable $\Phi_x^* \Phi_x$.
\end{remark}

\bl
\label{lem:well-def}
(1) Given a path, after  successively erasing interior backtracks, its two endpoints are not changed.

(2) Given a closed path, after  successively erasing interior backtracks, it is still a closed path.

(3) Let $\rho_1$ and $\rho_2$ be cyclically equivalent closed paths (namely they define the same cycle), 
and let $\rho_1'$ and $\rho_2'$ be two nonbacktracking closed paths obtained by successively erasing backtracks starting from 
$\rho_1$ and $\rho_2$ respectively. Then $\rho_1'$ and $\rho_2'$ are cyclically equivalent. 
\el

\begin{proof}
(1)(2) are obvious. (3) follows from \cite[Lemma~2.1]{Cha}.
\end{proof}

Both loops and lines will be called {\it strings}.
We write $\mL_0$ for the set of all the loops, $\mL_1$ for the set of all the open lines, and 
$$
\mL=\mL_0 \sqcup \mL_1.
$$
We have disjoint union here because loops are always cyclic equivalence classes whereas lines always have endpoints. 

As in \cite{Cha} we use $[\ell]$ to denote the nonbacktracking core of the string $\ell$. 
(Namely, thanks to Lemma~\ref{lem:well-def}, we can define a ``nonbacktracking
core'' which is simply the unique loop or line obtained by successive backtrack
erasures until there are no more backtracks.)
We will need to deal with a collection of strings. Suppose
$(\ell_1,\dots,\ell_n)$ and $(\ell_1',\dots,\ell_m')$ are two finite sequences of strings. These sequences are called equivalent if one can be obtained from the other by deleting or inserting null loops at arbitrary locations.  
The resulting  equivalence class is called a collection of strings.

\subsection{Operations}
\label{sec:Operations}

We define $4$ operations  called 
deformation, breaking,  extension and expansion
which will turn out to correspond to the drift terms of the YMH SDE,
as well as $5$ operations called merger, switching, splitting, twisting, and gluing
 which will turn out to correspond to the quadratic variations of the noise terms of the YMH SDE.
Recall that above Theorem~\ref{theo:main} 
we outlined informally what these operations will do on the strings.

Note that all these deformations that we will define below have 
clear geometric intuition, and, they will precisely arise when we apply It\^o's formula to the Wilson loop and line observables, which is an interesting feature of the model.

\subsubsection{Splitting}
\label{sec:Splitting}

Given a loop $\ell_0$, a line $\ell_1$,
and two  locations $x$ and $y$ in $\ell_i$ ($i\in \{0,1\}$),
we define the positive  and negative splittings of $\ell_i$ at locations $x$ and $y$
$$
\times_{x,y} \ell_i
$$
as follows.
If $\ell_i=aebec$ (where $a, b, c$ are paths and $e$ is an edge),  	
define the positive splitting
\begin{equ}[e:splitting1]
\times_{x,y} \ell_i = ( [aec] , [be])\in \mL_i\times \mL_0 \qquad i\in \{0,1\}.
\end{equ}
If $\ell_i=aebe^{-1}c$, 	
define negative splitting
\begin{equ}[e:splitting2]
\times_{x,y} \ell_i = ([ac], [b])\in \mL_i\times \mL_0 \qquad i\in \{0,1\}.
\end{equ}
We sometimes denote by 
$\times^1_{x,y} \ell_i$ and $\times^2_{x,y} \ell_i$  the
pair of strings obtained by splitting.

\begin{figure}[h]
\begin{tikzpicture}[baseline=15]
\draw[thick,colorline,midarrow,thick] (4,2)  -- (2.07,2) -- (2.07,0.07) -- (1,0.07) -- (1,-1) -- (0,-1) --(0,2) -- (1,2) -- (1,1) -- (2,1) -- (2,0) -- (3,0) -- (3,-1) -- (4,-1);
\node[dot,colorline] at (4,2) {};  \node[dot,colorline] at (4,-1) {};
\node at (1.8,0.5) {$e$};
\end{tikzpicture}
\qquad
\begin{tikzpicture}[baseline=15]
\draw[->,dashed,thick] (0,0) to (1,0);
\end{tikzpicture}
\qquad
\begin{tikzpicture}[baseline=15]
\draw[thick,colorloop,midarrow] (0,2) -- (1,2) -- (1,1) -- (2,1) -- (2,0) -- (1,0) -- (1,-1) -- (0,-1) -- (0,2);
\node at (1.8,0.5) {$e$};
\end{tikzpicture}
\begin{tikzpicture}[baseline=15]
\draw[thick,colorline,midarrow] (2,2)  -- (0,2) -- (0,0) -- (1,0) -- (1,-1) -- (2,-1);
\node[dot,colorline] at (2,2) {}; \node[dot,colorline] at (2,-1) {};
\node at (0.2,0.5) {$e$};
\end{tikzpicture}
\caption{Positive splitting of a line $\ell_i=aebec\in \mL_1$ as illustration of \eqref{e:splitting1}.}
\label{Positive splitting}
\end{figure}
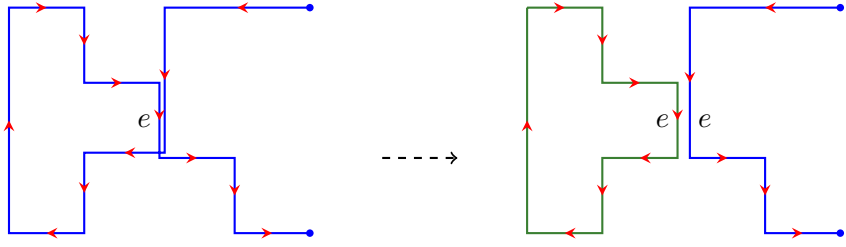

\subsubsection{Expansion}
\label{sec:Expansion}

We have four types of expansions:
(1) expansion at an edge $e$ by a plaquette-loop 
(which already existed in \cite{Jafar,SSZloop} in the $SU(N)$ loop equation),
(2) expansion at an edge $e$ by an edge-line, 
(3) expansion at a ``Higgs vertex'' (i.e. an endpoint of an open line) by an edge-line,
and, (4) expansion at a ``Higgs vertex'' by a null-line.  

For the first type (expansion at an edge $e$ by a plaquette $p$), as in \cite{SSZloop}, let $\ell\in \mL$ contain an edge $e$ at location $x$. 
A positive expansion of $\ell$ at location $x$ by a plaquette $p$ passing through 
$e^{-1}$ replaces $\ell$ with the pair $(\ell,p)$ of a string and a loop. 
A negative expansion of $\ell$ at location $x$ by a plaquette $p$ passing through
 $e$ replaces $\ell$ with the pair  $(\ell, p)$ of a string and a loop.

\begin{figure}[h]
\begin{tikzpicture}[baseline=15]
\draw[thick,colorline,midarrow] (1,0)-- (0,0) -- (0,1) -- (3,1) -- (3,0)  -- (2,0); 
\node[dot,colorline] at (1,0) {}; \node[dot,colorline] at (2,0) {};
\node at (2.8,0.5) {$e$};
\end{tikzpicture}
\qquad
\begin{tikzpicture}[baseline=5]
\draw[->,dashed,thick] (0,0) to (1,0);
\end{tikzpicture}
\qquad
\begin{tikzpicture}[baseline=15]
\draw[thick,colorline,midarrow] (1,0)-- (0,0) -- (0,1) -- (3,1) -- (3,0)  -- (2,0); 
\node[dot,colorline] at (1,0) {}; \node[dot,colorline] at (2,0) {};
\node at (2.8,0.5) {$e$};
\draw[thick,colorloop,midarrow] (3.07,0) -- (3.07,1) -- (4,1) -- (4,0)  -- (3.07,0); 
\node at (3.5,0.5) {$p$};
\end{tikzpicture}
\caption{Positive expansion of a line by $p$.}
\end{figure}
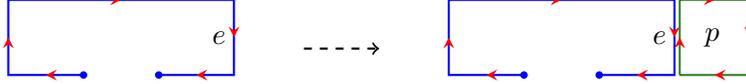

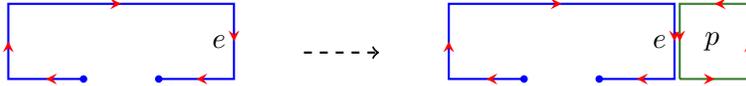
\begin{figure}[h]
\begin{tikzpicture}[baseline=15]
\draw[thick,colorline,midarrow] (1,0)-- (0,0) -- (0,1) -- (3,1) -- (3,0)  -- (2,0); 
\node[dot,colorline] at (1,0) {}; \node[dot,colorline] at (2,0) {};
\node at (2.8,0.5) {$e$};
\end{tikzpicture}
\qquad
\begin{tikzpicture}[baseline=5]
\draw[->,dashed,thick] (0,0) to (1,0);
\end{tikzpicture}
\qquad
\begin{tikzpicture}[baseline=15]
\draw[thick,colorline,midarrow] (1,0)-- (0,0) -- (0,1) -- (3,1) -- (3,0)  -- (2,0); 
\node[dot,colorline] at (1,0) {}; \node[dot,colorline] at (2,0) {};
\node at (2.8,0.5) {$e$};
\draw[thick,colorloop,midarrow] (3.07,0) -- (4,0) -- (4,1)  -- (3.07,1)   -- (3.07,0); 
\node at (3.5,0.5) {$p$};
\end{tikzpicture}
\caption{Negative expansion of a line by $p$.}
\end{figure}

For the second type (expansion at an edge $e$ by an edge), 
let $\ell\in \mL$ contain an edge $e$ at location $x$. 
A positive expansion of $\ell$ at location $x$ by 
$e^{-1}$ replaces $\ell$ with the pair  $(\ell,e^{-1})$ of a string and a line. 
A negative expansion of $\ell$ at location $x$ by 
$e$ replaces $\ell$ with the pair  $(\ell, e)$ of a string and a line.

	\begin{figure}[h]
\begin{tikzpicture}[baseline=15]
\draw[thick,colorline,midarrow] (1,0)-- (0,0) -- (0,1) -- (3,1) -- (3,0)  -- (2,0); 
\node[dot,colorline] at (1,0) {}; \node[dot,colorline] at (2,0) {};
\node at (2.8,0.5) {$e$};
\end{tikzpicture}
\qquad
\begin{tikzpicture}[baseline=5]
\draw[->,dashed,thick] (0,0) to (1,0);
\end{tikzpicture}
\qquad
\begin{tikzpicture}[baseline=15]
\draw[thick,colorline,midarrow] (1,0)-- (0,0) -- (0,1) -- (3,1) -- (3,0)  -- (2,0); 
\node[dot,colorline] at (1,0) {}; \node[dot,colorline] at (2,0) {};
\node at (2.8,0.5) {$e$};
\draw[thick,colorline,midarrow] (3.1,0) -- (3.1,1); 
\node[dot,colorline] at (3.1,0) {}; \node[dot,colorline] at (3.1,1) {};
\end{tikzpicture}
\caption{Positive expansion of a line by $e^{-1}$.}
\end{figure}
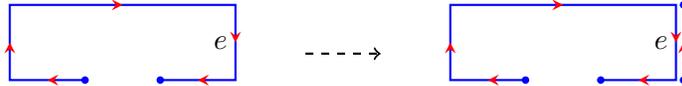

	\begin{figure}[h]
\begin{tikzpicture}[baseline=15]
\draw[thick,colorline,midarrow] (1,0)-- (0,0) -- (0,1) -- (3,1) -- (3,0)  -- (2,0); 
\node[dot,colorline] at (1,0) {}; \node[dot,colorline] at (2,0) {};
\node at (2.8,0.5) {$e$};
\end{tikzpicture}
\qquad
\begin{tikzpicture}[baseline=5]
\draw[->,dashed,thick] (0,0) to (1,0);
\end{tikzpicture}
\qquad
\begin{tikzpicture}[baseline=15]
\draw[thick,colorline,midarrow] (1,0)-- (0,0) -- (0,1) -- (3,1) -- (3,0)  -- (2,0); 
\node[dot,colorline] at (1,0) {}; \node[dot,colorline] at (2,0) {};
\node at (2.8,0.5) {$e$};
\draw[thick,colorline,midarrow] (3.1,1) -- (3.1,0); 
\node[dot,colorline] at (3.1,0) {}; \node[dot,colorline] at (3.1,1) {};
\end{tikzpicture}
\caption{Negative expansion of a line by $e$.}
\end{figure}
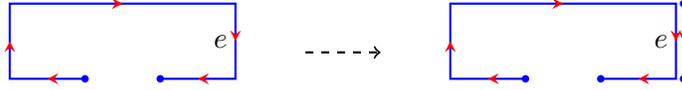

For the third type  (expansion at the end of a line by an edge-line), 
given a line $\ell_1$ and an edge $e$ where $v(\ell_1)=u(e)$ or $v(\ell_1)=v(e)$, 
we define the  expansion of $\ell_1$ by $e$ as
\begin{equ}[e:expansion1e]
 (\ell_1, e) \in \mL_1\times \mL_1.
\end{equ}
Given a line $\ell_1$ and an edge $e$ where $u(\ell_1)=v(e)$ or $u(\ell_1)=u(e)$, 
we define the  expansion of $\ell_1$ by $e$ as
\begin{equ}[e:expansion2e]
( e, \ell_1)\in \mL_1\times \mL_1.
\end{equ}

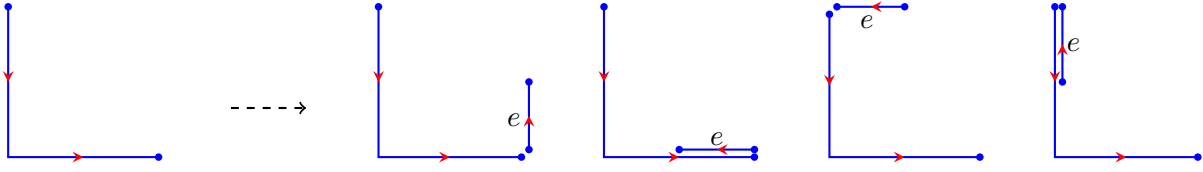
\begin{figure}[h]
\begin{tikzpicture}
\draw[thick,colorline,midarrow]  (0,2) --  (0,0)  -- (2,0);
\node[dot,colorline] at (0,2) {}; \node[dot,colorline] at (2,0) {};
\end{tikzpicture}
\qquad
\begin{tikzpicture}[baseline=-20]
\draw[->,dashed,thick] (0,0) to (1,0);
\end{tikzpicture}
\qquad
\begin{tikzpicture}
\draw[thick,colorline,midarrow]  (0,2) --  (0,0)  -- (1.9, 0);
\node[dot,colorline] at (0,2) {}; \node[dot,colorline] at (1.9 ,0) {};
\draw[thick,colorline,midarrow]  (2, .1) -- (2,1);
\node[dot,colorline] at (2, .1) {}; \node[dot,colorline] at (2,1) {};
\node at (1.8,0.5) {$e$};
\end{tikzpicture}
\qquad
\begin{tikzpicture}
\draw[thick,colorline,midarrow]  (0,2) --  (0,0)  -- (2, 0);
\node[dot,colorline] at (0,2) {}; \node[dot,colorline] at (2 ,0) {};
\draw[thick,colorline,midarrow]  (2, .1) -- (1, .1);
\node[dot,colorline] at (1, .1) {}; \node[dot,colorline] at (2, .1) {};
\node at (1.5,0.25) {$e$};
\end{tikzpicture}
\qquad
\begin{tikzpicture}
\draw[thick,colorline,midarrow]  (0, 1.9) --  (0,0)  -- (2, 0);
\node[dot,colorline] at (0,1.9) {}; \node[dot,colorline] at (2 ,0) {};
\draw[thick,colorline,midarrow] (1, 2) -- (0.1, 2);
\node[dot,colorline] at (1,2) {}; \node[dot,colorline] at (0.1 ,2) {};
\node at (0.5,1.8) {$e$};
\end{tikzpicture}
\qquad
\begin{tikzpicture}
\draw[thick,colorline,midarrow]  (0,2) --  (0,0)  -- (1.9, 0);
\node[dot,colorline] at (0,2) {}; \node[dot,colorline] at (1.9 ,0) {};
\draw[thick,colorline,midarrow] (0.1,1) -- (0.1,2);
\node[dot,colorline] at (0.1,1) {}; \node[dot,colorline] at (0.1,2) {};
\node at (0.25,1.5) {$e$};
\end{tikzpicture}
\caption{Some examples of expansion at an end of a line by $e$. The same pairs of lines with the direction of each $e$ reversed is also considered as this type of expansion.} \label{fig:exp-x}
\end{figure}
For the last type of expansion,
given a line $\ell_1$ with $u(\ell_1)=x$, $v(\ell_1)=y$, and a positive integer $n$, 
we define the
expansion of $\ell_1$ by $n$ null-lines as
\begin{equ}[e:expansion-null]
 (\ddots_x,\cdots,\ddots_x,\ell_1) \in \mL_1\times \mL_1\times \cdots \times \mL_1,
\quad
 (\ell_1, \ddots_y,\cdots,\ddots_y) \in \mL_1\times \mL_1\times \cdots \times \mL_1
\end{equ}
where in each case $\ddots$ repeats $n$ times.
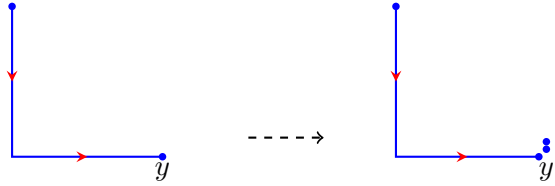
\begin{figure}[h]
\begin{tikzpicture}
\draw[thick,colorline,midarrow]  (0,2) --  (0,0)  -- (2,0);
\node[dot,colorline] at (0,2) {}; \node[dot,colorline] at (2,0) {};
\node at (2,-0.2) {$y$};
\end{tikzpicture}
\qquad
\begin{tikzpicture}[baseline=-20]
\draw[->,dashed,thick] (0,0) to (1,0);
\end{tikzpicture}
\qquad
\begin{tikzpicture}
\draw[thick,colorline,midarrow]  (0,2) --  (0,0)  -- (1.9, 0);
\node[dot,colorline] at (0,2) {}; \node[dot,colorline] at (1.9 ,0) {};
\node[dot,colorline] at (2, .1) {}; \node[dot,colorline] at (2,.2) {};
\node at (2,-0.2) {$y$};
\end{tikzpicture}
\caption{Expansion at an end of a line by a null-line.}
\end{figure}

Note that we distinguish positive versus negative expansions in the first and second types, which will show up in the ``fixed edge'' loop equations, whereas  
 there is no positive versus negative expansion in the third and fourth type,
which will show up in the ``fixed vertex'' loop equations.

\subsubsection{Merger}
\label{sec:Merger}

Given two loops $\ell_0$, $\ell_0'$,
as well as a location $x$ in $\ell_0$ and a location $y$ in $l'_0$, 
we define
 the positive and negative mergers of $\ell_0$ and $l'_0$
	at locations $x,y$
$$
\ell_0\oplus_{x,y} l'_0  \qquad \mbox{and} \qquad
\ell_0\ominus_{x,y} l'_0
$$  
as follows. If $\ell_0=aeb$ and $l'_0=ced$ (where $a,b,c,d$ are paths and $e$ is an edge), define
\begin{equ}[e:merger00-1]
\ell_0\oplus_{x,y} \ell_0' = [aedceb] \in \mL_0,   
\qquad 
\ell_0\ominus_{x,y} \ell_0'=[ac^{-1}d^{-1}b] \in \mL_0.
\end{equ} 
If $\ell_0=aeb$ and $\ell_0'=ce^{-1}d$,  
define \footnote{Note that in this case $\ell_0\oplus_{x,y} \ell_0'$ and $\ell_0'\oplus_{y,x} \ell_0$ are slightly different, namely their orientations are opposite. For $SO(N)$ the Wilson loops with opposite orientations are equal; but for $U(N)$ they are not equal and thus the operation $\oplus$ is not quite symmetric.}
\begin{equ}[e:merger00-2]
\ell_0\oplus_{x,y} \ell_0' =[aec^{-1}d^{-1}eb] \in \mL_0,   
\qquad
\ell_0\ominus_{x,y} \ell_0'=[adcb] \in \mL_0.
\end{equ}
Here (and also in the next paragraph) $x$ and $y$ are the unique location in $\ell_0$ and $\ell'_0$, respectively, where $e$ or $e^{-1}$ occurs and $e$ is the edge occurring at location $x$ in $\ell_0$. 

Fig.~\ref{Positive merger 00} and Fig.~\ref{Negative merger 00} illustrate mergers of two loops.

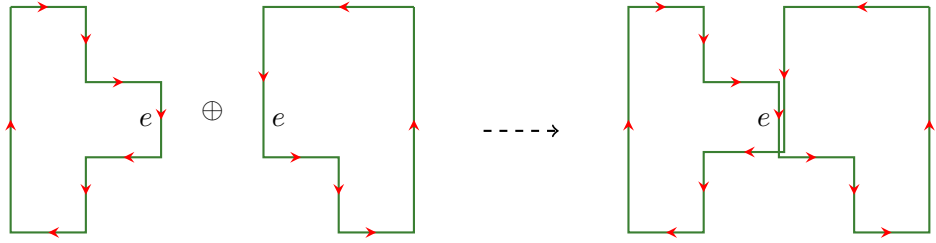
\begin{figure}[h]
\begin{tikzpicture}[baseline=15]
\draw[thick,colorloop,midarrow] (0,2) -- (1,2) -- (1,1) -- (2,1) -- (2,0) -- (1,0) -- (1,-1) -- (0,-1) -- (0,2);
\node at (1.8,0.5) {$e$};
\end{tikzpicture}
$\quad\oplus\quad$
\begin{tikzpicture}[baseline=15]
\draw[thick,colorloop,midarrow] (2,2)  -- (0,2) -- (0,0) -- (1,0) -- (1,-1) -- (2,-1) -- (2,2);
\node at (0.2,0.5) {$e$};
\end{tikzpicture}
\qquad
\begin{tikzpicture}[baseline=5]
\draw[->,dashed,thick] (0,0) to (1,0);
\end{tikzpicture}
\qquad
\begin{tikzpicture}[baseline=15]
\draw[thick,colorloop,midarrow] (4,2)  -- (2.07,2) -- (2.07,0.07) -- (1,0.07) -- (1,-1) -- (0,-1) --(0,2) -- (1,2) -- (1,1) -- (2,1) -- (2,0) -- (3,0) -- (3,-1) -- (4,-1) -- (4,2);
\node at (1.8,0.5) {$e$};
\end{tikzpicture}
\caption{Positive merger of  loops.}
\label{Positive merger 00}
\end{figure}

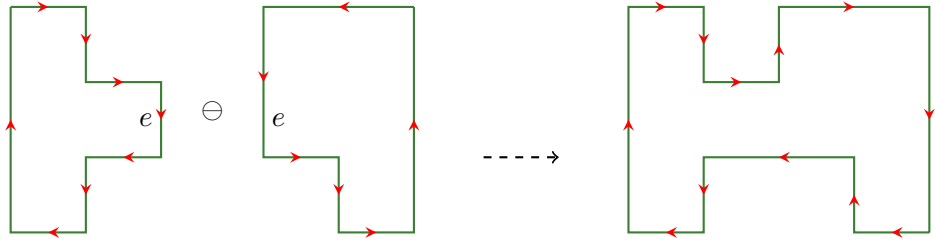
\begin{figure}[h]
\begin{tikzpicture}[baseline=15]
\draw[thick,colorloop,midarrow] (0,2) -- (1,2) -- (1,1) -- (2,1) -- (2,0) -- (1,0) -- (1,-1) -- (0,-1) -- (0,2);
\node at (1.8,0.5) {$e$};
\end{tikzpicture}
$\quad\ominus\quad$
\begin{tikzpicture}[baseline=15]
\draw[thick,colorloop,midarrow]  (2,2)  -- (0,2)  -- (0,0) -- (1,0) -- (1,-1) -- (2,-1) -- (2,2);
\node at (0.2,0.5) {$e$};
\end{tikzpicture}
\qquad
\begin{tikzpicture}[baseline=15]
\draw[->,dashed,thick] (0,0) to (1,0);
\end{tikzpicture}
\qquad
\begin{tikzpicture}[baseline=15]
\draw[thick,colorloop,midarrow] (4,-1) -- (3,-1) -- (3,0)  -- (1,0) -- (1,-1) -- (0,-1) -- (0,2) -- (1,2) -- (1,1) -- (2,1) -- (2,2) -- (4,2) -- (4,-1);
\end{tikzpicture}
\caption{Negative merger of  loops.}
\label{Negative merger 00}
\end{figure}

Given a loop $\ell_0$ and a line $\ell_1$, 
as well as a location $x$ in $\ell_0$ and a location $y$ in $\ell_1$, 
we define
 the positive and negative mergers of $\ell_0$ and $\ell_1$
	at locations $x,y$
$$
\ell_0\oplus_{x,y} \ell_1  \qquad \mbox{and} \qquad
\ell_0\ominus_{x,y} \ell_1
$$  
as follows.
If $\ell_0=aeb$ and $\ell_1=ced$ 
(where $a,b,c,d$ are paths and $e$ is an edge), define 
\footnote{We recall that in the merger of two loops, the letters in $[aedceb] \in \mL_0$ are allowed to be cyclically permuted. 
However, in the merger of a loop and a line here,  the letters in $[cebaed] \in \mL_1$ are not allowed to be cyclically permuted. Moreover, note that $\ell_0\ominus_{x,y} \ell_1$ and $\ell_1\ominus_{y,x} \ell_0$ are almost the same line except that the orientations are opposite.}
\begin{equs}[e:merger01-1]
\ell_0\oplus_{x,y} \ell_1  =\ell_1  \oplus_{y,x} \ell_0 & =[cebaed]   \in \mL_1,  
\\
\ell_0\ominus_{x,y} \ell_1=[d^{-1}bac^{-1}] \in \mL_1,
\qquad&
\ell_1\ominus_{y,x} \ell_0=[c a^{-1}b^{-1}d] \in \mL_1.
\end{equs} 
If $\ell_0=aeb$ and $\ell_1=ce^{-1}d$,  
define 
\begin{equs}[e:merger01-2]
\ell_0\oplus_{x,y} \ell_1  =[d^{-1}eb aec^{-1}]   \in \mL_1,  
\qquad&
\ell_1  \oplus_{y,x} \ell_0 = [c e^{-1} a^{-1} b^{-1} e^{-1} d]   \in \mL_1,  
\\
\ell_0\ominus_{x,y} \ell_1=\ell_1\ominus_{y,x} \ell_0 & =[cbad] \in \mL_1.
\end{equs}
The following pictures illustrate mergers of a loop and a line.

\begin{figure}[h]
\begin{tikzpicture}[baseline=15]
\draw[thick,colorloop,midarrow] (0,2) -- (1,2) -- (1,1) -- (2,1) -- (2,0) -- (1,0) -- (1,-1) -- (0,-1) -- (0,2);
\node at (1.8,0.5) {$e$};
\end{tikzpicture}
$\quad\oplus\quad$
\begin{tikzpicture}[baseline=15]
\draw[thick,colorline,midarrow] (2,2)  -- (0,2) -- (0,0) -- (1,0) -- (1,-1) -- (2,-1);
\node[dot,colorline] at (2,2) {}; \node[dot,colorline] at (2,-1) {};
\node at (0.2,0.5) {$e$};
\end{tikzpicture}
\qquad
\begin{tikzpicture}[baseline=15]
\draw[->,dashed,thick] (0,0) to (1,0);
\end{tikzpicture}
\qquad
\begin{tikzpicture}[baseline=15]
\draw[thick,colorline,midarrow,thick] (4,2)  -- (2.07,2) -- (2.07,0.07) -- (1,0.07) -- (1,-1) -- (0,-1) --(0,2) -- (1,2) -- (1,1) -- (2,1) -- (2,0) -- (3,0) -- (3,-1) -- (4,-1);
\node[dot,colorline] at (4,2) {};  \node[dot,colorline] at (4,-1) {};
\node at (1.8,0.5) {$e$};
\end{tikzpicture}
\caption{Positive merger of a loop and a line.}
\label{Positive merger}
\end{figure}
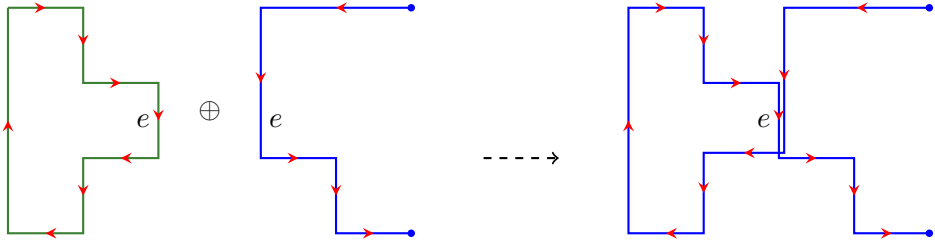

\begin{figure}[h]
\begin{tikzpicture}[baseline=15]
\draw[thick,colorloop,midarrow] (0,2) -- (1,2) -- (1,1) -- (2,1) -- (2,0) -- (1,0) -- (1,-1) -- (0,-1) -- (0,2);
\node at (1.8,0.5) {$e$};
\end{tikzpicture}
$\quad\ominus\quad$
\begin{tikzpicture}[baseline=15]
\draw[thick,colorline,midarrow]  (2,2)  -- (0,2)  -- (0,0) -- (1,0) -- (1,-1) -- (2,-1);
\node[dot,colorline] at (2,2) {}; \node[dot,colorline] at (2,-1) {};
\node at (0.2,0.5) {$e$};
\end{tikzpicture}
\qquad
\begin{tikzpicture}[baseline=15]
\draw[->,dashed,thick] (0,0) to (1,0);
\end{tikzpicture}
\qquad
\begin{tikzpicture}[baseline=15]
\draw[thick,colorline,midarrow] (4,-1) -- (3,-1) -- (3,0)  -- (1,0) -- (1,-1) -- (0,-1) -- (0,2) -- (1,2) -- (1,1) -- (2,1) -- (2,2) -- (4,2);
\node[dot,colorline] at (4,-1) {}; \node[dot,colorline] at (4,2) {}; 
\end{tikzpicture}
\caption{Negative merger of a loop and a line.}
\label{Negative merger}
\end{figure}
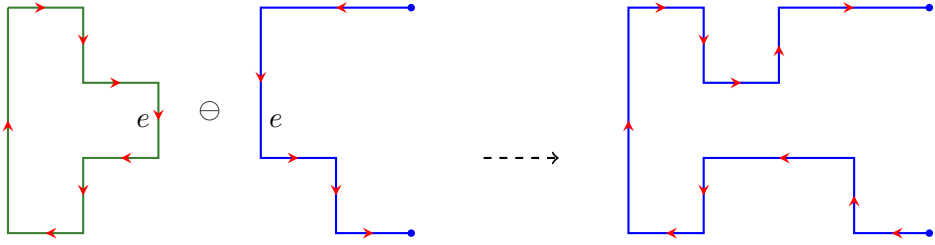

\subsubsection{Switching}
\label{sec:Switching}

Given two lines $\ell_1$, $\ell_1'$,
as well as a location $x$ in $\ell_1$ and a location $y$ in $l'_1$,
we define
 the positive and negative switchings of $\ell_1$ and $\ell_1'$
	at locations $x,y$
$$
\ell_1\oplus_{x,y} \ell'_1  \qquad \mbox{and} \qquad
\ell_1\ominus_{x,y} l'_1
$$  
as follows.
If $\ell_1=aeb$ and $l'_1=ced$ (where $a,b,c,d$ are paths and $e$ is an edge), define
\begin{equs}[e:switching1]
\ell_1\oplus_{x,y} \ell_1'   &= ([aed],[ceb])   \in \mL_1\times \mL_1,
\\
\ell_1\ominus_{x,y} \ell_1' &  = ([ac^{-1}],[d^{-1}b])   \in \mL_1\times \mL_1.
\end{equs} 
If $\ell_1=aeb$ and $\ell_1'=ce^{-1}d$,  
define
\begin{equs}[e:switching2]
\ell_1\oplus_{x,y} \ell_1'   &= ([aec^{-1}],[d^{-1}eb])   \in \mL_1\times \mL_1,
\\
\ell_1\ominus_{x,y} \ell_1'  & = ([ad],[cb])   \in \mL_1\times \mL_1.
\end{equs}
Here $x$ and $y$ are the unique location in $\ell_1$ and $l'_1$, respectively, where $e$ or $e^{-1}$ occurs and $e$ is the edge occurring at location $x$ in $\ell_1$. 

Note that we have used the same notation 
$\oplus_{x,y} $ and $\ominus_{x,y}$
for switchings and mergers, but this will not cause confusion,
since switchings are always for two lines whereas mergers always involve at least a loop. 
The following pictures illustrate switchings of two open lines.

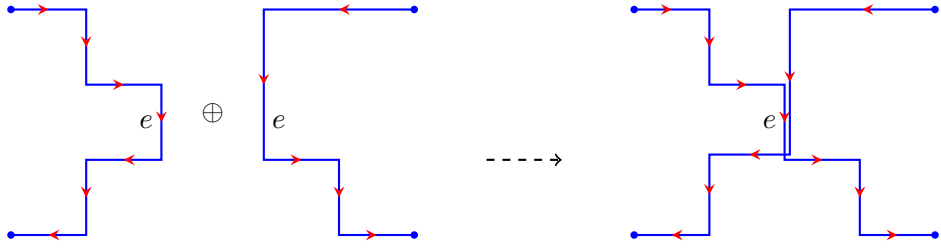
\begin{figure}[h]
\begin{tikzpicture}[baseline=15]
\draw[thick,colorline,midarrow] (0,2) -- (1,2) -- (1,1) -- (2,1) -- (2,0) -- (1,0) -- (1,-1) -- (0,-1);
\node[dot,colorline] at (0,2) {}; \node[dot,colorline] at (0,-1) {};
\node at (1.8,0.5) {$e$};
\end{tikzpicture}
$\quad\oplus\quad$
\begin{tikzpicture}[baseline=15]
\draw[thick,colorline,midarrow] (2,2)  -- (0,2) -- (0,0) -- (1,0) -- (1,-1) -- (2,-1);
\node[dot,colorline] at (2,2) {}; \node[dot,colorline] at (2,-1) {};
\node at (0.2,0.5) {$e$};
\end{tikzpicture}
\qquad
\begin{tikzpicture}[baseline=15]
\draw[->,dashed,thick] (0,0) to (1,0);
\end{tikzpicture}
\qquad
\begin{tikzpicture}[baseline=15]
\draw[thick,colorline,midarrow] (0,2) -- (1,2) -- (1,1) -- (2,1) -- (2,0) -- (3,0) -- (3,-1) -- (4,-1);
\node[dot,colorline] at (0,2) {}; \node[dot,colorline] at (4,-1) {};
\draw[thick,colorline,midarrow] (4,2)  -- (2.07,2) -- (2.07,0.07) -- (1,0.07) -- (1,-1) -- (0,-1);
\node[dot,colorline] at (4,2) {}; \node[dot,colorline] at (0,-1) {};
\node at (1.8,0.5) {$e$};
\end{tikzpicture}
\caption{Positive switching of two open lines.}
\label{Positive switching}
\end{figure}

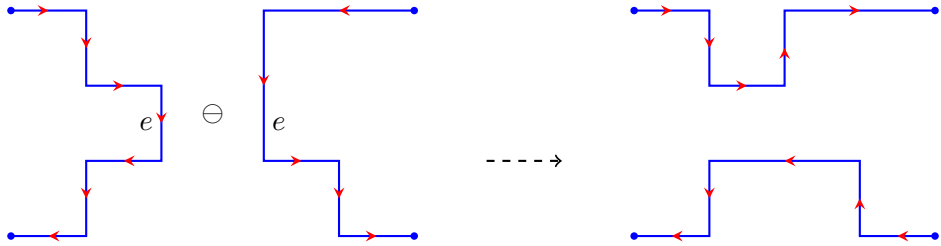
\begin{figure}[h]
\begin{tikzpicture}[baseline=15]
\draw[thick,colorline,midarrow] (0,2) -- (1,2) -- (1,1) -- (2,1) -- (2,0) -- (1,0) -- (1,-1) -- (0,-1);
\node[dot,colorline] at (0,2) {}; \node[dot,colorline] at (0,-1) {};
\node at (1.8,0.5) {$e$};
\end{tikzpicture}
$\quad\ominus\quad$
\begin{tikzpicture}[baseline=15]
\draw[thick,colorline,midarrow]  (2,2)  -- (0,2)  -- (0,0) -- (1,0) -- (1,-1) -- (2,-1);
\node[dot,colorline] at (2,2) {}; \node[dot,colorline] at (2,-1) {};
\node at (0.2,0.5) {$e$};
\end{tikzpicture}
\qquad
\begin{tikzpicture}[baseline=15]
\draw[->,dashed,thick] (0,0) to (1,0);
\end{tikzpicture}
\qquad
\begin{tikzpicture}[baseline=15]
\draw[thick,colorline,midarrow] (0,2) -- (1,2) -- (1,1) -- (2,1) -- (2,2) -- (4,2);
\node[dot,colorline] at (0,2) {}; \node[dot,colorline] at (4,2) {};
\draw[thick,colorline,midarrow]  (4,-1) -- (3,-1) -- (3,0)  -- (1,0) -- (1,-1) -- (0,-1);
\node[dot,colorline] at (4,-1) {}; \node[dot,colorline] at (0,-1) {};
\end{tikzpicture}
\caption{Negative switching of two open lines.}
\label{Negative switching}
\end{figure}

\subsubsection{Deformation}
\label{sec:Deformation}

If a loop or line $\ell'$ is produced by merging a plaquette with a loop or line $\ell$, we will say that $\ell'$ is a  deformation of $\ell$.
If the merger is positive we will say that
$\ell'$ is a positive deformation,
whereas if the merger is negative we will say that $\ell'$ is a negative deformation. 

We will use the notations $\ell\oplus_x p$ and $\ell \ominus_x p$ to  denote the loops or lines obtained by merging $\ell$ and $p$ at locations $x$ and $y$,
where $y$ is the unique location in $p$ where $e$ or $e^{-1}$ occurs, 
and $e$ is the edge occurring at location $x$ in $\ell$.

\begin{figure}[h]
\begin{tikzpicture}[baseline=15]
\draw[thick,colorline,midarrow] (1,-1)-- (0,-1) -- (0,1) -- (3,1) -- (3,-1)  -- (2,-1); 
\node[dot,colorline] at (1,-1) {}; \node[dot,colorline] at (2,-1) {};
\node at (1.5,0.8) {$e$};
\end{tikzpicture}
\qquad
\begin{tikzpicture}[baseline=5]
\draw[->,dashed,thick] (0,0) to (1,0);
\end{tikzpicture}
\qquad
\begin{tikzpicture}[baseline=15]
\draw[thick,colorline,midarrow] (1,-1)-- (0,-1) -- (0,1) --(1,1) -- (1,2) -- (2,2) -- (2,1)-- (3,1) -- (3,-1)  -- (2,-1); 
\node[dot,colorline] at (1,-1) {}; \node[dot,colorline] at (2,-1) {};
\node at (1.5,0.8) {$e$};
\end{tikzpicture}
\qquad
\begin{tikzpicture}[baseline=15]
\draw[thick,colorline,midarrow] (1,-1)-- (0,-1) -- (0,1) -- (2,1) -- (2,0) -- (1,0) -- (1,1.07) -- (2,1.07)-- (3,1.07) -- (3,-1)  -- (2,-1); 
\node[dot,colorline] at (1,-1) {}; \node[dot,colorline] at (2,-1) {};
\node at (1.5,0.8) {$e$};
\end{tikzpicture}
\caption{Negative and positive deformations of a line.}
\end{figure}
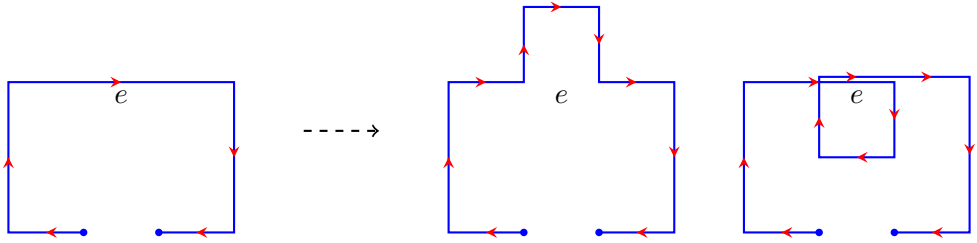

\subsubsection{Breaking}
\label{sec:Breaking}

Given a loop $\ell_0$, a line $\ell_1$,
and a  location $x$ in $\ell_i$ ($i\in \{0,1\}$),
we define the positive  and negative breaking of $\ell_i$ at location $x$
$$
\nparallel_{x}^\pm \ell_i
$$
as follows.
If $\ell_i=ae b$  (where $a, b$ are paths and $e$ is an edge), 
define negative breaking
\begin{equ}[e:breaking1]
\nparallel_{x}^- \ell_0 = [ ba] \in \mL_1,
\qquad
\nparallel_{x}^- \ell_1 = ( [a],[ b])\in \mL_1\times \mL_1,
\end{equ}
and positive breaking
\begin{equ}[e:breaking2]
\nparallel_{x}^+ \ell_0 = [e ba e] \in \mL_1,
\qquad
\nparallel_{x}^+ \ell_1 = ( [a e],[e b])\in \mL_1\times \mL_1.
\end{equ}
Here $x$ is the unique location in $\ell_i$ where $e$  occurs.

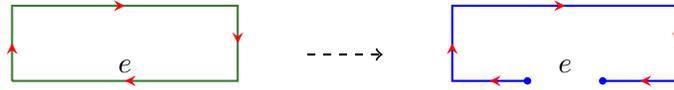
\begin{figure}[h]
\begin{tikzpicture}[baseline=15]
\draw[thick,colorloop,midarrow] (0,0) -- (0,1) -- (3,1) -- (3,0) -- (0,0);
\node at (1.5,0.2) {$e$};
\end{tikzpicture}
\qquad
\begin{tikzpicture}[baseline=5]
\draw[->,dashed,thick] (0,0) to (1,0);
\end{tikzpicture}
\qquad
\begin{tikzpicture}[baseline=15]
\draw[thick,colorline,midarrow] (1,0)-- (0,0) -- (0,1) -- (3,1) -- (3,0)  -- (2,0); 
\node[dot,colorline] at (1,0) {}; \node[dot,colorline] at (2,0) {};
\node at (1.5,0.2) {$e$};
\end{tikzpicture}
\caption{Negative breaking of a loop, resulting in a line.}
\end{figure}

\begin{figure}[h]
\begin{tikzpicture}[baseline=15]
\draw[thick,colorline,midarrow]  (3,1) -- (3,0)  -- (0,0) -- (0,1);
\node[dot,colorline] at (3,1) {}; \node[dot,colorline] at (0,1) {};
\node at (1.5,0.2) {$e$};
\end{tikzpicture}
\qquad
\begin{tikzpicture}[baseline=5]
\draw[->,dashed,thick] (0,0) to (1,0);
\end{tikzpicture}
\qquad
\begin{tikzpicture}[baseline=15]
\draw[thick,colorline,midarrow]  (3,1) -- (3, .07)  -- (1, .07);
\node[dot,colorline] at (3,1) {}; \node[dot,colorline] at (1, .07) {};
\draw[thick,colorline,midarrow]  (2,0) -- (0,0) -- (0,1);
\node[dot,colorline] at (2,0) {}; \node[dot,colorline] at (0,1) {};
\node at (1.5,0.2) {$e$};
\end{tikzpicture}
\caption{Positive breaking of an open line, resulting in two lines.}
\end{figure}
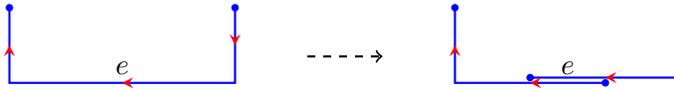

\subsubsection{Twisting}
\label{sec:Twisting}

Let $i\in \{0,1\}$. Given a loop or line $\ell\in \mL_i$, 
we write 
$\propto_{x,y} \ell $ for the negative twisting
	if $l$ contains an edge $e$ at both $x$ and $y$,
	or positive twisting if $\ell$ contains an edge $e$ at location $x$ and $e^{-1}$ at location $y$, which are defined as follows. 
	For $\ell=aebec$, define 
	the negative twisting 
	\begin{equ}[e:twisting1]
	\propto_{x,y} \ell \eqdef [ab^{-1}c] \in \mL_i.
	\end{equ}
	 For  $\ell=aebe^{-1}c$,  define 
	the positive twisting 
\begin{equ}[e:twisting2]
\propto_{x,y} \ell \eqdef [aeb^{-1}e^{-1}c] \in \mL_i.
\end{equ}

\begin{figure}[h]
\begin{tikzpicture}[baseline=15]
\draw[thick,colorline,midarrow] (1,-1)-- (0,-1) -- (0,1) -- (2,1) -- (2,0) -- (1,0) -- (1,1.07) -- (2,1.07)-- (3,1.07) -- (3,-1)  -- (2,-1); 
\node[dot,colorline] at (1,-1) {}; \node[dot,colorline] at (2,-1) {};
\node at (1.5,0.8) {$e$};
\end{tikzpicture}
\qquad
\begin{tikzpicture}[baseline=10]
\draw[->,dashed,thick] (0,0) to (1,0);
\end{tikzpicture}
\qquad
\begin{tikzpicture}[baseline=15]
\draw[thick,colorline,midarrow] (1,-1)-- (0,-1) -- (0,1) --(1,1) -- (1,0) -- (2,0) -- (2,1)-- (3,1) -- (3,-1)  -- (2,-1); 
\node[dot,colorline] at (1,-1) {}; \node[dot,colorline] at (2,-1) {};
\node at (1.5,0.8) {$e$};
\end{tikzpicture}
\caption{Negative twisting of a line.}
\end{figure}
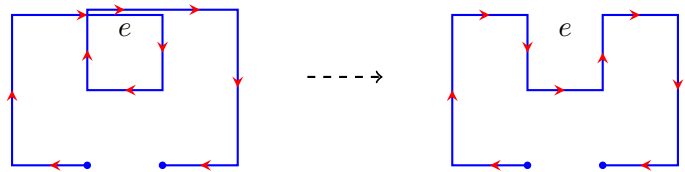

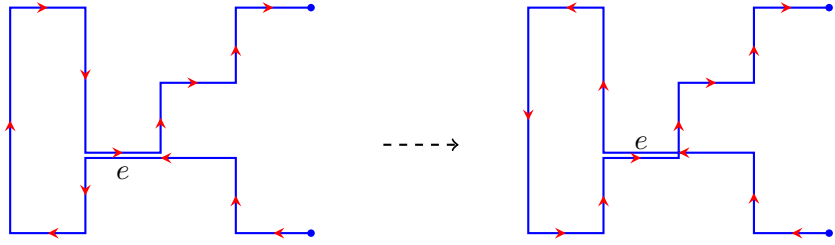
\begin{figure}[h]
\begin{tikzpicture}[baseline=15]
\draw[thick,colorline,midarrow] (4,-1) -- (3,-1) -- (3,0)  -- (1,0) -- (1,-1) -- (0,-1) -- (0,2) -- (1,2) -- (1,0.07) -- (2,0.07) -- (2,1) -- (3,1) -- (3,2) -- (4,2);
\node[dot,colorline] at (4,-1) {}; \node[dot,colorline] at (4,2) {}; 
\node at (1.5,-0.2) {$e$};
\end{tikzpicture}
\qquad
\begin{tikzpicture}[baseline=10]
\draw[->,dashed,thick] (0,0) to (1,0);
\end{tikzpicture}
\qquad
\begin{tikzpicture}[baseline=15]
\draw[thick,colorline,midarrow] (4,-1) -- (3,-1) -- (3,0.07)  -- (1,0.07) -- (1,2) -- (0,2) -- (0,-1) -- (1,-1) -- (1,0) -- (2,0) -- (2,1) -- (3,1) -- (3,2) -- (4,2);
\node[dot,colorline] at (4,-1) {}; \node[dot,colorline] at (4,2) {}; 
\node at (1.5,0.2) {$e$};
\end{tikzpicture}
\caption{Positive twisting of a line.}
\end{figure}

\subsubsection{Gluing}
\label{sec:Gluing}
Given $\ell_1\in \mL_1$ with $u(\ell_1)=v(\ell_1)$, we define
$$
\bigtriangleup \ell_1 = \ell_1\in \mL_0.
$$
Given $\ell_1,\ell_2\in \mL_1$ with $v(\ell_1) = u(\ell_2)$, we define 
$$
\ell_1 \bigtriangleup \ell_2 = [\ell_1 \ell_2] \in \mL_1.
$$
There is an additional type of gluing,
called $\mR$-gluing, 
which is  relevant in the real case but not in the complex case, defined as following.
Given $\ell_1,\ell_2\in \mL_1$ with $u(\ell_1) = u(\ell_2)$, we define 
$$
\ell_1 \bigtriangleup \ell_2 = [\ell_1^{-1} \ell_2] \in \mL_1.
$$
If $v(\ell_1) = v(\ell_2)$, we define 
$$
\ell_1 \bigtriangleup \ell_2 = [\ell_1 \ell_2^{-1}] \in \mL_1.
$$

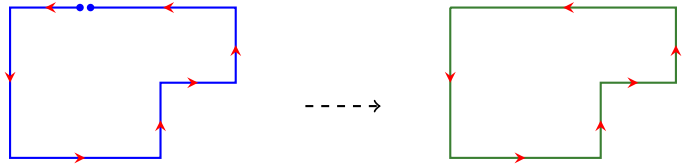
\begin{figure}[h]
\begin{tikzpicture}
\draw[thick,colorline,midarrow] (-0.07,0) -- (-1,0)  -- (-1,-2)  -- (1,-2) -- (1,-1) -- (2,-1) -- (2,0) -- (0.07,0);
\node[dot,colorline] at (-0.07,0) {};
\node[dot,colorline] at (0.07,0) {};
\end{tikzpicture}
\qquad
\begin{tikzpicture}[baseline=-20]
\draw[->,dashed,thick] (0,0) to (1,0);
\end{tikzpicture}
\qquad
\begin{tikzpicture}
\draw[thick,colorloop,midarrow]  (-1,0)  -- (-1,-2)  -- (1,-2) -- (1,-1) -- (2,-1) -- (2,0) -- (-1,0);
\end{tikzpicture}
\caption{Gluing an open line into a loop.}
\label{fig:Gluing-line-into-loop}
\end{figure}

\begin{figure}[h]
\begin{tikzpicture}[baseline=-10]
\draw[thick,colorline,midarrow] (0,0) -- (1,0) -- (1,1) -- (2,1)  -- (2,-1);
\node[dot,colorline] at (0,0) {};
\node[dot,colorline] at (2,-1) {};
\node at (1.8,-0.5) {$e$};
\end{tikzpicture}
$\quad\bigtriangleup\quad$
\begin{tikzpicture}[baseline=5]
\draw[thick,colorline,midarrow] (0,0) -- (0,1) -- (1,1) -- (1,0) -- (2,0);
\node[dot,colorline] at (0,0) {};
\node[dot,colorline] at (2,0) {};
\node at (0.2,0.5) {$e$};
\end{tikzpicture}
\qquad
\begin{tikzpicture}[baseline=-15]
\draw[->,dashed,thick] (0,0) to (1,0);
\end{tikzpicture}
\qquad
\begin{tikzpicture}[baseline=-15]
\draw[thick,colorline,midarrow]  (0,0) -- (1,0) -- (1,1) -- (2,1) -- (2,0) -- (3,0) -- (3,-1) -- (4,-1);
\node[dot,colorline] at (0,0) {};
\node[dot,colorline] at (4,-1) {};
\end{tikzpicture}
\caption{Gluing two lines into one line.}
\end{figure}
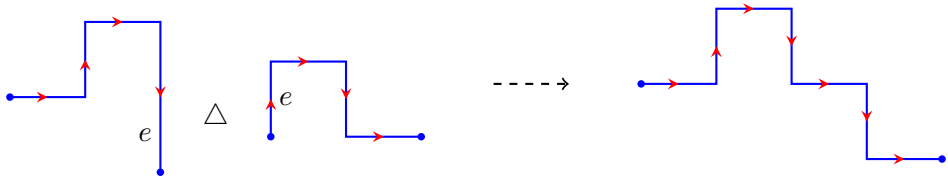

\subsubsection{Extension}
\label{sec:Extension}

Just like deformation is defined by merging a string with a plaquette,
extension of a line $\ell$ is defined by gluing an edge with $\ell$.
More precisely:
 
Given a line $\ell_1$ and an edge $e$ where $v(\ell_1)=u(e)$, we define the  extension by
\begin{equ}[e:extension1]
 \ell_1\bigtriangleup e = [\ell_1 e] \in \mL_1.
\end{equ}
Given a line $\ell_1$ and an edge $e$ where $u(\ell_1)=v(e)$, we define the  extension by
\begin{equ}[e:extension2]
e \bigtriangleup \ell_1 = [e \ell_1] \in \mL_1.
\end{equ}

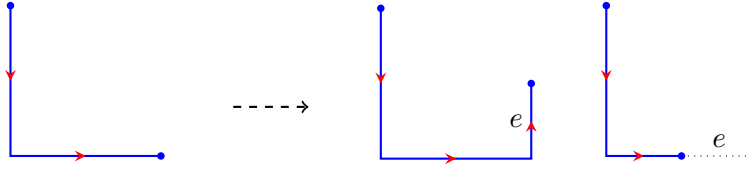
\begin{figure}[h]
\begin{tikzpicture}
\draw[thick,colorline,midarrow] (0,2) -- (0,0)  -- (2,0);
\node[dot,colorline] at (0,2) {}; \node[dot,colorline] at (2,0) {};
\end{tikzpicture}
\qquad
\begin{tikzpicture}[baseline=-20]
\draw[->,dashed,thick] (0,0) to (1,0);
\end{tikzpicture}
\qquad
\begin{tikzpicture}
\draw[thick,colorline,midarrow]  (0,2) -- (0,0)  -- (2,0) -- (2,1);
\node[dot,colorline] at (0,2) {}; \node[dot,colorline] at (2,1) {};
\node at (1.8,0.5) {$e$};
\end{tikzpicture}
\qquad
\begin{tikzpicture}
\draw[thick,colorline,midarrow]  (0,2) -- (0,0) -- (1,0);
\node[dot,colorline] at (0,2) {}; \node[dot,colorline] at (1,0) {};
\draw[dotted] (1,0) -- (2,0);
\node at (1.5,0.2) {$e$};
\end{tikzpicture}
\caption{Some examples of extension at the end point $v(l_1)$.}
\end{figure}

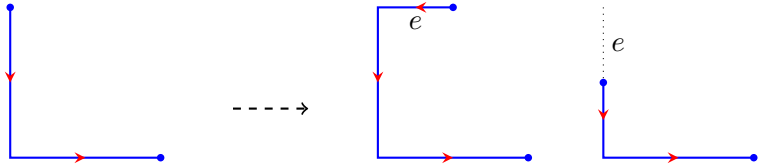
\begin{figure}[h]
\begin{tikzpicture}
\draw[thick,colorline,midarrow] (0,2) -- (0,0)  -- (2,0);
\node[dot,colorline] at (0,2) {}; \node[dot,colorline] at (2,0) {};
\end{tikzpicture}
\qquad
\begin{tikzpicture}[baseline=-20]
\draw[->,dashed,thick] (0,0) to (1,0);
\end{tikzpicture}
\qquad
\begin{tikzpicture}
\draw[thick,colorline,midarrow] (1,2) -- (0,2) -- (0,0) -- (2,0);
\node[dot,colorline] at (1,2) {}; \node[dot,colorline] at (2,0) {};
\node at (0.5,1.8) {$e$};
\end{tikzpicture}
\qquad
\begin{tikzpicture}
\draw[thick,colorline,midarrow]  (0,1)-- (0,0)  -- (2,0);
\node[dot,colorline] at (0,1) {}; \node[dot,colorline] at (2,0) {};
\draw[dotted] (0,2) -- (0,1);
\node at (0.2,1.5) {$e$};
\end{tikzpicture}
\caption{Some examples of extension at the beginning point $u(\ell_1)$.}
\end{figure}

\subsubsection{Sets of operations}

Now we extend the definitions of the operations to string sequences. 
Let $s$ and $s'$ be two string sequences.

We say that the loop sequence $s'$ is obtained from splitting the loop sequence $s$ if exactly two components of $s'$ arise from splitting a single string in $s$. 

We say that the loop sequence $s'$
is a deformation / twisting / extension of $s$
 if $s$ is obtained by deforming / twisting / extending one of the  component  of $s$.

 We say that $s'$ is a merger of $s$ if $s'$ is obtained by merging two strings of $s$.
We say that $s'$ is a switching of $s$
if $s'$ is obtained by switching two lines of $s$.
 
 We say that $s'$ is a breaking of $s$,
  if exactly two line components of $s'$ arise from breaking a  line in $s$,
 or exactly one line component of $s'$ arises from breaking a  loop in $s$.
 
 We say that  $s'$ is obtained from gluing  $s$,
 if exactly one loop component of $s'$ arises from gluing a single line in $s$,
or exactly one line component of $s'$ arises from gluing two lines in $s$.
 
We say that $s'$ is an expansion of $s$,
if exactly two  components of $s'$ arise from  expanding a component in $s$ by a plaquette or an edge,
or exactly $n+1$  components of $s'$ arise from  expanding a component in $s$ 
by $n$ null-edges.
For $e\in E$, $x\in \Lambda$, define 
\begin{align*}
 \mD_{e}(s) & = \{s' \;:\; \mbox{$s'$ is a deformation of $s$ at $e$ or $e^{-1}$}\}
 \\
 \mB_{e}(s) & = \{s' \;:\; \mbox{$s'$ is a breaking of $s$ at $e$ or $e^{-1}$}\}
\\
 \mS_{e}(s) & =  \{s' \;:\; \mbox{$s'$ is a  splitting of $s$ at $e$ or $e^{-1}$}\}
\\
 \mT_{e}(s) & =  \{s' \;:\; \mbox{$s'$ is a twisting of $s$ at $e$ or $e^{-1}$}\}
\\
 \mX_{e}(s) & =  \{s' \;:\; \mbox{$s'$ is a switching of $s$ at $e$ or $e^{-1}$}\}
\\
 \mM_{e}(s)& = \{s' \;:\; \mbox{$s'$ is a merger of $s$ at $e$ or $e^{-1}$}\}
\\
 \mG_x(s) & =  \{s' \;:\; \mbox{$s'$ is a gluing (but not $\mR$-gluing) of $s$ at $x$}\}
 \\
 \mG^{\mR}_x(s) & =  \{s' \;:\; \mbox{$s'$ is an $\mR$-gluing of $s$ at $x$}\}
  \\
 \mE_{\text{ext},x}(s) & =  \{s' \;:\; \mbox{$s'$ is an extension of $s$ at $x$}\}
 \\
\mE_{e,\square} (s) & = \{s' \;:\; \mbox{$s'$ is a (first type)  expansion of $s$ at $e$ by a plaquette}\}
\\
\mE_{e,\parallel} (s) & = \{s' \;:\; \mbox{$s'$ is a (second type) expansion of $s$ at $e$ by an edge}\}
\\
\mE_{x}(s) & = \{s' \;:\; \mbox{$s'$ is an expansion of $s$ at $x$ by an edge}\}
\\
\mE^k_{x}(s) & =  \{s' \;:\; \mbox{$s'$ is an expansion of $s$ at $x$ by $k$ null-lines}\}
\end{align*}   
 For each of the above sets $\mO_{e}(s)$ where $\mO  \in \{\mD,\mB,\mS,\mT,\mX,\mM,\mE_{e,\parallel},\mE_{e,\square}\}$, we can decompose them into a disjoint union based on whether the operation is positive or negative, and we denote this as $\mO_{e}(s)=:\mO_{e}^{-}(s)\cup \mO_{e}^{+}(s)$.  Furthermore, we define two sets $\mM^{+}_{e,U}(s) \subset \mM^{+}_e(s)$ and $\mM^{-}_{e,U}(s) \subset \mM^{-}_e(s)$:
  
(1) The set $\mM^{+}_{e,U}(s)$ consists of positive mergers resulting from an edge $e$ appearing in both of the two merged strings (namely, from the first identity of \eqref{e:merger00-1} or the first identity of \eqref{e:merger01-1}).

(2) The set $\mM^{-}_{e,U}(s)$ consists of negative mergers where an edge $e$ occurs in one string and $e^{-1}$ in the other (namely, from the second identity of \eqref{e:merger00-2} or the last identity of \eqref{e:merger01-2}). 

We set $\mM^{+}_{e,\backslash U}(s) \eqdef \mM^{+}_e(s)\backslash \mM^{+}_{e,U}(s)$ and $\mM^{-}_{e,\backslash U}(s) \eqdef \mM^{-}_e(s)\backslash \mM^{+}_{e,U}(s)$. 

Also we define two sets $\mX^{+}_{e,U}(s) \subset \mX^{+}_e(s)$ 
and $\mX^{-}_{e,U}(s) \subset \mX^{-}_e(s)$:

(1) The set $\mX^{+}_{e,U}(s)$ consists of positive switchings resulting from an edge $e$ appearing in both of the two switched lines  (namely, from the first identity of \eqref{e:switching1}).

(2) The set $\mX^{-}_{e,U}(s)$ consists of negative switchings where an edge $e$ occurs in one line and $e^{-1}$ in the other (namely, from the second identity of \eqref{e:switching2}). 

We set $\mX^{+}_{e,\backslash U}(s) \eqdef \mX^{+}_e(s)\backslash \mX^{+}_{e,U}(s)$ and $\mX^{-}_{e,\backslash U}(s) \eqdef \mX^{-}_e(s)\backslash \mX^{+}_{e,U}(s)$. 
 Remark that for deformation, breaking, splitting, twisting, switching, merger,
  in choosing the words {\it positive} versus {\it negative} (which seems 
 arbitrary a priori), 
   there is an interesting coincidence:
1) For all the {\it positive} operations, $e$ or $e^{-1}$ appears twice, whereas 
 for all the {\it negative} operations, neither  $e$ nor $e^{-1}$ appears.
See \eqref{e:splitting1}, \eqref{e:splitting2}, 
\eqref{e:merger00-1}, \eqref{e:merger00-2}, \eqref{e:merger01-1}, \eqref{e:merger01-2}, 
 \eqref{e:switching1},  \eqref{e:switching2},
  \eqref{e:breaking1},  \eqref{e:breaking2},
   \eqref{e:twisting1},     \eqref{e:twisting2}.
2) In the Makeenko--Migdal equations,
all the  {\it positive} operations on the RHS come with negative coefficients,
and 
all the  {\it negative} operations on the RHS come with positive coefficients.

 We use the following shorthand notation (which we borrow from \cite{Cao2025area}): for each operation $\mO$ above with both a positive and negative version and any constant $C$, we write
\begin{equation}
\mp C\sum_{s' \in \mO^\pm_{e}(s)}(\cdots) 
\eqdef -C\sum_{s' \in \mO_{e}^{+}(s)}(\cdots)+C\sum_{s' \in \mO_{e}^{-}(s)}(\cdots).
\end{equation}
\subsubsection{Remarks on the sets}  \label{sec:Remarks} 

In the above definitions, $e\in E$ and its inverse may appear in $s$
multiple times, i.e. in multiple locations of $s$.
The set $\mD^-_e(s)$ then contains the negative deformations 
at all these locations, and the same for the other sets. For example,
if $e$ or $e^{-1}$ appear at locations of $x_1,\cdots,x_r$ in a string collection $s$,
one can write a disjoint union
\begin{equ}[e:D-union]
 \mD^{-}_e(s) 
=
\cup_{i=1}^r \{\mbox{negative deformations of $s$ at location $x_i$}\}
\end{equ}
Similarly,
\begin{equ}[e:S+union]
 \mS^{+}_e(\ell) 
=
\cup_{i\neq j} \{\mbox{positive splittings  of $\ell$
at locations $x_i,x_j$}\}
\end{equ}
where, by definition of positive splitting \eqref{e:splitting1}, 
for fixed $i,j$ with $i\neq j$, 
the set 
$\{\cdots\}$ on the RHS 
 is non-empty if 
either we have $e$ at both locations $x_i,x_j$ or $e^{-1}$ at both locations $x_i,x_j$.
Also,
the union of  $\mD^{-}_{e}(s)$, for example,
over all $e$  in $s$,
is the set $\mD^{-}(s)$ in \cite{Cha,SSZloop}. 
   
Moreover,
in the above definitions, the lattice site $x$ may also appear in $s$
multiple times, namely,
a line in $s$ may have both its beginning and  ending points equal to $x$,
or, multiple lines in $s$ may share $x$ as  their beginning or ending point,
or both situations may occur simultaneously. 
The set $ \mE_{\text{ext},x}(s)$  then consists of all  extensions taken 
at  the beginning or ending points of these lines.
The same is understood  for $\mE_{\text{exp},x}(s)$ and $\mE_{\text{exp},x}^k(s)$.

For example,
if a line $\ell \in \mL_{1}$ has 
both its beginning point and ending point equal to $x\in \Lambda$ (see  Figure~\ref{fig:Gluing-line-into-loop}),
writing the
beginning point $y$
and the ending point $z$ (namely $x=y=z$), we have a disjoint union
\begin{equation}\label{e:Exp-union}
\mE_{\text{exp},x}(\ell)
= \{\mbox{expansions of $\ell$ at $y$}\}
\cup  \{\mbox{expansions of $\ell$ at $z$}\}\;.
\end{equation}

\br\label{rem:double-counting}
As  in \cite[Sec~2.2]{Cha} (below definitions of $\mS^-,\mS^+$ therein),
and also \cite{SSZloop},
	if a loop $\ell$ has a splitting at $x$ and $y$, then it also has a splitting at $y$ and $x$, and they should be counted as two distinct splittings of $\ell$. 
The same applies to  twisting and we will keep this in mind below.
\er

\section{Proof via  It\^o's formula}
\label{sec:Ito}
For a string $\ell$, we define $\phi(\ell):= \E W_{\ell}$ and for a collection of strings $s=(\ell_{1},\cdots, \ell_{k})$ we define
\begin{equation}
W_s \eqdef \prod_{i=1}^{k}W_{\ell_{i}},
\qquad
\phi(s) \eqdef \E W_s.
\end{equation}
Here $W_\ell$ is the Wilson loop or line observable defined as in Definitions~\ref{def:Wloops} and \ref{def:Wlines};
for a null-line at $x$ it is understood as $\Phi_x^*\Phi_x$.
Note that as we already mentioned in Remark~\ref{rem:2},
the above definition of $\phi(s)$ 
is different from \cite{Cha,SSZloop} since we do not normalize  it by $1/N^k$.
This will make our notation lighter, and it will be easy
to find the loop equations for the normalized $\phi$ by post-processing on our un-normalized version.

The general idea to prove 
Makeenko--Migdal equations for $\phi(s)$ is as follows. 
We differentiate $W_s$ and apply It\^o formula 
to the conditional SDE \eqref{SDE} for $Q_e$ with a fixed edge $e$
or the conditional SDE \eqref{SDE1} for $\Phi_x$ a fixed site $x$.
These SDEs have drift terms and martingale terms.
So $\dif W_s$ will be expressed as terms involving drifts, martingales, and It\^o corrections. The  It\^o corrections arise from co-variations of martingales in the SDEs,
since $e$ or $x$ may appear multiple times within a string or between different strings. 
Assuming the SDE is in stationarity,
taking expectation kills the $\dif W_s$ and martingale terms, resulting in the desired loop equations.

We make a slight departure in exposition compared to our prior work \cite{SSZloop}.  Rather than consider the most general case of collections of strings from the start, we proceed from the specific to the more general.  In fact, in Section \ref{sec:simple} we start with simple loops (and lines), which are already very interesting  on their own, carrying important information  about the YMH measure on both small and large scales.  Unfortunately, due to the operations that arise, we cannot obtain a closed system within the simple Wilson strings, forcing us to consider geometrically more complicated observables with various self-crossings.  These are considered in Section \ref{sec:nonSimple} and lead to additional operations, but still not a closed system.  Finally, by considering collections of strings in Section \ref{sec:collections}, we obtain a closed system and establish the most general version of our result.

\subsection{Simple strings} \label{sec:simple}
 
A string is called simple if it has no repeated edges \footnote{We also exclude that both $e$ and $e^{-1}$ appear in the string.}, and the beginning and ending points are distinct in the case that it is a line.
We first demonstrate our method with simple strings. 
For simple strings, all the operations that arise are a result of the gradients of $\mathcal{S}_{\YMH}$ (recall \eqref{e:SSV}), which are given in \eqref{e:grad_e}, \eqref{e:grad_x}.  

\begin{lemma}\label{lem:simple-e}
	Let $G\in \{SO(N),U(N), SU(N)\}$. 
	If $\ell$ is a simple string that contains the edge $e\in E_\Lambda$,  then 
	\begin{align}
		-2c_\mfg \phi(\ell)
		=\mp\beta \sum_{\ell' \in \mD^\pm_e(\ell) } \phi(\ell')
		\mp\kappa \sum_{s \in \mB^\pm_e(\ell) } \phi(s)\mp\beta \nu \sum_{s \in \mE^\pm_{e,\square}(\ell) } \phi(s)\mp\kappa\nu \sum_{s \in \mE^\pm_{e,\parallel}(\ell) } \phi(s)
		\label{e6}.
	\end{align}
Here $c_\mfg$ is as in \eqref{e:c_g} and $\nu$ is as in \eqref{e:lambda-mu-nu}. \end{lemma}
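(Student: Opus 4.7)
My plan is to apply It\^o's formula to $W_\ell$ along the conditional Langevin SDE \eqref{SDE} for $Q_e$ (with all other $Q_{e'}$ and $\Phi$ held fixed) and exploit stationarity of the conditional measure \eqref{e:LMHY-com}. The decisive simplification from the simplicity hypothesis is that $e$ occurs at exactly one location in $\ell$, so $W_\ell$ is \emph{linear} in $Q_e$: for a loop $\ell = aeb$, cyclicity of the trace gives $W_\ell = \tr(Q_e X)$ with $X = Q_b Q_a$; for a line $\ell = aeb$, $W_\ell = \Phi_{u(\ell)}^* Q_a Q_e Q_b \Phi_{v(\ell)} = \tr(Q_e X)$ with $X = Q_b \Phi_{v(\ell)} \Phi_{u(\ell)}^* Q_a$. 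In both cases $X$ is independent of $Q_e$, so no second-order It\^o term is produced, and $dW_\ell = \tr(\mathcal{D}_e X)\,dt + \tr(d\mathcal{M}_e X)$.

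Taking expectation under the stationary conditional measure kills the martingale and the left-hand side, so Lemma~\ref{lem:DM}(1) gives
\begin{equation*}
-c_\mfg\,\phi(\ell) = \E[\tr(\nabla_e\cS_1 X)] + \E[\tr(\nabla_e\cS_2 X)].
\end{equation*}
The next step is to substitute the explicit formulas \eqref{e:grad_e} and identify each algebraic term with a combinatorial operation from Section~\ref{sec:Operations}. For $SO(N)$ and $U(N)$, the $+Q_p Q_e X$ piece of $\nabla_e\cS_1 = -\tfrac{\beta}{2}\sum_{p\succ e}(Q_p - Q_p^*)Q_e$ is the Wilson observable of a positive deformation of $\ell$ by $p$ (since $e$ appears in both $p$ and $\ell$), while $-Q_p^* Q_e X$ yields a negative deformation (the $Q_{e^{-1}}$ inside $Q_p^*$ cancels $Q_e$). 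Similarly, $\nabla_e\cS_2 = \tfrac{\kappa}{2}(\Phi_x\Phi_y^* - Q_e\Phi_y\Phi_x^* Q_e)$ produces negative breakings (from $+\Phi_x\Phi_y^*$, which disconnects $\ell$ at $e$) and positive breakings (from $-Q_e\Phi_y\Phi_x^*Q_e$, which re-inserts $e$ at the new Higgs endpoints as in \eqref{e:breaking2}). In the $SU(N)$ case the additional $-\tfrac{1}{N}\tr(\cdots)I_N$ trace corrections factor out as $\tr(Q_p - Q_p^*)\cdot W_\ell$ and as $(\Phi_y^* Q_e^*\Phi_x - \Phi_x^* Q_e \Phi_y)\cdot W_\ell$; these are precisely Wilson observables for $\ell$ together with a separate plaquette or edge, matching the expansion operations $\mE^\pm_{e,\square}$ and $\mE^\pm_{e,\parallel}$ with coefficient $\nu = 1/N$.

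Multiplying through by $2$ absorbs the $\tfrac{1}{2}$ factors from $\nabla_e\cS_i$ and yields the stated left-hand side $-2c_\mfg\,\phi(\ell)$. The $\mp$ sign convention is dictated by the signs in $(Q_p - Q_p^*)$ and $(\Phi_x\Phi_y^* - Q_e\Phi_y\Phi_x^* Q_e)$: positive-type operations (in which $e$ or $e^{-1}$ appears twice) inherit a minus sign and negative-type operations (in which $e$ is cancelled) inherit a plus sign. The principal difficulty is therefore combinatorial rather than analytic: one must correctly match each algebraic expression with the right operation and its $\pm$ variant, with particular care for the $SU(N)$ trace corrections. The simplicity hypothesis is essential here, as it is what makes $W_\ell$ linear in $Q_e$ and thereby eliminates any quadratic-variation contribution from Lemma~\ref{lem:DM}(2); operations that require $e$ to appear multiple times in $\ell$—merger, switching, splitting, twisting—are consequently absent from this lemma and will only emerge in the non-simple case treated subsequently.
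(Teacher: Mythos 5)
Your proposal is correct and follows essentially the same route as the paper: It\^o's formula for the conditional dynamic at the single edge $e$, linearity of $W_\ell$ in $Q_e$ from simplicity (hence no quadratic-variation term), stationarity to kill $\E\,\dif W_\ell$ and the martingale, and then term-by-term identification of $\nabla_e\cS_1$, $\nabla_e\cS_2$ and the $SU(N)$ trace corrections with deformations, breakings, and the two expansions. The only cosmetic difference is that you uniformly rewrite the line case as $\tr(Q_eX)$ with $X=Q_b\Phi_{v(\ell)}\Phi_{u(\ell)}^*Q_a$, which is the same device the paper invokes later for general strings.
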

Note that the sets $\mB^{+}_e(\ell)$ and $\mB^{-}_e(\ell)$
are now  singletons in \eqref{e6}. 

\begin{proof}
We consider the stationary dynamic obtained from conditioning on all values of $\Phi$ and all values of $Q$ except for $Q_{e}$.  
We will obtain \eqref{e6} as an immediate consequence of the following: for any simple string $\ell$ and $G=SO(N), U(N)$
	\begin{equation}
		\dif W_{\ell} 
		-c_\mfg W_{\ell}\dif t
		=\Big[\mp\frac\beta2 \sum_{\ell' \in \mD_{e}(\ell) }W_{\ell'}\mp\frac\kappa2\sum_{s \in \mB_e(\ell)}W_{s}\Big]\dif t+\dif M_{\ell} \label{e7}, 
	\end{equation}
	where $M_{\ell}$ is a martingale.  To establish \eqref{e7}, we note that due to the conditioning, the differential falls only on the variable $Q_{e}$ in $W_\ell$, which appears exactly once since the string is simple.  After inserting the dynamic in Lemma~\ref{lem:DM}, it suffices to calculate the result obtained from replacing the sole instance of $Q_{e}$ by $\nabla_{e}\mathcal{S}_{\text{YMH}}(Q,\Phi)$ within $W_{\ell}$.  
	
	If $\ell$ is a loop, then without loss of generality we may present it as $\ell=ae$ for some path $a$ and correspondingly $W_{\ell}=\text{Tr}(Q_{a}Q_{e})$.  As in \cite{SSZloop}, \footnote{with proper adjustments according to Remark~\ref{rem:2}} it holds
	\begin{equation}
		\text{Tr}\big ( Q_{a}\nabla_{e}\mathcal{S}_{1} \big )
		=\mp\frac{1}{2}\beta \sum_{\ell' \in \mD_e(\ell)}W_{\ell'}
		\label{e4}.
	\end{equation}
	In addition, by \eqref{e:grad_e} and recalling that 
	$eae\in \mL_1$ is the positive breaking of $\ell\in \mL_0$ defined in \eqref{e:breaking2}
	and $a\in \mL_1$ is the negative breaking of $\ell\in \mL_0$ defined in \eqref{e:breaking1},
	it holds
	\begin{align}
		\text{Tr}\big ( Q_{a}\nabla_e\mathcal{S}_{2} \big )
		=-\frac\kappa2    \text{Tr}\big(Q_{a}Q_{e} \Phi_{w}\Phi_{z}^{*}Q_{e} -Q_{a}\Phi_{z} \Phi_{w}^{*}  \big) 
		=-\frac\kappa2  W_{eae }+\frac\kappa2 W_{a}=
		\mp\frac\kappa2 \sum_{s \in \mB_e(\ell)}W_{s}, \nonumber
	\end{align}
	for $e=(z,w)$. 
	
	Similarly, if $\ell$ is a line containing $e$ with endpoints $x \neq y$
	we may present it as $\ell=aeb$ and $W_{\ell}=\Phi_{x}^{*}Q_{a}Q_{e}Q_{b}\Phi_{y}$.  
	The contribution from $\nabla_{e}\mathcal{S}_{1}$ also gives \eqref{e4}.
	Furthermore, recalling that 
	$(ae,eb)\in \mL_1\times \mL_1$ is the positive breaking of $\ell\in \mL_1$ defined in \eqref{e:breaking2}
	and $(a,b)\in \mL_1\times \mL_1$ is the positive breaking of $\ell\in \mL_1$ defined in \eqref{e:breaking1},
	we see that
	\begin{align}
		\text{Tr}\big (\Phi_{x}^{*} Q_{a} \, \nabla_e\mathcal{S}_{2}\,Q_{b}\Phi_{y} \big )
		&=-\frac\kappa2    \text{Tr}\big(\Phi_{x}^{*} Q_{a}Q_{e} \Phi_{w}\Phi_{z}^{*}Q_{e}Q_{b}\Phi_{y} -\Phi_{x}^{*}Q_{a} \Phi_{z}\Phi_{w}^{*}Q_{b}\Phi_{y}  \big) \nonumber \\
		&=-\frac\kappa2 W_{ae}W_{eb}+\frac\kappa2 W_{a}W_{b}=
		\mp\frac\kappa2 \sum_{s \in \mB_e(\ell)}W_{s}\label{e5},
	\end{align}
	for $e=(z,w)$. 
	Based on the above equalities, we readily obtain \eqref{e7} by It\^{o}'s formula applied to the conditional dynamic.
	Taking expectation w.r.t. the dynamics for the conditional probability measure $\mu_{Q_{\backslash e},\Phi}$ and then  w.r.t. $\nu_e$ from \eqref{mea:dis}, we obtain \eqref{e6} for $G=SO(N)$ and $U(N)$. 
	
	For $G=SU(N)$, where $\nu=\frac1N$, the result follows the same line. The only modification lies in the trace component within the terms $\nabla_e \cS_1$ and $\nabla_e\cS_2$ in \eqref{e:grad_e}. 
As in \cite{SSZloop} the contribution from $\nabla_e\cS_1$ gives the first   type of expansion (by a plaquette)
 described in Section~\ref{sec:Expansion}. 
 Similarly by  \eqref{e:grad_e} the contribution from $\nabla_e\cS_2$,
say in the case of Wilson line  $W_{\ell}=\Phi_{x}^{*}Q_{a}Q_{e}Q_{b}\Phi_{y}$, gives
\begin{align*}
	&\Phi_{x}^{*}Q_{a} \Big(
	\frac{-\kappa}{2N} \tr (\Phi_z\Phi_w^* Q_e^* - Q_e \Phi_w \Phi_z^*) I_N Q_e
	\Big) Q_{b}\Phi_{y}
	\\
	&=
	\frac{-\kappa}{2N} \tr (\Phi_z\Phi_w^* Q_e^* - Q_e \Phi_w \Phi_z^*) 
	\Big(\Phi_{x}^{*}Q_{a}  Q_e
	Q_{b}\Phi_{y}\Big) 
	=
	\mp\frac\kappa{2N} \sum_{s \in \mE_{e,\parallel}(\ell)}W_{s}
	\end{align*}
for $e=(z,w)$, leading to expansions by edge-lines.
\end{proof}
In addition to performing operations on an instance of $Q_{e}$ in the string, we can also perform operations on an instance of $\Phi_{x}$ in a line.  To this end we have the following
\begin{lemma}\label{lem:simple-x}
Let $\ell \in \mL_{1}$ with an endpoint $x \in \Lambda$. 

For $(G,M)\in \{(SO(N),\mR^N),(U(N),\mC^N),(SU(N),\mC^N)\}$,
\begin{align*}
-c_0  \phi(\ell)
=
\kappa \sum_{\ell' \in \mE_{\text{ext},x}(\ell) }  \phi(\ell') 
+
\sum_{k=1}^n c_k
\sum_{s \in \mE^k_{x}(\ell)}  \phi(s).
\end{align*}

For $(G,M)\in \{(SO(N),\mS^{N-1}),(U(N),\mS^{2N-1}), (SU(N),\mS^{2N-1})\}$,
\begin{align*}
c_M \phi(\ell)
=
\kappa \sum_{\ell' \in \mE_{\text{ext},x}(\ell) }  \phi(\ell') 
-\frac\kappa{2} \sum_{s\in \mE_{x}(\ell) }  \phi(s).
\end{align*}
Here $c_M$ is defined in \eqref{e:c_S}, $c_0$ is defined in \eqref{e:grad_x}.
\end{lemma}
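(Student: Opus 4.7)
The plan is to adapt the proof of Lemma~\ref{lem:simple-e} to the conditional Langevin dynamic at the site $x$, using \eqref{SDE1} together with the drift and martingale identities of Lemma~\ref{lem:DMx}. The central observation is that, since $\ell$ is simple with endpoint $x$, the variable $\Phi_x$ appears exactly once in $W_\ell$, so $W_\ell$ is linear in the real coordinates of $\Phi_x$; its Hessian in $\Phi_x$ vanishes identically and none of \eqref{e:martin-x1}-\eqref{e:martin-x2} contribute any It\^o correction. Without loss of generality take $x=u(\ell)$ and write $W_\ell=\Phi_x^* Q_\ell\Phi_y$ with $Q_\ell := Q_{e_1}\cdots Q_{e_n}$ and $y=v(\ell)$. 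Then It\^o's formula reduces to $dW_\ell=\mathcal{D}_x^* Q_\ell\Phi_y\,dt + d\mathcal{M}_x^* Q_\ell\Phi_y$, and stationarity of the conditional measure \eqref{e:LMHY-com-1} forces $\E[\mathcal{D}_x^* Q_\ell\Phi_y]=0$; the case $x=v(\ell)$ is symmetric and obtained by differentiating on the right.

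For the flat case ($c_M=0$), one reads off $\mathcal{D}_x=\nabla_x\cV+\nabla_x\cS_2$ from \eqref{e:grad_x}. Pairing $\nabla_x\cV=\sum_k c_k\Phi_x|\Phi_x|^k$ with $Q_\ell\Phi_y$ yields $\sum_k c_k|\Phi_x|^k W_\ell$: the $k=0$ piece is moved to the left-hand side to produce $-c_0\phi(\ell)$, while the $k\geq 1$ pieces are identified with null-line expansions in $\mE^k_x(\ell)$ via the observable $W_{\ddots_x}=\Phi_x^*\Phi_x$ (using that $c_k=0$ for odd $k$ since $V$ is a polynomial in $|\Phi|^2$). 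The Higgs piece $\kappa\sum_{e=(x,z)}Q_e\Phi_z$ conjugates to $\kappa\sum_e\Phi_z^* Q_{e^{-1}}$ via $Q_e^*=Q_{e^{-1}}$, and pairing with $Q_\ell\Phi_y$ produces $\kappa\sum_e W_{e^{-1}\ell}$; as $e$ ranges over the $2d$ edges in $E_\Lambda$ starting at $x$, the inverses $e^{-1}$ enumerate precisely the prepending extensions $\mE_{\text{ext},x}(\ell)$ defined in \eqref{e:extension2}.

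For the sphere case, $\nabla_x\cV=0$ and $\mathcal{D}_x=-c_M\Phi_x+\nabla_x\cS_2$ with $\nabla_x\cS_2$ projection-corrected as in \eqref{e:grad_xS}. The first term yields $-c_M W_\ell$, moved to the left as $c_M\phi(\ell)$; the bare part of $\nabla_x\cS_2$ again produces $\kappa\sum_{\ell'\in\mE_{\text{ext},x}(\ell)}\phi(\ell')$ exactly as in the flat case. The new feature is the projection $-\kappa\sum_e\Re(\Phi_x^* Q_e\Phi_z)\Phi_x$, which pairs to $-\kappa\sum_{e=(x,z)}\Re(W_e)W_\ell$. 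Using $\Re(W_e)=\tfrac12(W_e+W_{e^{-1}})$ in the complex case (and $W_e=W_{e^{-1}}$ trivially in the real case), the $\{e,e^{-1}\}$-pairing over $e=(x,z)$ enumerates each of the $4d$ directed edges incident to $x$ exactly once, producing $-\tfrac\kappa2\sum_{s\in\mE_x(\ell)}\phi(s)$ by the definitions \eqref{e:expansion1e}-\eqref{e:expansion2e}.

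The only real subtlety is combinatorial bookkeeping: matching the summation convention $\sum_{e=(x,z)\in E_\Lambda}$ (one orientation per neighbour) to the enumerations of $\mE_{\text{ext},x}(\ell)$ and $\mE_x(\ell)$ (which include both orientations at $x$), and correctly producing the factor of $\tfrac12$ from the $\Re$ projection in the sphere case. As in Lemma~\ref{lem:simple-e}, simplicity of $\ell$ is what kills all martingale brackets and reduces the identity to a deterministic drift computation; the full force of Lemma~\ref{lem:DMx}(2) is needed only when this hypothesis is relaxed in Section~\ref{sec:nonSimple}.
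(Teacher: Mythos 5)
Your proposal is correct and follows essentially the same route as the paper's proof: apply It\^o's formula to the conditional dynamic \eqref{SDE1} at $x$, note that simplicity makes $W_\ell$ linear in $\Phi_x$ so no It\^o correction arises, and read off the extension, null-line expansion, and edge-expansion terms from $\nabla_x\cS_2$ and $\nabla_x\cV$ before killing the drift expectation by stationarity of \eqref{e:LMHY-com-1} and averaging over $\nu_x$. Your extra remarks on the parity of the $c_k$ and on matching the $\sum_{e=(x,z)\in E_\Lambda}$ convention to the enumerations of $\mE_{\text{ext},x}(\ell)$ and $\mE_x(\ell)$ are accurate refinements of bookkeeping the paper leaves implicit.
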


\begin{proof}
	We prove the case $W_{\ell}=\Phi_{x}^{*}Q_{\ell}\Phi_{y}$
	and the case $W_{\ell}=\Phi_{y}^{*}Q_{\ell}\Phi_{x}$ follows in the same way.
	
	We apply It\^o's formula with respect to the dynamic \eqref{SDE1}  obtained from conditioning on all values of $Q$ and all values of $\Phi$ except at the vertex $x \in \Lambda$

	\begin{align}
		\dif W_{\ell}=\dif \Phi_{x}^{*} Q_{\ell}\Phi_{y} 
		=( \nabla_{x}\mathcal{S}_{2})^{*}Q_{\ell}\Phi_{y}
		+( \nabla_{x}\mathcal{V})^{*}Q_{\ell}\Phi_{y}
		-\1_{\mS}\, c_M W_\ell
		+\dif M_{\ell}
		 			\label{e:Ito-x}
	\end{align}
	for some martingale $M_{\ell}$.
	By \eqref{e:grad_x}, \eqref{e:grad_xS},
	\begin{align*}
		( \nabla_{x}\mathcal{S}_{2})^{*}Q_{\ell}\Phi_{y}
		&=
		\kappa \sum_{e=(x,z) \in E_{\Lambda}}
		\Big[
		(Q_{e}\Phi_{z})^*
		-\1_{\mS}\big(\Re(\Phi_x^* Q_e\Phi_z)\Phi_x\big)^*
		\Big]Q_{\ell}\Phi_{y} 
		\\
		&=
		\kappa \sum_{e=(x,z) \in E_{\Lambda}} 
		 \Big[
		 \Phi_{z}^* Q_{e^{-1}}Q_{\ell}\Phi_{y}
		 - \1_{\mS}
		\Re(\Phi_x^* Q_e\Phi_z)
		 (\Phi_x^* Q_{\ell}\Phi_{y}) \Big] 
		 \\
		 &=
		 \kappa \sum_{e=(x,z) \in E_{\Lambda}} 
		 \Big[
		 W_{e^{-1}\ell}  - \frac12 \1_{\mS} (W_{e}+W_{e^{-1}})W_\ell
		 \Big]
		 \\
		&=\kappa \sum_{\ell' \in \mE_{\text{ext},x}(\ell)}  W_{\ell'}
		-\1_{\mS} \; \frac\kappa{2} \sum_{s \in \mE_{x}(\ell)}  W_s,
	\end{align*}
	where we recall the definition of extension by edges in \eqref{e:extension2}
	and the definition of expansions  by edges in \eqref{e:expansion2e}.
	Moreover, for $M\in \{\mR^{N},\mC^{N}\}$ 
	$$
	( \nabla_{x}\mathcal{V})^{*}Q_{\ell}\Phi_{y}
	=
	\sum_{k=0}^n c_k  |\Phi_x|^k \Phi_x^* Q_{\ell}\Phi_{y}
	=
	c_0 W_\ell+
	\sum_{k=1}^n c_k
	\sum_{\ell' \in \mE^k_{x}(\ell)}  W_{\ell'}
	$$
	where we recall the definition of expansion by null-lines in \eqref{e:expansion-null}. 
	
	 The claim now follows from the law of total expectation, taking expectation first with respect to the conditional law $\mu_{Q,\Phi_{\backslash x}}$ and then  w.r.t. $\nu_x$ from \eqref{mea:dis}, under which $M_{\ell}$ has zero mean, then further averaging the conditioned variables.
\end{proof}

\subsection{General single strings} \label{sec:nonSimple}

Let $\ell$ be a general string, namely, $\ell$ may have repeated edges,
or have two endpoints possibly identical when it is a line.
In these cases new operations (splitting, twisting, gluing) will appear.

We will consider a general situation where an edge $e\in E$ and its inverse $e^{-1}$
appear multiple times in $\ell$, namely, $W_\ell$ has the form of (trace of, if $\ell\in \mL_0$)
$$
(\cdots) Q_{e_1}  (\cdots)  Q_{e_2}  (\cdots)  Q_{e_r} (\cdots) 
$$
where $e_1,\cdots,e_r\in \{e,e^{-1}\}$ 
for $r\in\mN$.


In the result below, we recall from Section~\ref{sec:Remarks} that 
the sets with subscript $e$ take into account 
all the possible locations where $e$ or $e^{-1}$ appear, see e.g. 
\eqref{e:D-union} \eqref{e:S+union}. 

We also define $t_\ell(e)$ to be the number of occurrences of $e$ minus the number of occurrences of $e^{-1}$ 
in the loop $\ell$. 

\begin{lemma}\label{lem:general-e}
	Let $G\in \{SO(N),U(N),SU(N)\}$. 
	Let $\ell\in \mL$ be a string as above
	in which 
	$e$ or $e^{-1}$ appear $r$ times.
	It holds
		\begin{equs}[general-e]
		(-2r\,c_\mfg & +2\nu (r-t_\ell(e)^2))\phi(\ell)
		\\
		=
		&\mp\beta \sum_{\ell' \in \mD^\pm_e(\ell) } \phi(\ell')
		\mp\kappa \sum_{s \in \mB^\pm_e(\ell) } \phi(s)
		\mp 2\mu\sum_{s \in \mT^\pm_e(\ell) } \phi(s)	\mp 2\lambda \sum_{s \in \mS^\pm_e(\ell) } \phi(s)
		\\
		& \mp\beta\nu \sum_{s \in \mE^\pm_{e,\square}(\ell) } \phi(s)
		\mp\kappa \nu \sum_{s \in \mE^\pm_{e,\parallel}(\ell) } \phi(s).
	\end{equs}
Here $c_\mfg$ is as in \eqref{e:c_g} and 
$\lambda,\mu, \nu$ in as in \eqref{e:lambda-mu-nu}.
\end{lemma}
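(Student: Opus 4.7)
The plan is to mirror the proof of Lemma~\ref{lem:simple-e}, now accounting for the multiple occurrences of $Q_e$ and $Q_{e^{-1}}$ in $W_\ell$.  Write $r_+$ for the number of occurrences of $Q_e$ and $r_-=r-r_+$ for those of $Q_{e^{-1}}=Q_e^*$, so that $t_\ell(e)=r_+-r_-$.  I would apply It\^o's formula to $W_\ell$ with respect to the conditional SDE \eqref{SDE} for $Q_e$, obtaining $dW_\ell=(\text{drift})\,dt+dM_\ell$ with one drift piece per occurrence plus one quadratic covariation (cross-term) per unordered pair of occurrences.

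For the drift, Lemma~\ref{lem:DM}(1) gives the same $\mathcal{D}_e=c_\mfg Q_e+\nabla_e\cS_1+\nabla_e\cS_2$ at each occurrence regardless of orientation.  Summing the per-occurrence computation from Lemma~\ref{lem:simple-e} over all $r$ positions then reproduces $r\,c_\mfg W_\ell$ together with the deformation, breaking, and (for $SU(N)$) plaquette/edge-expansion sums with coefficients $\mp\beta/2$, $\mp\kappa/2$, $\mp\beta\nu/2$, $\mp\kappa\nu/2$; here the sets $\mD_e^\pm,\mB_e^\pm,\mE_{e,\square}^\pm,\mE_{e,\parallel}^\pm$ are by definition already ranging over all occurrences of $e^{\pm 1}$ in $\ell$ (Section~\ref{sec:Remarks}).

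For the It\^o corrections, each unordered pair of occurrences gives a cross-term that, by trace cyclicity---and for lines using the identity $\Phi_x^*(\cdots)\Phi_y=\Tr(\Phi_y\Phi_x^*(\cdots))$ to regard the line as a trace---takes the form $\Tr(d\mathcal{M}_{e_i'}\,P\,d\mathcal{M}_{e_j'}\,R)$ for suitable $P,R$.  Applying \eqref{e:martingale1} for same-orientation pairs and \eqref{e:martingale2} for opposite-orientation pairs, I would identify the $\lambda$ term as a positive (resp.\ negative) splitting via \eqref{e:splitting1} (resp.\ \eqref{e:splitting2}), the $\mu$ term as a negative (resp.\ positive) twisting via \eqref{e:twisting1} (resp.\ \eqref{e:twisting2}) using $Q^t=Q^{-1}$ where $\mu\neq 0$, and the $\nu$ term as a copy of $W_\ell$ itself via cyclicity and $Q_e Q_e^*=I$.

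Finally, I would take stationary expectation and collect terms.  There are $\binom{r_+}{2}+\binom{r_-}{2}=\tfrac12(r_+^2+r_-^2-r)$ same-orientation and $r_+ r_-$ opposite-orientation unordered pairs, and accounting for the $-2$ prefactor in \eqref{e:martingale2} the net $\nu$-contribution is $\nu[t_\ell(e)^2-r]W_\ell$; moving this to the left-hand side and multiplying the whole identity by $2$ (to clear the $1/2$'s as in the passage from \eqref{e7} to \eqref{e6}) yields the claimed $2\nu(r-t_\ell(e)^2)\phi(\ell)$.  The factor of two between unordered-pair counting in the It\^o formula and the ordered-pair convention of Remark~\ref{rem:double-counting} for $\mS_e^\pm,\mT_e^\pm$ promotes the per-pair coefficients into the claimed $\mp 2\lambda\sum_{\mS_e^\pm}$ and $\mp 2\mu\sum_{\mT_e^\pm}$.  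The main obstacle is this combinatorial bookkeeping---in particular verifying that the $P^t$ output of the $\mu$ term exactly realises the path reversal defining a twisting, and that the $\nu$ output under both \eqref{e:martingale1} and \eqref{e:martingale2} collapses back to $W_\ell$ regardless of the arrangement of the surrounding paths.
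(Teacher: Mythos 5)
Your proposal is correct and follows essentially the same route as the paper's proof: It\^o's formula for the conditional dynamic at $e$, with each occurrence of $Q_{e^{\pm1}}$ contributing the drift computation of Lemma~\ref{lem:simple-e} (yielding deformation, breaking and, for $SU(N)$, the expansions), each unordered pair of occurrences contributing a cross-variation evaluated via \eqref{e:martingale1}--\eqref{e:martingale2} after passing to trace form with $\Phi_{v}\Phi_{u}^*$, and the $\nu$-terms collapsing to $\pm 2\nu W_\ell$ and summing to $\nu(t_\ell(e)^2-r)W_\ell$ exactly as you compute. Your pair-counting, the double-counting convention of Remark~\ref{rem:double-counting}, and the final multiplication by $2$ all match the paper's bookkeeping.
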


Note that depending on the Lie group, some of $\lambda,\mu, \nu$ above may be $0$. 
Before the proof, we remark that: 

(1) Set $\kappa=0$. By summing \eqref{general-e} over $e$ 
such that 
 $e$ or $e^{-1}$ appear in $\ell$, we recover 
the loop equations in \cite{Cha,SSZloop}, see \cite[Theorem~1.1]{SSZloop} without the merger terms, up to the factor $N$ as discussed in Remark~\ref{rem:no-N} and Remark~\ref{rem:2}.  Note that the sum of $r$ over all $e$ is precisely $|\ell|$ in 
\cite[Theorem~1.1]{SSZloop}.

(2) If $r=1$, we recover Lemma~\ref{lem:simple-e}.

\begin{proof}
We start with $G=SO(N), U(N)$.	Similarly as \eqref{e7}, differentiating $W_\ell$ and substituting the dynamic in Lemma~\ref{lem:DM}, we have 
	\begin{equ}[e:Ito-general]
		\dif W_{\ell} 
		-r \,c_\mfg W_{\ell}\dif t
		= 
		\sum_{i=1}^r
		\mbox{r.h.s. of \eqref{e7} for $e_i$}
		+ \sum_{i=1}^r \sum_{j=i+1}^r 
		\cI_\ell^{ij} \,\dif t
	\end{equ}
	where $\cI_\ell^{ij}$ are the It\^o correction terms, namely, 
	$$
	\cI^{ij}_{\ell}\, \eqdef 
	\begin{cases}
		\Tr \big(Q_{a}\,\dif Q_{e_{i}}\,Q_{b}\,\dif Q_{e_{j}}\,Q_{c}\big),
		& \text{if $\ell \in \mL_{0}$},
		\\[1.5ex] 
		\Phi^*_{u(a)} Q_{a}\,\dif Q_{e_{i}}\,Q_{b}\,\dif Q_{e_{j}}\,Q_{c}\,\Phi_{v(c)},
		& \text{if $\ell  \in \mL_{1}$},
	\end{cases}
	$$
	for some suitable paths $a,b,c$ (which may depend on $i,j$).
	The proof then follows as in \cite{SSZloop},
	with the case $\ell\in \mL_1$ treated mutatis mutandis, 
	which in particular yields the splitting and twisting terms.
	(In fact, writing 
	$ \Phi^*_{u(a)}
	(\cdots)
	\Phi_{v(c)} = \text{Tr}((\cdots)
	\Phi_{v(c)} \otimes \Phi_{u(a)})$, we view $\Phi_{v(c)} \otimes \Phi_{u(a)}$ as a matrix and everything follows  as in \cite{SSZloop}.)
	For completeness and for the reader's convenience, we give the proof for  the case $\ell\in \mL_1$, which also explicitly demonstrates how 
	the operations on lines defined in Sec.~\ref{sec:Operations} show up. 
	
Consider 
the case $e_i=e_j=e\in E$. Namely, $\ell=aebec \in \mL_1$
and $W_{\ell}=  \Phi_{x}^*Q_{a}Q_{e}Q_{b}Q_{e}Q_{c}\Phi_{y}$.
By \eqref{e:martingale1}, 
\begin{equs}[e:CI1]	
\cI_\ell^{ij}
&=
\text{Tr}(Q_{a}\,  \dif \mathcal M_e \,Q_{b} \, \dif \mathcal M_e \, Q_{c}\Phi_{y}  \Phi_{x}^*)/\dif t
\\
&=2\mu \Tr \big( Q_{b^{-1}}  Q_{c}\Phi_{y} \Phi_{x}^*Q_{a}  \big )-2\lambda \Tr\big(Q_e Q_b )\Tr(Q_{a}Q_{e}Q_{c}\Phi_{y} \Phi_{x}^* \big)
\\
&=2\mu W_{ab^{-1}c}-2 \lambda W_{eb}W_{aec}
\end{equs}
where $ab^{-1}c$ is a negative twisting (see \eqref{e:twisting1}) and $(eb, aec)$ is a positive splitting (see \eqref{e:splitting1}).
Consider the case $e_i=e_j^{-1}=e\in E$. 
Namely $\ell=aebe^{-1}c \in \mL_1$.
By \eqref{e:martingale2}, 
\begin{equs}[e:CI2]
\cI_\ell^{ij}
&=
\text{Tr}(Q_{a}\,  \dif \mathcal M_e \,Q_{b} \, \dif \mathcal M_{e^{-1}} \, Q_{c}\Phi_{y}  \Phi_{x}^*)/\dif t
\\
&=-2\mu \Tr (Q_{a}Q_{e}Q_{b^{-1}}Q_{e^{-1}}Q_{c}\Phi_{y} +2\lambda\Tr (Q_{b})\Tr(Q_{a}Q_{c}\Phi_{y}  \Phi_{x}^*)
 \Phi_{x}^*)
\\
&= -2\mu W_{aeb^{-1}e^{-1}c}+2\lambda W_{b}W_{ac}
\end{equs}
where $aeb^{-1}e^{-1}c$ is a positive twisting \eqref{e:twisting2} and $(b,ac)$ is a negative splitting \eqref{e:splitting2}.  

Thus, recalling Remark~\ref{rem:double-counting},
$$
\sum_{i<j} \cI_\ell^{ij}
= \mp\lambda
 \sum_{s \in \mS_e(\ell) } W_s
\mp\mu\sum_{s \in \mT_e(\ell) } W_s
\;.
$$
	The result for $G=SO(N), U(N)$ then follows by 
	taking expectations w.r.t. the conditional law $\mu_{Q_{\backslash e},\Phi}$ and then w.r.t. $\nu_e$ from \eqref{e:LMHY-com} on \eqref{e:Ito-general}, 
 and multiplying $2$ on both sides.
 
The case $G=SU(N)$ introduces an additional drift term arising from the trace components in $\nabla_e\cS_1$ and $\nabla_e\cS_2$. 
 This yields the expansion terms in third line on the right-hand side of equation \eqref{general-e}, 
exactly as in Lemma~\ref{lem:simple-e}. 
Regarding the It\^o correction terms, 
note that in \eqref{e:martingale1} and \eqref{e:martingale1} in Lemma~\ref{lem:DM}
the  terms with coefficients $\nu$ are just the LHS with $\dif \mathcal M_e$ replaced by $Q_e$,
so in other words these are the same string observable $W_\ell$ that we start with.
Namely, we add $2\nu W_\ell$ in \eqref{e:CI1},
 and  $-2\nu W_\ell$ in \eqref{e:CI2};
to sum up these terms, we need to count the number of times $e_i=e_j$
and the number of times $e_i=e_j^{-1}$.
This is counted 
 in \cite{SSZloop} (the end of Step 1 in Proof of Theorem~1.1 therein),
except that here we do not sum over $e$,
and the expression written as $|A(e)|+|B(e)|-(|A(e)|-|B(e)|)^2$ therein
is precisely equal to $r-t_\ell(e)^2$ here. 
\end{proof}

Next, we consider a scenario  where a line $\ell \in \mL_{1}$ has 
both its beginning point and ending point equal to $x\in \Lambda$ (see  Figure~\ref{fig:Gluing-line-into-loop}).
In the result below, we recall from Section~\ref{sec:Remarks} that 
the sets with subscript $x$ take into account 
the operations that happen at both the beginning point and the ending point, see 
\eqref{e:Exp-union}.

\begin{lemma}\label{lem:general-x}
For any line $\ell \in \mL_{1}$ with both beginning and ending points $x \in \Lambda$,
\begin{align*}
	 -2c_0  \phi(\ell) 
	&=\kappa \sum_{\ell' \in \mE_{\text{ext},x}(\ell) }  \phi(\ell') 
	+\sum_{k=1}^n c_k
		\sum_{s \in \mE^k_{x}(\ell)}  \phi(s)
	+2q\sum_{\ell' \in \mG_x(\ell) }  \phi(\ell')\qquad (M\in \{\mR^N,\mC^N\})
\\
	2qN \phi(\ell) 
	&=\kappa \sum_{\ell' \in \mE_{\text{ext},x}(\ell) }  \phi(\ell')
	 -\kappa \sum_{s \in \mE_{x}(\ell) }  \phi(s)
	 +2q\sum_{\ell' \in \mG_x(\ell) }  \phi(\ell')\qquad\quad(M\in \{\mS^{N-1},\mS^{2N-1}\}).
\end{align*}
Here  $q=1$ in the real case and $q=2$ in the complex case as in Lemma~\ref{lem:DMx}.
\end{lemma}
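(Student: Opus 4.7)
The plan is to proceed exactly as in Lemma~\ref{lem:simple-x}, applying It\^o's formula to $W_\ell = \Phi_x^* Q_\ell \Phi_x$ along the conditional dynamic \eqref{SDE1} for $\Phi_x$ and taking expectation at stationarity. The essential new feature is that $\Phi_x$ now appears \emph{twice} in $W_\ell$ (once as $\Phi_x^*$ on the left, once as $\Phi_x$ on the right), so differentiation produces two drift contributions plus a genuinely new It\^o cross-term $\dif \Phi_x^* \, Q_\ell \, \dif \Phi_x$.

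For the two drift pieces, each endpoint produces exactly the terms computed in the proof of Lemma~\ref{lem:simple-x}. Since the sets $\mE_{\text{ext},x}(\ell)$, $\mE_x(\ell)$ and $\mE^k_x(\ell)$ are defined as the disjoint union of operations performed at $u(\ell)=x$ and at $v(\ell)=x$ (see \eqref{e:Exp-union}), summing the two endpoint contributions yields precisely the sums appearing in the statement. In the sphere case, each endpoint further contributes an $-c_M \Phi_x$ drift term (from \eqref{e:frakB2}), producing $-2c_M W_\ell$ in total. In the flat case, each endpoint contributes $\nabla_x \cV$, giving $2c_0 W_\ell + 2\sum_{k=1}^n c_k \sum_{s\in\mE_x^k(\ell)} W_s$; the factor of $2$ from the two endpoints is absorbed into the summation convention \eqref{e:Exp-union}, so one obtains the sums in the statement (not twice the sums).

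The new ingredient is the It\^o cross-term, which by Lemma~\ref{lem:DMx} with $P = Q_\ell$ gives
\begin{equation*}
\dif \Phi_x^* \, Q_\ell \, \dif \Phi_x = \dif \mathcal M_x^* \, Q_\ell \, \dif \mathcal M_x = \bigl(2q\, \Tr(Q_\ell) - \1_{\mS}\, 2\, \Phi_x^* Q_\ell \Phi_x\bigr)\dif t.
\end{equation*}
Since $u(\ell)=v(\ell)=x$, the trace $\Tr(Q_\ell)$ is exactly the Wilson loop observable $W_{\bigtriangleup \ell}$ obtained by gluing $\ell$ into a loop (Section~\ref{sec:Gluing}), so this contributes $2q\sum_{\ell'\in \mG_x(\ell)} \phi(\ell')$ to the equation; the term $-2\1_{\mS} W_\ell$ will combine with the $-2c_M W_\ell$ drift.

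Putting everything together and moving the $W_\ell$-proportional terms to the left: in the flat case there is no $c_M$ and no $\1_\mS$ correction, yielding $-2c_0\,\phi(\ell)$ on the left and the stated RHS. In the sphere case, $\nabla_x \cV = 0$ but the coefficient of $W_\ell$ becomes $2(c_M + 1)$, and using \eqref{e:c_S} together with $q=1$ for $\mS^{N-1}$ and $q=2$ for $\mS^{2N-1}$ one checks $2(c_M+1) = 2qN$ in both cases, matching the stated left-hand side. The main obstacle is essentially a bookkeeping one: verifying the clean arithmetic cancellation $c_M + 1 = qN$ across both real and complex spheres, and checking that the doubled endpoint contributions to the $\nabla_x\cS_2$ and $\nabla_x\cV$ drifts are correctly absorbed into the disjoint-union conventions for $\mE_{\text{ext},x}$, $\mE_x$, $\mE_x^k$, so that no spurious factor of $2$ appears in the $\kappa$- and $c_k$-terms.
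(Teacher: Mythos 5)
Your proposal is correct and follows essentially the same route as the paper's own proof: It\^o's formula for $W_\ell=\Phi_x^*Q_\ell\Phi_x$ along the conditional dynamic \eqref{SDE1}, the two endpoint drift contributions treated exactly as in Lemma~\ref{lem:simple-x}, the cross-variation $\dif\Phi_x^*\,Q_\ell\,\dif\Phi_x$ evaluated via \eqref{e:martin-x1} with $P=Q_\ell$ so that $2q\Tr(Q_\ell)$ produces the gluing term and $-\1_{\mS}2W_\ell$ combines with the $-2c_MW_\ell$ drift, with $2(c_M+1)=2qN$. The only point you leave as a bookkeeping check (how the doubled endpoint contributions to the $\nabla_x\cS_2$-expansion and $c_k$-terms interact with the disjoint-union convention of \eqref{e:Exp-union}) is treated at exactly the same level of detail in the paper, which simply invokes ``exactly as in Lemma~\ref{lem:simple-x}''.
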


\begin{proof}
	Similarly as \eqref{e:Ito-x}, with $x=y$
	\begin{equs}
		\dif W_{\ell}
		&=\dif \Phi_{x}^{*} Q_{\ell}\Phi_{y} 
		+\Phi_{x}^{*} Q_{\ell} \dif \Phi_{y} 
		+\dif \Phi_{x}^{*} Q_{\ell} \dif\Phi_{y} 
		\\
		&= \mbox{r.h.s. of \eqref{e:Ito-x} for $x$}
		+\mbox{r.h.s. of \eqref{e:Ito-x} for $y$}
		+ \dif \Phi_{x}^{*} Q_{\ell} \dif\Phi_{y}   
	\end{equs}
	where ``r.h.s. of \eqref{e:Ito-x} for $y$'' means r.h.s. of \eqref{e:Ito-x} with exchange of $x,y$ in the obvious way, and
	the last term 
	is the It\^o correction term.
	
	Applying Lemma~\ref{lem:DMx}, the drift terms give us the extensions $\mE_{\text{ext},x}(\ell)$ and expansions by null-lines $ \mE^k_{x}(\ell)$,
	and a term $-2c_M W_\ell$ (in the cases of spheres), exactly as in Lemma~\ref{lem:simple-x}.
	For the It\^o correction term, by \eqref{e:martin-x1},
	\begin{align*}
		\dif \Phi_{x}^{*}Q_{\ell}\dif\Phi_{x }
		=2q \tr(Q_\ell) -\1_{\mS}2 \Phi_x^*Q_\ell \Phi_x
		=2q \sum_{\ell' \in \mG_x(\ell) }  W_{\ell'}
		- \1_{\mS}\, 2 W_\ell\;.
	\end{align*}
	Here we note that $\tr Q_\ell$ is a Wilson loop, which is  gluing of the line $\ell$. 
	It turns a ``closed line'' $\ell$ where the beginning and ending points coincide
	into a loop $\ell$:
	$$
		\mL_1 \ni \ell \mapsto \ell' \in \mL_0.
	$$
	The last  term $- 2 W_\ell$ combined with  the term $-2c_M W_\ell$ above from the drifts leads to the coefficient $2qN$ on the LHS.
\end{proof}

\subsection{Collections of strings} \label{sec:collections}
Unfortunately, the loop equation is not closed within the class of observables associated to single strings.  Indeed, already from considering simple strings, the breaking operation sends a single string to a pair of strings, making it necessary to consider the expectations of products of traces.  Hence, to obtain such a closure, we ought to consider collections of strings.  

To this end, let $s$ be an ordered collection of strings, that is $s=(\ell_{1},\cdots, \ell_{k})$ for some $k \in \N$, where each $\ell_{i}\in \mL$ is a string.   

Fix an edge $e$ where operations will take place. Denote by $r(\ell)$ the total number of instances of $e$ and $e^{-1}$ within the string $\ell$.  
Furthermore, define 
$$
r(s)=\sum_{ i=1}^{k}r(\ell_{i}) \qquad \mbox{and} \qquad  
t(e)\eqdef \sum_{i=1}^kt_{\ell_i}(e)
$$
where as in the last subsection
$t_\ell(e)$ is the number of occurrences of $e$ minus the number of occurrences of $e^{-1}$ 
in the loop $\ell$.

We can now state the main result of the paper.
Recall again that depending on the Lie group, some of the parameters $\lambda,\mu, \nu$ may be $0$.

\begin{theorem}\label{theo:e-final}
	Let $G\in \{SO(N),U(N), SU(N)\}$. Fix an edge $e$ and let $s$ be a collection of strings.  The following identity holds 
	\begin{equs}[e15]
		{}&\Big(-2 \,r(s)\,c_\mfg+2\nu \big(r(s)-t(e)^2\big)\Big) \phi(s)
		\\	=
		&\mp\beta \sum_{s' \in \mD^\pm_e(s) } \phi(s')
		\mp\kappa \sum_{s' \in \mB^\pm_e(s) } \phi(s')\mp 2\mu \sum_{s' \in \mT^\pm_e(s) } \phi(s')
		\mp 2\lambda\sum_{s' \in \mS^\pm_e(s) } \phi(s')
		\\
		&\mp 2\lambda \sum_{s' \in \mM^\pm_{e,U}(s) } \phi(s')
		\mp 2\mu \sum_{s' \in \mM^\pm_{e,\backslash U}(s) } \phi(s')\mp 2\lambda \sum_{s' \in \mX^\pm_{e,U}(s) } \phi(s')
		\mp 2\mu \sum_{s' \in \mX^\pm_{e,\backslash U}(s) } \phi(s') \\
		&\mp	\beta\nu \sum_{s' \in \mE^\pm_{e,\square}(s) } \phi(s')
		\mp\kappa\nu \sum_{s' \in \mE^\pm_{e,\parallel}(s) } \phi(s').
	\end{equs}
	Here $c_\mfg$ is as in \eqref{e:c_g} and 
	$\lambda,\mu, \nu$ in as in \eqref{e:lambda-mu-nu}.
\end{theorem}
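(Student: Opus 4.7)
The plan is to extend the dynamical derivation used in Lemma~\ref{lem:general-e} from a single string $\ell$ to a product $W_s=\prod_{i=1}^k W_{\ell_i}$, by applying It\^o's formula to the conditional SDE \eqref{SDE} for $Q_e$ (holding all other $Q$'s and all $\Phi$'s fixed). Writing $Z_i\eqdef W_{\ell_i}$ and $\dif Z_i = \cA_i\,\dif t + \dif N_i$ for a drift $\cA_i$ and a martingale $N_i$, It\^o's product rule yields
\begin{equs}
\dif W_s
= \sum_{i} \Big(\prod_{j\neq i} Z_j\Big)\dif Z_i
+ \sum_{i<j} \Big(\prod_{\ell\neq i,j} Z_\ell\Big)\,\dif N_i\, \dif N_j\;.
\end{equs}
After inserting Lemma~\ref{lem:DM}, taking expectation (which kills $\dif W_s$ and the martingale parts because the conditional measure $\mu_{Q_{\backslash e},\Phi}$ is invariant for \eqref{SDE}), and finally averaging over $\nu_e$ via \eqref{mea:dis}, the entire identity reduces to identifying the drift and covariation terms with the operations listed in the theorem.

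The drift part $\sum_i(\prod_{j\neq i}Z_j)\cA_i$ contributes exactly the single-string right-hand side of Lemma~\ref{lem:general-e} for each $\ell_i$ (with the factors of the other $Z_j$'s interpreted as the remaining strings of $s$ unchanged). Summing over $i$ produces the deformation, breaking, expansion, and within-string splitting/twisting contributions at $e$, together with the constant factor $-2r(s)c_\mfg$ on the left-hand side and the $SU(N)$ trace correction $2\nu\sum_i(r(\ell_i)-t_{\ell_i}(e)^2)$. This recovers all but the merger and switching terms in \eqref{e15}, as well as the diagonal part of the $t(e)^2$ expansion.

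For the new contributions, fix $i<j$ and expand $\dif N_i\,\dif N_j$. Each string $\ell_i$ contains $e$ or $e^{-1}$ in certain locations, and its martingale part is a sum of terms of the form $\Tr(\dif\cM_{e_\alpha}\,P_\alpha)$ (if $\ell_i\in\mL_0$) or $\Phi^*_\cdot Q_a\,\dif\cM_{e_\alpha}\,Q_b\Phi_\cdot = \Tr(\dif\cM_{e_\alpha}\,Q_b\Phi_\cdot\Phi^*_\cdot Q_a)$ (if $\ell_i\in\mL_1$), where $e_\alpha\in\{e,e^{-1}\}$. Hence $\dif N_i\,\dif N_j$ is a sum of products $\Tr(\dif\cM_{e_\alpha}P)\Tr(\dif\cM_{e_\beta}R)$ for suitable adapted $P,R$, and Lemma~\ref{lem:DM} identities \eqref{e:martingale3}--\eqref{e:martingale4} apply. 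The $\lambda$ terms combine (after cyclic permutation and, in the line case, reassembling the scalars into a $\Phi^*\cdots\Phi$ form) into precisely the $U$-mergers $\mM^\pm_{e,U}(s)$ for loop-loop or loop-line pairings, and into $\mX^\pm_{e,U}(s)$ for line-line pairings, matching the geometric definitions \eqref{e:merger00-1}--\eqref{e:merger00-2}, \eqref{e:merger01-1}--\eqref{e:merger01-2}, \eqref{e:switching1}--\eqref{e:switching2}. The $\mu$ terms (nonzero only for $SO(N)$, where $Q_e^t=Q_e^{-1}$ allows one to rewrite transposes as reversed edges) produce the complementary sets $\mM^\pm_{e,\backslash U}(s)$ and $\mX^\pm_{e,\backslash U}(s)$. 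The $\nu$ terms (nonzero only for $SU(N)$) contribute $2\nu \cdot 2t_{\ell_i}(e)t_{\ell_j}(e)\,W_s$ per pair $(i,j)$, which combines with the diagonal $\sum_i t_{\ell_i}(e)^2$ from the drift step into the full $t(e)^2=\bigl(\sum_i t_{\ell_i}(e)\bigr)^2$ on the left-hand side.

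The main obstacle will be the bookkeeping in the cross-string step: correctly matching each of the four magic-formula terms in \eqref{e:martingale3}--\eqref{e:martingale4} to the precise positive/negative and $U$/$\backslash U$ operation as defined in Section~\ref{sec:Operations}, including the sign convention $\mp$ that associates positive operations ($e$ or $e^{-1}$ appearing twice in the source) with the negative-coefficient contributions from $\Tr(Q_e P Q_e R)$ in \eqref{e:martingale3}, and negative operations with $\Tr(PR)$ from \eqref{e:martingale4}. Once this correspondence is verified edge-by-location for each of the four pair types (loop-loop, loop-line, line-loop, line-line) and for each of the two cases $e_\alpha=e_\beta$ versus $e_\alpha=e_\beta^{-1}$, assembling all terms yields \eqref{e15}.
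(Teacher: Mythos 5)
Your proposal is correct and follows essentially the same route as the paper's proof: It\^o's product formula for $W_s$ under the conditional dynamic at $e$, with the drift (including within-string corrections) reproducing the single-string terms of Lemma~\ref{lem:general-e}, the cross-covariations $\dif N_i\,\dif N_j$ handled via \eqref{e:martingale3}--\eqref{e:martingale4} to produce the $U$/$\backslash U$ mergers and switchings with the stated $\lambda$/$\mu$ split, and the $\nu$ cross terms $t_{\ell_i}(e)t_{\ell_j}(e)$ completing $t(e)^2$. The ``bookkeeping'' you defer is precisely the explicit case-by-case identification (loop/line mergers and switchings, $e_1=e_2$ versus $e_1=e_2^{-1}$) carried out in the paper, so no new idea is missing.
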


\begin{remark}
	It seems plausible that the above form of the master loop equation could be strengthened with some additional work.  For the pure Yang-Mills model, there are two stronger forms of the master loop equation in the literature.  The first is the `unsymmetrized master loop equation' in \cite{Cha,Jafar,OmarRon}, where instead of $|r|$ on the LHS one has $r_{1}$ and on the RHS, the splitting, twisting, and deformation operations are performed only within $\ell_{1}$, while the mergers involve only $\ell_{1}$ and $\tilde{\ell} \neq \ell_{1}$ (rather than all possible pairs $\ell \neq \tilde{\ell} $ as above).  It seems plausible to us that \eqref{e15} could also be strengthened in a similar way using the integration by parts approaches in \cite{Cha,Jafar,OmarRon}.
	
	\medskip
	
	The second variant takes an even stronger form, the so-called `single location master loop equation', was obtained for the pure Yang-Mills model in \cite{CPS2023}.  We have not yet fully investigated this direction, but it is not too difficult to see that extending this to the YMH setting can be reduced to a problem related to the ``word recursion" for the Haar measure (by the same method as in the proof of Theorem 5.7 in \cite{CPS2023}).  Namely, one would need to show that Proposition 6.102 from \cite{CPS2023} extends to string observables, where by conditioning it suffices to consider $\Phi$ as deterministic.  This reduction is a simple consequence of Taylor expanding the exponential in the action to write each YMH observable as a large sum of product Haar measure observables (for which the corresponding loop equation only involves splitting, twisting, and merging), then using the fact that deformation and breaking are actually mergers with a plaquette or a line with a single edge, respectively.
	
	\medskip
	
	Finally, note that although \eqref{e15} is most likely not the strongest form of loop equation satisfied by the invariant measure \eqref{e:LYMH}, our argument applies equally well to the unconditional stochastic dynamic without assuming stationarity in time, the only difference being that the expected value of the time differential should be included and the equation should be stated in what \cite{Cha} refers to as the `symmetric master loop equation', as in \cite{SSZloop}.  
\end{remark}
\begin{proof}
	For each string component $\ell$ of the collection $s$,
	we have seen in Lemma~\ref{lem:general-e} that
	applying It\^o's formula to the conditional SDE \eqref{SDE} gives us
	\begin{equ}[e:one-component]
		\dif W_{\ell}-r(\ell)c_\mfg W_{\ell}\dif t =(\mathcal{D}_{\ell}+I_{\ell})\dif t+\dif M_{\ell}
	\end{equ}
	for a suitable drift term $\mathcal{D}_{\ell}$, It\^{o} correction $I_{\ell}$ (from repeated occurrences of $e$ or $e^{-1}$ within $\ell$), and martingale $M_{\ell}$.  
	Also, as we have seen  in Lemma~\ref{lem:general-e}, for a single string $\ell$,
	after taking expectations and multiplying by $2$, 
	$\mathcal{D}_{\ell}$ leads to the deformation, breaking, and expansion terms,
	while $I_{\ell}$ leads to the twisting and splitting terms.
	
	Now for a collection $s$, 
	we follow the same structure as the corresponding proof in \cite{SSZloop} by applying It\^{o}'s formula to $W_{s}$ and taking expectation w.r.t. the stationary dynamics for the conditional probability $\mu_{Q_{\backslash e},\Phi}$ and then w.r.t. $\nu_e$ from \eqref{mea:dis} on both sides to derive \eqref{e15}. 
	
	Differentiating $W_s$, we have
	$$
	\dif W_s = \sum_{i=1}^k W_{\ell_1} \cdots \dif W_{\ell_i} \cdots W_{\ell_k}
	+ \sum_{i<j} 
	W_{\ell_1} \cdots \dif W_{\ell_i} \cdots  \dif W_{\ell_j}\cdots W_{\ell_k}
	$$
	and for each $ \dif W_{\ell_i}$ in the first term we substitute \eqref{e:one-component}.
	After  taking expectation,
	the first term then gives us 
	all the deformation, breaking, expansion, twisting and splitting terms, and the term 
	$-2\, r(s)\,c_\mfg \phi(s)$ on the LHS.
	
	In comparison to the previous subsections, the only terms which need to be explained are the mergers and switchings, which will be a result of the It\^{o} corrections to the product rule for $\dif W_s $ above, namely
	\begin{equation}\label{prod:cor}
		\frac{1}{2}\sum_{\ell \neq \tilde{\ell}} \E \big (  \dif M_{\ell}\dif M_{\tilde{\ell}}\prod_{\ell' \notin \{\ell, \tilde{\ell}\} }W_{\ell'} \big ).
	\end{equation}
	
	We now compute $ \dif M_{\ell}\dif M_{\tilde{\ell}}$.  The cases to consider are as follows: $\ell$ and $\tilde{\ell}$ are both loops, $\ell$ and $\tilde{\ell}$ are both lines, and finally that one is a line while the other is a loop.  If both are loops, the same calculation as in \cite{SSZloop} leads to mergers of loops.  We now turn to the other two cases.  We first consider $SO(N)$ and $U(N)$ where $\nu=0$.
	
	\medskip
	
	\textbf{Loop/line mergers:} If  $\ell \in \mL_{1}$ and $\tilde{\ell} \in \mL_{0}$, we may write
	$\dif M_{\ell}\dif M_{\tilde{\ell}}$ as a sum of terms like
	\begin{equation}
		\big( \Phi_{x}^{*}Q_{a} \dif \mathcal M_{e_1}Q_{b}\Phi_{y}\big)\,
		\Tr(\dif \mathcal M_{e_2}Q_c ),  \label{e23} 
	\end{equation}
	where $e_1,e_2 \in \{e,e^{-1}\}$ and $\mathcal M$ is as in Lemma~\ref{lem:DM}.  The summation runs over all possible locations where either $e$ or $e^{-1}$ occur.  If $e_1=e_2=e$, then by \eqref{e:martingale3} 
	\begin{equs}
		\eqref{e23}
		&=
		\Tr\big( \dif\mathcal M_e Q_{b}\Phi_{y} \Phi_{x}^{*}Q_{a}\big)\,
		\Tr(\dif\mathcal M_e Q_c )
		\\
		&=
		2\Big(
		\mu \text{Tr} ( Q_{b}\Phi_{y} \Phi_{x}^{*}Q_{a}\, Q_{c^{-1}})-\lambda\text{Tr}(Q_e \, Q_{b}\Phi_{y} \Phi_{x}^{*}Q_{a} \, Q_e\, Q_c) \Big)\dif t	 
		\\
		&=
		\Big(2\mu W_{ac^{-1}b}-2\lambda W_{aeceb} \Big)\,\dif t
		\\
		&=
		\Big(2\mu W_{\ell \ominus \tilde{\ell}}-2\lambda W_{\ell \oplus \tilde{\ell}}\,\Big)\,\dif t
	\end{equs}
	$ac^{-1}b\in \mL_1$ is a negative merger in $\mM^{-}_{e,\backslash U}(s)$,
	see \eqref{e:merger01-1} and $aeceb \in \mL_1$ is a positive merger in $\mM^{+}_{e,U}(s)$.

	If $e_1=e_2^{-1}=e$, then by \eqref{e:martingale4} 
	\begin{equs}
		\eqref{e23}
		&=
		\Tr\big( \dif\mathcal M_e Q_{b}\Phi_{y} \Phi_{x}^{*}Q_{a}\big)\,
		\Tr(\dif\mathcal M_{e^{-1}} Q_c )
		\\
		&=
		-2\Big(\mu \text{Tr} (Q_e\, Q_{b}\Phi_{y} \Phi_{x}^{*}Q_{a}\,Q_e\, Q_{c^{-1}})
		-\lambda\text{Tr}(Q_{b}\Phi_{y} \Phi_{x}^{*}Q_{a} \, Q_c)\Big)\dif t	 
		\\
		&=
		-\Big( 2\mu W_{aec^{-1}eb}-2\lambda W_{acb} \Big)\,\dif t
		\\
		&=
		-\Big(2\mu W_{\ell \oplus \tilde{\ell}}-2\lambda W_{\ell \ominus \tilde{\ell}}\,\Big)\,\dif t
	\end{equs}
	where $aec^{-1}eb\in \mL_1$ is a positive merger in $\mM^{+}_{e,\backslash U}(s)$,
	see \eqref{e:merger01-2} and $acb \in \mL_1$ is a negative merger in $\mM^{-}_{e,U}(s)$.
	
	\medskip
	
	\textbf{Switching:}
	If  both $\ell$ and $\tilde{\ell}$ both belong to $\mL_{1}$, we may write $dM_{\ell}dM_{\tilde{\ell}}$ as a sums of terms
	\begin{equ}[e24]
		\big ( \Phi_{x}^* Q_{a} \dif \mathcal M_{e_1}Q_{b}\Phi_{y} \big ) 
		\big (\Phi_{z}^* Q_{c} \dif \mathcal M_{e_2}Q_{d}\Phi_{w} \big ),
	\end{equ}
	where $e_1,e_2 \in \{e,e^{-1}\}$.  As above, the summation runs over all possible locations where either $e$ or $e^{-1}$ occur.
	
	If $e_1=e_2=e$, then by \eqref{e:martingale3} 
	\begin{equs}
		\eqref{e24}
		&=
		\Tr\big( \dif\mathcal M_e Q_{b}\Phi_{y} \Phi_{x}^{*}Q_{a}\big)\,
		\Tr\big(\dif\mathcal M_e Q_{d}\Phi_{w} \Phi_{z}^{*}Q_{c} \big)
		\\
		&=
		2\Big(
		\mu \text{Tr} \Big( Q_{b}\Phi_{y} \Phi_{x}^{*}Q_{a}\cdot
		Q_{c^{-1}} \Phi_z \Phi_w^* Q_{d^{-1}}
		\Big)-\lambda\text{Tr}\Big(Q_e \cdot Q_{b}\Phi_{y} \Phi_{x}^{*}Q_{a} \cdot 
		Q_e \cdot Q_{d}\Phi_{w} \Phi_{z}^{*}Q_{c} \Big) \Big)\dif t	 
		\\
		&=
		2\Big(\mu
		\big (\Phi_x^* Q_a Q_{c^{-1}} \Phi_z \big )
		\big ( \Phi_w^* Q_{d^{-1}}Q_{b}  \Phi_y \big )-\lambda
		\big ( \Phi_{z}^* Q_{c}Q_{e}Q_{b} \Phi_{y} \big ) 
		\big ( \Phi_{x}^* Q_{a} Q_{e}Q_{d}\Phi_{w} \big )
		\Big)\dif t	
		\\
		&=
		\Big(2\mu W_{ac^{-1}}W_{d^{-1}b}-2\lambda W_{ceb}W_{aed}+ \Big)\,\dif t
	\end{equs}
	where $(ac^{-1}, d^{-1}b)$ is a negative switching in $\mX^{-}_{e,\backslash U}(s)$,
	see \eqref{e:switching1} and $(ceb,aed)$ is a positive switching in  $\mX^{+}_{e,U}(s)$.
	
	If $e_1=e_2^{-1}=e$, then by \eqref{e:martingale4} 
	\begin{equs}
		\eqref{e24}
		&=
		\Tr\big( \dif\mathcal M_e Q_{b}\Phi_{y} \Phi_{x}^{*}Q_{a}\big)\,
		\Tr\big(\dif\mathcal M_{e^{-1}} Q_{d}\Phi_{w} \Phi_{z}^{*}Q_{c} \big)
		\\
		&=
		2\Big(
		\lambda \Tr\Big(
		Q_{b}\Phi_{y} \Phi_{x}^{*}Q_{a}\cdot Q_{d}\Phi_{w} \Phi_{z}^{*}Q_{c}
		\Big)
		-\mu \Tr \Big(
		Q_e \cdot Q_{b}\Phi_{y} \Phi_{x}^{*}Q_{a}\cdot
		Q_e \cdot Q_{c^{-1}} \Phi_z \Phi_w^* Q_{d^{-1}} 
		\Big)\dif t	 
		\\
		&=
		\Big( 2\lambda W_{ad}W_{cb}-2\mu W_{aec^{-1}}W_{d^{-1}eb} \Big)\,\dif t
	\end{equs}
	where $(ad,cb)$ is a negative switching in $\mX^{-}_{e,U}(s)$ and $(aec^{-1}, d^{-1}eb)$ is a positive switching in $\mX^{+}_{e,\backslash U}(s)$, 
	see \eqref{e:switching2}.
	
	Combining the above calculations and applying similar arguments as in Lemma \ref{lem:general-e} we  obtain all the merger and switching terms in \eqref{e15}. 
	
	For $G=SU(N)$ we have extra expansion terms from the drifts as in Lemma \ref{lem:general-e}. 
	Another  difference is that now $\nu=\frac1N\neq 0$.
	However, note that in all the identities \eqref{e:martingale1}--\eqref{e:martingale4},
	the terms with coefficients $\nu$ are just the LHS with $\dif \mathcal M_e$ replaced by $Q_e$,
	so in other words these are the same string observables that we start with and should be put to the LHS 
	of \eqref{e15}.
	By the same discussion as in the end of Proof of Lemma~\ref{lem:general-e},
	we add either $2\nu W_s$ if $e_1=e_2$ or $-2\nu W_s$ if $e_1=e_2^{-1}$ in the above calculations.
	
	To sum up these $\nu$ terms,
	we also follow the same calculation as in \cite{SSZloop} which shows that the contribution from It\^o correction term with coefficient $\nu$ is given by
	\begin{align*}
		-2\nu \Big(\sum_i r(\ell_i)-\sum_i t_{\ell_i}(e)^2
		-2\sum_{i<j}t_{\ell_i}(e)t_{\ell_j}(e)\Big)\phi(s)=-2\nu \big(r(s)-t(e)^2\big)\phi(s).
	\end{align*}
	Here $r(\ell_i)- t_{\ell_i}(e)^2$ is exactly the same as the single string case Lemma~\ref{lem:general-e}.
	The term $t_{\ell_i}(e)t_{\ell_j}(e)$ is from  \cite{SSZloop}(end of Step 2 in Proof  of Theorem 1.1 therein).
	Hence, \eqref{e15} follows.
\end{proof}

Let $s$ be an ordered collection of strings, that is $s=(\ell_{1},\cdots, \ell_{k})$ for some $k \in \N$, where each $\ell_{i}$ is a string.   Fix a vertex $x$ where operations will take place and denote by $r_x(s)$ the total number of strings in $s$ having $x$ as one of the endpoints.
In the result below, again, we recall from Section~\ref{sec:Remarks} that 
the sets with subscript $x$ 
are disjoint unions.

\begin{figure}[h]
	\begin{tikzpicture}
		\node at (-0.2,-0.2) {$x$};
		\draw[thick,colorline,midarrow] (0,0) -- (3,0);
		\node[dot,colorline] at (0,0) {}; \node[dot,colorline] at (3,0) {};
		\node at (1.2,-0.2) {$\ell_1$};
		\draw[thick,colorline,midarrow] (0,0) -- (0,2);
		\node[dot,colorline] at (0,2) {};
		\node at (-0.2,1.2) {$\ell_2$};
		\draw[thick,colorline,midarrow] (0,0) -- (-1,0) -- (-1,-1) -- (0,-1) -- (0,0);
		\node at (-1.2,-0.5) {$\ell_3$};
		\draw[thick,colorloop,midarrow] (1,1) -- (2,1) -- (2,2) -- (1,2) -- (1,1);
		\node at (2.2,1.5) {$\ell_4$};
	\end{tikzpicture}
	\caption{An example of $s$ consisting of three lines $\ell_1,\ell_2,\ell_3$ and a loop $\ell_4$. We have $r_x(s)=4$.} 
\end{figure}
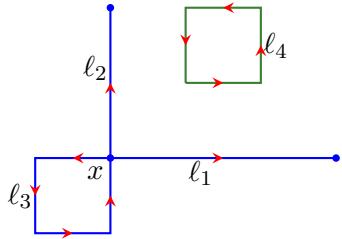

\bt \label{theo:x-final}
Fix a vertex $x$ and let $s$ be a collection of strings. 
For $M\in \{\mR^N,\mC^N\}$,
\begin{align*}
	- &  r_x(s) c_0  \phi(s) \\
	&=
	\kappa \sum_{s' \in \mE_{\text{ext},x}(s) }  \phi(s') 
	+\sum_{k=1}^n c_k
	\sum_{s' \in \mE^k_{\text{exp},x}(s)}  \phi(s')
	+2 q \sum_{s' \in \mG_x(s) }  \phi(s')
	+2 (2-q) \sum_{s' \in \mG_x^{\mR}(s) }  \phi(s')
\end{align*}
and for $M\in \{\mS^{N-1},\mS^{2N-1}\}$,
\begin{align*}
	&[(qN-2)r_x(s)+r_x(s)^2]  \phi(s) 
	\\&=
	\kappa \sum_{s' \in \mE_{\text{ext},x}(s) }  \phi(s') 
	-\kappa \sum_{s' \in \mE_{\text{exp},x}(s) }  \phi(s')
	+2q\sum_{s' \in \mG_x(s) }  \phi(s')
	+2 (2-q) \sum_{s' \in \mG_x^{\mR}(s) }  \phi(s').
\end{align*}
Here as above $q=1$ in the real case and $q=2$ in the complex case. 
\et 

\begin{proof} 
	For each string component $\ell$ of the collection $s$ 
	we have seen in Lemma~\ref{lem:general-x} that 
	applying It\^o's formula to the conditional SDE \eqref{SDE1} gives
	\begin{equation}
		\dif W_{\ell}=(\mathcal{D}_{\ell}+I_{\ell})\dif t+\dif M_{\ell}
	\end{equation}
	for a suitable drift term $\mathcal{D}_{\ell}$, It\^{o} correction $I_{\ell}$ (if the beginning and ending points of $\ell$ coincide), and martingale $M_{\ell}$.  
	Also, as we have seen in 	Lemma~\ref{lem:general-x}, for a single string $\ell$,
	after taking expectation $\mathcal{D}_{\ell}$ leads to the extension and expansion terms (modulo the $c_0$ and $c_M$ terms, precisely treated in Lemma~\ref{lem:general-x}),
	while $I_{\ell}$ results in gluing the coinciding end-points of $\ell$.
	
	Now for a collection $s$, we follow the same structure as the corresponding proof in \cite{SSZloop} by applying It\^{o}'s formula to $W_{s}$ and taking expectation w.r.t. the stationary dynamics for the conditional probability $\mu_{Q,\Phi_{\backslash x}}$ and then w.r.t. $\nu_x$ from \eqref{mea:dis} on both sides to derive the result.

	Similarly  as before,  
	it suffices to
	consider the It\^o corrections that arise from \eqref{prod:cor}. There are four cases.

	(1) $\ell$ begins at $x$ and $\tilde\ell$ ends at $x$.
	Using \eqref{e:martin-x1},
	\begin{equs}
		(\dif \Phi_{x}^{*}Q_{\ell}\Phi_{y} )\,
		(  \Phi_{\tilde y }^{*}Q_{\tilde{\ell} }\dif\Phi_{x })/\dif t
		&=2q \tr (Q_{\ell}\Phi_{y}  \Phi_{\tilde y}^{*}Q_{\tilde{\ell}})
		-  \1_{\mS} \,  2 \Phi_{x}^{*}Q_{\ell}\Phi_{y} \,
		\Phi_{\tilde y }^{*}Q_{\tilde{\ell}}\Phi_{x }
		\\
		& = 2q W_{\tilde\ell \ell} -  \1_{\mS} \, 2W_{\ell}W_{\tilde \ell}
	\end{equs}
	where $\tilde\ell \ell \in \mL_1$ is the gluing of two lines.
	In the second term $(\ell,\tilde\ell)$ is just the original pair of lines.
	
	(2) $\tilde\ell$ begins at $x$ and $\ell$ ends at $x$.
	As in Case (1) this results in 
	$$
	2q W_{\ell \tilde\ell } -  \1_{\mS} \,  2W_{\ell}W_{\tilde \ell}.
	$$
	
	(3) Both $\ell$ and $\tilde\ell$ begin at $x$.
	Using  \eqref{e:martin-x2} with the vector-valued process $R$ there being $Q_{\ell}\Phi_{y}$,
	\begin{equs}
		(\dif \Phi_{x}^{*}Q_{\ell}\Phi_{y})\,
		( \dif  \Phi_{x }^{*}Q_{\tilde{\ell} }\Phi_{\tilde{y} })/\dif t
		&=
		2(2-q) (Q_{\ell}\Phi_{y})^* Q_{\tilde \ell}\Phi_{\tilde y}
		- \1_{\mS} \, 2 (\Phi_{x}^{*}Q_{\ell}\Phi_{y})\,
		( \Phi_{x }^{*}Q_{\tilde{\ell} }\Phi_{\tilde{y} })
		\\
		&= 2(2-q)  W_{\ell^{-1} \tilde\ell}
		-  \1_{\mS} \,  2W_{\ell}W_{\tilde \ell}
	\end{equs} 
	where in the real case
	$\ell^{-1} \tilde\ell\in \mL_1$ is the gluing of two lines,
	and in the complex case the first term is $0$.
	
	(4) Both $\ell$ and $\tilde\ell$ end at $x$, applying \eqref{e:martin-x2}  with $R=Q^{*}_{\tilde{\ell} }\Phi_{x}$ we obtain 
	\begin{align}
		( \Phi_{x}^{*} Q_{\ell}\dif\Phi_{y})\,
		(  \Phi_{x }^{*}Q_{\tilde{\ell} } \dif \Phi_{\tilde{y} })/\dif t&=\Phi_{x}^{*}Q_{\ell} \big (2(2-q)Q^{*}_{\tilde{\ell} }\Phi_{x}-\1_{\mS} 2 \Phi_x (Q^{*}_{\tilde{\ell} }\Phi_{x})^{*} \Phi_x\Big) \nonumber \\
		&=2(2-q)  W_{\ell \tilde\ell^{-1} }
		-  \1_{\mS} \,  2W_{\ell}W_{\tilde \ell}.
	\end{align}
	The proof is finished once we check the coefficient
	$(qN-2)r_x(s)+r_x(s)^2$ on the LHS in the cases of spheres.  Indeed, in all the above cases where gluing happens between two lines, we obtain a term
	$2W_{\ell}W_{\tilde \ell}$, and, 
	recall that in the proof of Lemma~\ref{lem:general-x},
	where gluing happens within one line,
	we also obtain a factor $2$.
	This gives a total coefficient $r_x(s)(r_x(s)-1)$.
	Also, as in the proof of Lemma~\ref{lem:general-x},
	each occurrence of $x$ we obtain
	$c_M=(qN-1)$ from the drift.
	This gives a total coefficient $(qN-1)r_x(s)$.
	Summing up we obtain the result.
\end{proof}


\begin{remark}
One possible generalization of our result  is to
consider more general representations $\sigma: G\to End(\mR^K)$
where $K$ is the dimension of the representation, and the model
\begin{equ}
\mathcal S_{\YMH}(Q,\Phi) 
= 
\beta  \sum_{p\in \CP^+_\Lambda}
 \Re\,\Tr(Q_p)
+\kappa \sum_{e\in E^+_\Lambda }
\Re\, (\Phi_x^* \sigma(Q_e) \Phi_y  )
+ \sum_{z\in \Lambda} V(|\Phi_z|^2)
\end{equ}
where $\Phi$ is $\R^K$ valued.
For example, the adjoint representation of $G$ on $\mfg$ where $K=\dim(G)=\dim(\mfg)$.
Given an open line $l  = e_1 e_2 \cdots e_n$,
it  is  then natural to define  
the Wilson line variable as
$$
W_l := \Phi_{u(e_1)}^* \sigma (Q_{e_1}Q_{e_2}\cdots Q_{e_n})\Phi_{v(e_n)}
=
\Tr_{K\times K}
\Big(\sigma (Q_{e_1}Q_{e_2}\cdots Q_{e_n})\Phi_{v(e_n)}\Phi_{u(e_1)}^* \Big).
$$
Our method should apply to derive the loop equation for this setting.
\end{remark}

\bibliographystyle{alphaabbr}
\bibliography{refs}

\end{document}